\documentclass[a4paper,12pt,reqno]{amsart}
\usepackage{amssymb,amsmath,array,amscd,amsthm,hhline}

\usepackage[mathscr]{euscript}
\usepackage{stmaryrd}
\usepackage{ulem}

\usepackage{tikz-cd}

\usepackage{mathrsfs}

\voffset=-55pt
\hoffset=-1.8cm
\textwidth=467pt
\textheight=695pt
\headsep=35pt

\def\({\left(}
\def\){\right)}

\def\ltimes#1{\mathop{{}_#1\times}}
\def\rtimes#1{{}_#1{\times}\,}

\def\can{\mathop{\mathbf{can}}\nolimits}
\def\n{\mathfrak n}

\def\lm{\lambda}

\def\C{\mathbb C}

\def\R{\mathcal R}

\def\Gr{\mathop{\rm Gr}\nolimits}
\def\dlim{\mathop{\rm lim}\limits_{\longrightarrow}}

\def\<{\langle}
\def\>{\rangle}

\renewcommand\emptyset{\varnothing}
\renewcommand\phi{\varphi}

\def\im{\mathop{\rm im}}

\def\pr{\mathop{\rm pr}\nolimits}
\def\id{\mathrm{id}}
\renewcommand\epsilon{\varepsilon}

\def\A{S}
\def\B{T}
\def\C{F}
\def\suchthat{\mathbin{\rm |}}

\def\ito{\stackrel\sim\to}
\def\b{t}

\def\p{\mathbf p}
\def\u{\mathbf u}
\def\Mat{\mathop{\mathrm{Mat}}\nolimits}
\def\SU{\mathop{\rm SU}\nolimits}
\def\rr{\rrbracket}

\newtheorem{theorem}{Theorem}
\newtheorem{proposition}[theorem]{Proposition}
\newtheorem{lemma}[theorem]{Lemma}
\newtheorem{corollary}[theorem]{Corollary}
\newtheorem{remark}[theorem]{Remark}

\def\Eu{\mathop{\rm Eu}\nolimits}

\def\pt{{\rm pt}}
\def\k{\Bbbk}

\def\le{\leqslant}
\def\ge{\geqslant}

\renewcommand{\labelenumi}{{\rm\theenumi}}
\renewcommand{\theenumi}{{\rm(\arabic{enumi})}}

\def\={\equiv}

\def\BS{\mathrm{BS}}

\def\lq{\mathop{\backslash}}
\def\gld{\mathop{\mathrm{gld}}}

\def\g{\mathbf g}
\def\h{\mathbf h}

\author{Vladimir Shchigolev}
\title{Twisted actions on cohomologies and bimodules}

\begin{document}

\begin{abstract} We introduce a twisted action of the equivariant cohomology of the singleton $H_R^\bullet(\pt,\k)$
on the equivarinat cohomology $H_L^\bullet(X,\k)$ of an $L$-space $X$.
Considering this actions as a right action, $H_L^\bullet(X,\k)$ becomes a bimodule togeather
with the canonical left action of $H_L^\bullet(\pt,\k)$. Using this bimodule structure,
we prove an equivariant version of the K\"unneth isomorphism.

We apply this result to the computation of the equivariant cohomologies of Bott-Samelson varieties
and to a geometric construction of the bimodule morphisms between these cohomologies.

\medskip
\noindent \textbf{Keywords:} equivariant cohomology, fibre bundle, K\"unneth isomorphism, Bott-Samelson variety, diagrams.
\end{abstract}

\maketitle


\bigskip
\bigskip
\medskip
\section{Introduction} Let $L$ be a topological group acting continuously on a topological space $X$.
For any commutative ring $\k$, the $L$-equivariant (sheaf) cohomology $H^\bullet_L(X,\k)$ is naturally
a left $H^\bullet_L(\pt,\k)$-module, where $\pt$ is the singleton trivially acted upon by $L$.
This module structure can explicitly be described as follows:

\vspace{-15pt}

$$
cm=a_X^\star(c)\cup m,
$$

\vspace{2pt}

\noindent
where $c\in H^\bullet_L(\pt,\k)$, $m\in H^\bullet_L(X,\k)$, $a_X:X\to\pt$ is the constant map,
$a_X^\star$ is the equivariant pull-back and $\cup$ denotes the cup product.

The main idea of this paper is to define the structure
of a right $H^\bullet_R(\pt,\k)$-module on $H^\bullet_L(X,\k)$ possibly for $R\ne L$.
Of course, if $L=R$ and we do not have any additional information about $X$,
then we can do it, simply setting
$
md=m \cup a_X^\star(d)
$.
This construction is however very restrictive.
Therefore, we define the right $H^\bullet_R(\pt,\k)$-module structure on $H^\bullet_L(X,\k)$
with the help of an $L$-equivariant map $A:X\to G/R$, which we call the {\it twisting map},
see Section~\ref{Twisted action}. This definition, see~(\ref{eq:i}), assumes that the groups $L$ and $R$
are contained in a larger group $G$, which acts on the total space $E$ of a universal principal $L$-bundle.
Therefore, we define the equivariant cohomology $H^\bullet_L(X,\k)$ not just with the help
of any universal principal $L$-bundle $E\to L\lq E$ but only with the help of such a bundle
that bears a continuous action of $G$. This construction is possible at least in the case where $G$ is a compact Lie group
and $L$ and $R$ are its closed subgroups. We believe that twisting maps are omnipresent,
see, for example, Section~\ref{Basic_examples} and~\ref{Bott-Samelson varieties}.

The main result of the paper is the proof of the following equivariant form of the K\"unneth isomorphism.
Let $X$ and $Y$ be topological spaces and $L,R,P,Q$ be closed subgroups of a compact Lie group $G$
such that $R\subset P$ and $Q\subset P$,
the groups $L$ and $R$ act commutatively and continuously on $X$ on the left and on the right respectively and
$Q$ acts continuously on $Y$. Then we consider the space
$$
X\mathop{\times}\limits_R P\mathop{\times}\limits_QY=X\times P\times Y/\sim,
$$
where $\sim$ is the equivalence relation defined by $(x,p,y)\sim(xr,r^{-1}pq,q^{-1}y)$ for any $r\in R$ and $q\in Q$.
Let also $\alpha:X\to G$ be a continuous map such that $\alpha(lx)=l\alpha(x)$ and $\alpha(xr)=\alpha(x)r$
for any $l\in L$, $x\in X$ and $r\in R$. This map induces the morphism of left $L$-spaces $A:X/R\to G/R$ by
$A(xR)=\alpha(x)R$. Considering $A$ as a twisting map, we get the structure of a right $H_R^\bullet(\pt,\k)$-module
on $H_L^\bullet(X/R,\pt)$ and thus the structure of a right $H_P^\bullet(\pt,\k)$-module
through the natural morphism $H_P^\bullet(\pt,\k)\to H_R^\bullet(\pt,\k)$.
Similarly, the natural morphism $H_P^\bullet(\pt,\k)\to H_Q^\bullet(\pt,\k)$ defines the structure
of a left $H_P^\bullet(\pt,\k)$-module on $H^\bullet_Q(Y,\k)$ by modifying the canonical left module structure.
Then there exists a map
$$
\begin{tikzcd}
H^\bullet_L(X/R,\k)\otimes_{H_P^\bullet(\pt,\k)}H^\bullet_Q(Y,\k)\arrow{r}{\theta}&H_L^\bullet\(X\mathop{\times}\limits_R P\mathop{\times}\limits_Q Y,\k\)
\end{tikzcd}
$$
that is an isomorphism (of bimodules if a twisted right action on $H^\bullet_Q(Y,\k)$ is defined)
under certain restrictions~\ref{restr:-1}--\ref{restr:6}. Note that the importance of this isomorphism
is that it is given by an explicitely constructed map $\theta$, whereas the mere fact
that the cohomologies in both sides are isomorphic follows quite trivially from spectral sequences.
This exact description pays off, when we consider the cohomologies of Bott-Samelson varieties in Section~\ref{Bott-Samelson varieties}
and morphisms between them in Section~\ref{Morphisms}.

We give two basic examples of the applications of the previously developed theory in Section~\ref{Basic_examples}.
The first example concerns the cohomology of the flag variety $G/B$. We reprove
the classical isomorphism
$$
H^\bullet_T(G/B,\k)\cong \R\otimes_{\R^W}\R,
$$
where $B$ is a Borel subgroup of a semisimple complex algebraic group $G$
containing a maximal torus $T$,
$W$ is the Weyl group, $\R=H^\bullet_T(\pt,\k)$ and $\R^W$ is the subrings
of $W$-invariants. Whereas this isomorphism is usually used to introduce the $\R$-$\R$-bimodule 
structure on $T$-equivariant cohomologies of spaces $X$
having $T$-equivariant continuous morphisms $X\to G/B$ (for example, the flag and Bott-Samelson varieties),
we show that it itself is a consequence of the theory developed in
Sections~~\ref{Notation and basic constructions}--\ref{The isomorphism}.
In this paper, we cite this isomorphism only as an example because
our theory of twisted actions allows us to introduce the right actions independently.
The second example concerns the standard bimodules (with twisted right actions)
and we reprove the usual multiplication property of such bimodules.

Our main example concerns Bott-Samelson varieties $\BS(s)$ in Section~\ref{Bott-Samelson varieties}.
Here we use the original definition of these varieties by Bott and Samelson~\cite{Bott_Samelson}. 
Note that $\BS(s)$ is a left $K$-space for the maximal compact torus $K$.
Thus we can consider the $K$-equivariant cohomology $H_K^\bullet(\BS(s),\k)$ for any commutative ring $\k$.
In Theorem~\ref{theorem:3}, we establish the isomorphism of $\R$-$\R$-bimodules
$$
\theta_s:\R\otimes_{\R^{s_1}}\R\otimes_{\R^{s_2}}\otimes\cdots\otimes_{\R^{s_n}}\R\ito H_K^\bullet(\BS(s),\k),
$$
where $s=(s_1,\ldots,s_n)$ and $\R^{s_i}$ denotes the subring of $s_i$-invariants of the $K$-equivariant
cohomology of the point $\R$. Compared with other constructions of similar isomorphisms (for example,~\cite[Theoren 1.6]{WW}),
our construction has the following advantages:

\smallskip

\begin{itemize}\itemsep=5pt
\item the reflections $s_i$ are not necessarily simple;
\item the proof is localization free; 
\item $\theta_s$ is constructed explicitly as a quotient of the composition
      of a pull-back and a K\"unneth isomorphism;
\item the ring of coefficients $\k$ can be any commutative ring of finite global dimension.
\end{itemize}

The isomorphisms $\theta_s$ satisfy the quite expected concatenation property (Theorem~\ref{theorem:4})
and their restrictions to the $K$-fixed points are computed (Theorem~\ref{theorem:5}).

Finally, in Section~\ref{Morphisms}, we consider the morphisms of $\R$-$\R$-bimodules
$H_K^\bullet(\BS(s),\k)\to H_K^\bullet(\BS(t),\k)$. It is well known~\cite{Libedinsky}
that all these morphisms are generated (as linear combinations) by morphisms described by planar diagrams~\cite{EW}.
We will assume here that such diagrams do not have horizontal tangent lines and that no two vertices have
the same $y$-coordinate (Soergel graphs in the terminology of~\cite{EW}). We can cut any such diagram
by horizontal lines to strips containing only one vertex. A typical picture looks like this

\vspace{10pt}

\begin{center}
\begin{tikzpicture}
\draw[dashed] (-1.5,1.25) -- (2.9,1.25);
\draw[dashed] (-1.5,-1.5) -- (2.9,-1.5);
\draw[dashed] (-1.5,0.75) -- (2.9,0.75);
\draw[dashed] (-1.5,0.25) -- (2.9,0.25);
\draw[dashed] (-1.5,-0.2) -- (2.9,-0.2);
\draw[dashed] (-1.5,-0.575) -- (2.9,-0.575);
\draw[dashed] (-1.5,-1) -- (2.9,-1);
\draw[red] (0,0) -- (0,0.5);
\draw [red] plot [smooth,tension=1.2] coordinates {(0.7,-0.75) (0.5,-1.125) (0.4,-1.5)};
\draw[red,fill=red] (0,0.5) circle(0.06);
\draw[blue] (0,0) -- (0,-1.5);
\draw [red] plot [smooth,tension=1.2] coordinates {(0,0) (-0.5,-0.5) (-0.75,-1.25)};
\draw[red,fill=red] (-0.75,-1.25) circle(0.06);
\draw [red] plot [smooth,tension=1.2] coordinates {(0,0) (0.7,-0.75) (1,-1.5)};
\draw [blue] plot [smooth,tension=1.2] coordinates {(0,0) (0.5,0.5) (0.75,1)};
\draw[blue,fill=blue] (0.75,1) circle(0.06);
\draw [green] plot [smooth,tension=1.2] coordinates {(1.6,-1.5) (1.73,-0.95) (2,-0.4)};
\draw [green] plot [smooth,tension=1.2] coordinates {(2.4,-1.5) (2.27,-0.95) (2,-0.4)};
\draw [green] (2,-0.4) -- (2,1.25);
\draw [blue] plot [smooth,tension=1.2] coordinates {(0,0) (-0.7,0.75) (-1,1.25)};
\end{tikzpicture}
\end{center}
We find a geometric description for any such horizontal strip
as a pull-back or a push-forward or the composition of both.
The first two cases are considered and computed in coordinates in Sections~\ref{One-color morphisms}
and~\ref{Horizontal_extensions} and the compositions are considered in
Section~\ref{Two-color morphisms} and are extended horizontally in Section~\ref{Horizontal extensions of two-color morphisms}.
One can see that the compositions are necessary only
for strips containing two-color vertices. Such strips can be further cut to the upper and the lower
parts, which are given by pull-backs and push-forwards, respectively. For example, the diagram above
receives an additional cut. The corresponding strip turns into the following diagram:

\vspace{7pt}

\begin{center}
\begin{tikzpicture}[y=35pt]
\draw[dashed] (-1.5,0.25) -- (2.9,0.25);
\draw[dashed] (-1.5,-0.2) -- (2.9,-0.2);
\draw[dashed] (-1.5,0) -- (2.9,0);
\draw[red] (0,0) -- (0,0.25);
\draw [red] plot [smooth,tension=1] coordinates {(0,0) (-0.165,-0.1) (-0.275,-0.2)};
\draw [blue] plot [smooth,tension=1] coordinates {(0,0) (0.155,0.125) (0.287,0.25)};
\draw [blue] plot [smooth,tension=1] coordinates {(0,0) (-0.135,0.125) (-0.255,0.25)};
\draw [red] plot [smooth,tension=1] coordinates {(0,0) (0.13,-0.1) (0.23,-0.2)};
\draw[blue] (0,0) -- (0,-0.2);

\draw [green] (2,-0.2) -- (2,0.25);
\end{tikzpicture}
\end{center}

\vspace{2pt}

\noindent
This picture shows that we need to consider varieties more general than Bott-Sammelson varieties,
see Section~\ref{Generalized Bott-Samelson varieties}. We also prove that the
normalization criterion for the two-color morphisms holds (Lemma~\ref{lemma:13}), which allows us
to identify them with the morphism $f_{s,t}$ defined by Libedinsky in~\cite[Lemme 4.7]{Libedinsky}.

If we compose these elements differently as in Section~\ref{The Jones-Wenzl projector},
we obtain the Jones-Wenzl projectors, which we also represent by two-color vertices
of the same valency as the vertices representing the morphisms $f_{s,t}$.
Some relations can be proved using the direct construction of the morphisms.
The examples are given in Sections~\ref{Horizontal_extensions}, \ref{The Jones-Wenzl projector},
and~\ref{Two-color dot contraction}. We conjecture that all other relations defining
the diagrammatic category for the given group can also be proved similarly.
It also would be interesting to find a geometric description for morphisms represented
by strips containing more than one vertex, at least in the case when all vertices
of this strip are represented either by pull-backs or push-forwards.

\section{Notation and basic constructions}\label{Notation and basic constructions}
\subsection{Set theory} We denote sets by capital Latin letters and their elements by small Latin letters.
We use the usual signs for the union, the intersection and the Cartesian product\footnote{We also use the following notation for the power $X^n=\underbrace{X\times\cdots\times X}_{n\text{ times}}$.}.
To avoid confusion with quotients, we denote the difference of sets by $-$.
For a sequence of indices $1\le i_1<i_2<\cdots<i_k\le n$ and sets $X_1,\ldots,X_n$,
we denote by $\pr_{i_1,i_2,\ldots,i_k}$ the projection
$X_1\times\cdots\times X_n\to X_{i_1}\times\cdots\times X_{i_k}$ to the corresponding coordinates.

Let $X$, $Y$, and $S$ be sets and $f:X\to S$ and $g:Y\to S$ be maps. We denote
$$
X\times_{f=g} Y=\{(x,y)\in X\times Y\suchthat f(x)=g(y)\}.
$$
This set is called the {\it fiber product} of $X$ and $Y$ with respect to $f$ and $g$.

We will often use the singleton $\pt$ whose unique element will be denoted by $pt$.

\subsection{Sequences} 
The length $n$ of a finite sequence $s$ is denoted by $|s|$ and its $i$th element is denoted by $s_i$.
Thus $s=(s_1,\ldots,s_n)$. The case $|s|=0$ corresponds to the empty sequence, which we denote by $\emptyset$.
If however $|s|>0$, then we get the truncated sequence $s'=(s_1,\ldots,s_{n-1})$.
If $t=(t_1,\ldots,t_m)$ is another sequence, we consider their {\it concatenation}
$$
st=(s_1,\ldots,s_n,t_1,\ldots,t_m).
$$
Obviously, $|st|=|s|+|t|$.

\subsection{Modules} Let $M$ and $M'$ be right modules over the rings $R$ and $R'$, respectively.
Suppose that there is an isomorphism of rings $\iota:R\ito R'$. Then a map $\mu:M\to M'$ is called an
{\it isomorphism of modules} if it is an isomorphism of the underlying abelian groups and
\begin{equation}\label{eq:0}
\mu(mr)=\mu(m)\iota(r)
\end{equation}
for any $m\in M$ and $r\in R$. We assume the similar definitions for left modules and bimodules.

The tensor product of a right $R$-module $M$ by a left $R$-module $N$ is denoted as usual
by $M\otimes_R N$. If $R$ is commutative we can also consider the $n$th {\it tensor power}
$N\otimes_R\cdots\otimes_RN$ with $n$ factors, which we denote by $N^{\otimes_Rn}$.

\subsection{Group actions and quotients}\label{Group_actions_and_quotients} In this paper, we consider left as well as right actions of groups on sets.
Therefore, we will be careful to distinguish between the left and the right quotients. More exactly, let a group $G$ act on a set $X$
on the left. We will denote this fact by $G\circlearrowright X$.
Then we denote by
$$
G\lq X=\{Gx\suchthat x\in X\}
$$
the {\it quotient} space. The map $X\to G\lq X$ that sends each $x$ to $Gx$ is called the {\it quotient} map
(for the action of $G$). We also consider the set of $G$-fixed points
$$
{}^{G\!}X=\{x\in X\suchthat \forall g\in G: gx=x\}
$$

Let $G$ also act on the left on $Y$ and $f:X\to Y$ be a map. We say that $f$ is {\it $G$-equivariant} if $f(gx)=gf(x)$
for any $x\in X$ and $g\in G$. In this case, we denote by $G\lq f$ the map from $G\lq X$ to $G\lq Y$ given by
$$
(G\lq f)(Gx)=Gf(x).
$$

Similarly, if $G$ acts on the right on $X$, we denote this fact by $X\circlearrowleft G$, set
$$
X/G=\{xG\suchthat x\in X\},
$$
use the similar notion of the quotient map and $G$-equivariance, and denote similarly
the set of $G$-fixed points and the maps between the quotients.

Moreover, if groups $L$ and $R$ act on a set $X$ on the left and on the right, respectively, then we say that
the actions of $L$ and $R$ {\it commute} if $l(xr)=(lx)r$ for any $l\in L$, $r\in R$, and $x\in X$.
In this case, $L$ acts on $X/R$ on the left and $R$ acts on $L\lq X$ on the right by $l(xR)=(lx)R$ and $(Lx)r=L(xr)$,
respectively.
If additionally $Y$ is another space endowed with commuting left and right actions of $L$ and $R$, respectively,
then we say that a map $f:X\to Y$ is {\it $R$-$L$-equivariant} if it is $L$-equivariant and $R$-equivariant simultaneously.

Suppose now that $G$ acts on the left both on $X$ and $E$. Then $G$ acts on the left on the product $X\times E$ diagonally:
$g(x,e)=(gx,ge)$. Then we set
$$
X\,{}_G\!\times\,E=G\lq(X\times E).
$$
We call the map $X\,{}_G\!\times\,E\to G\lq E$ that maps any orbit $G(x,e)$ to $Ge$ the {\it canonical projection}.
We denote canonical projections by $\can$.
If $X'$ and $E'$ are another sets endowed with left $G$-actions and
$\alpha:X\to X'$ and $\beta:E\to E'$
are $G$-equivariant maps, then we denote by $\alpha{}\,{}_G\!\times \beta$ the map from
$X\,{}_G\!\times\,E$ to $X'\,{}_G\!\times\,E'$ given by $G(x,e)\mapsto G(\alpha(x),\beta(e))$.

\subsection{Quotient products}\label{Quotient_products}
Let $X_1,\ldots,X_n$ be topological spaces and $G_1,\ldots,G_{n-1}$ be
topological groups such that each $G_i$ acts on the right on $X_i$ and on the left on $X_{i+1}$.
Suppose additionally that the actions of $G_{i-1}$ and $G_i$ on $X_i$ commute for each $i=2,\ldots,n-1$.
Then we consider the following equivalence relation $\sim$ on $X_1\times\cdots \times X_n$:
$$
(x_1,\ldots,x_n)\sim(x_1g_1,g_1^{-1}x_2g_2,\ldots,g_{n-1}x_n)
$$
for any $g_1\in G_1$, \ldots, $g_{n-1}\in G_{n-1}$.
We denote
$$
X_1\mathop{\times}\limits_{\;\,G_1}X_2\mathop{\times}\limits_{\;\,G_2}\cdots\!\!\!\!\!\mathop{\times}\limits_{\;\;\;\;\;\;G_{n-1}}X_n=X_1\times X_2\times\cdots\times X_n/\sim.
$$
If $n=0$ this space is just the singleton and if $n=1$ this space is $X_1$.
The equivalence class containing $(x_1,x_2,\dots,x_n)$ is denoted by $[x_1:x_2:\cdots:x_n]$.
If $n>0$ and $G_0$ is a group acting on the left on $X_1$ such that in the case $n>1$
this action and the action of $G_1$ on $X_1$ commute, then we get the following left action of $G_1$ on
$X_1\mathop{\times}\limits_{\;\,G_1}X_2\mathop{\times}\limits_{\;\,G_2}\cdots\!\!\!\!\!\mathop{\times}\limits_{\;\;\;\;\;\;G_{n-1}}X_n$:
\begin{equation}\label{eq:left}
g_0[x_1:x_2:\cdots:x_n]=[g_0x_1:x_2:\cdots:x_n].
\end{equation}
Similarly, for $n>0$ and a group $G_n$ acting on $X_n$ on the right
such that in the case $n>1$ this action and the action of $G_{n-1}$ on $X_n$ commute, 
we get the following right action:
\begin{equation}\label{eq:right}
[x_1:x_2:\cdots:x_n]g_n=[x_1:x_2:\cdots:x_ng_n].
\end{equation}

\noindent
This  construction is associative, that is,
$$
X_1\!\mathop{\times}\limits_{\;\,G_1}\!\!\cdots\!\!\!\!\!\!\!\mathop{\times}\limits_{\;\;\;\;\;\;G_{n-1}}X_n\cong
X_1\!\!\mathop{\times}\limits_{\;\,G_1}\!\cdots\!\!\!\!\!\!\!\mathop{\times}\limits_{\;\;\;\;\;\;\;G_{m-2}} X_{m-1}\!\!\!\!\!\mathop{\times}\limits_{\;\;\;\;\;\;\;G_{m-1}}(X_m\!\mathop{\times}\limits_{\;\,G_m}\cdots\!\!\!\!\!\!\mathop{\times}\limits_{\;\;\;\;\;\;G_{k-1}}X_k)\mathop{\times}\limits_{\;\,G_k}X_{k+1}\!\!\!\!\!\mathop{\times}\limits_{\;\;\;\;\;G_{k+1}}\!\!\cdots\!\!\!\!\!\!\mathop{\times}\limits_{\;\;\;\;\;\;G_{n-1}}X_n.
$$
for $1\le m\le k\le n$. We will make this identification throughout the paper. Hence
$[x_1:\cdots:x_n]=[x_1:\cdots:x_{m-1}:[x_m:\cdots:x_k]:x_{k+1}:\cdots:x_n]$.


\subsection{More quotient products}\label{More quotient products} Keeping the notation and assumptions of the previous section, consider
the quotient
$$
X_1\mathop{\times}\limits_{\;\,G_1}X_2\mathop{\times}\limits_{\;\,G_2}\cdots\!\!\!\!\!\mathop{\times}\limits_{\;\;\;\;\;\;G_{n-1}}X_n/G_n=X_1\times X_2\times\cdots\times X_n/\approx,
$$
where the equivalence relation $\approx$ is given by $(x_1,\ldots,x_n)\approx(x_1g_1,g_1^{-1}x_2g_2,\ldots,g_{n-1}x_ng_n)$
for any $g_1\in G_1$, \ldots, $g_{n-1}\in G_{n-1}$, $g_n\in G_n$.
The equivalence class containing $(x_1,x_2,\dots,x_n)$ is denoted by $[x_1:x_2:\cdots:x_n\rr$.
In view of the isomorphism
$$
X_1\mathop{\times}\limits_{\;\,G_1}X_2\mathop{\times}\limits_{\;\,G_2}\cdots\!\!\!\!\!\mathop{\times}\limits_{\;\;\;\;\;\;G_{n-1}}X_n/G_n\cong X_1\mathop{\times}\limits_{\;\,G_1}X_2\mathop{\times}\limits_{\;\,G_2}\cdots\!\!\!\!\!\mathop{\times}\limits_{\;\;\;\;\;\;G_{n-1}}X_n\mathop{\times}\limits_{\;\;\;G_n}\pt,
$$
we will assume $[x_1:x_2:\cdots:x_n\rr=[x_1:x_2:\cdots:x_n:pt]$, where $G_n$ acts on $\pt$ on the left trivially.
This identification, allows us to introduce the left action of $G_0$ on the above space
for any $n$ (including the case $n=0$). Note that
$$
X_1\mathop{\times}\limits_{\;\,G_1}X_2\mathop{\times}\limits_{\;\,G_2}\cdots\!\!\!\!\!\mathop{\times}\limits_{\;\;\;\;\;\;G_{n-1}}X_n/G_n\cong
(X_1\mathop{\times}\limits_{\;\,G_1}X_2\mathop{\times}\limits_{\;\,G_2}\cdots\!\!\!\!\!\mathop{\times}\limits_{\;\;\;\;\;\;G_{n-1}}X_n)/G_n
$$
and
$$
X_1\mathop{\times}\limits_{\;\,G_1}X_2\mathop{\times}\limits_{\;\,G_2}\cdots\!\!\!\!\!\mathop{\times}\limits_{\;\;\;\;\;\;G_{n-1}}X_n/G_n\cong
X_1\mathop{\times}\limits_{\;\,G_1}X_2\mathop{\times}\limits_{\;\,G_2}\cdots\!\!\!\!\!\mathop{\times}\limits_{\;\;\;\;\;\;G_{n-1}}(X_n/G_n)
$$
if $n>0$.

Finally note that
$$
X_1\mathop{\times}\limits_{\;\,G_1}\cdots\!\!\!\!\!\mathop{\times}\limits_{\;\;\;\;\;\;G_{n-1}}X_n/G_n\cong X_1\mathop{\times}\limits_{\;\,G_1}\cdots\!\!\!\!\!\mathop{\times}\limits_{\;\;\;\;\;\;G_{i-1}}X_i\mathop{\times}\limits_{\;\,G_i}G_i\mathop{\times}\limits_{\;\,G_i}X_{i+1}\!\!\!\!\mathop{\times}\limits_{\;\;\;\;G_{i+1}}\cdots\!\!\!\!\!\mathop{\times}\limits_{\;\;\;\;\;\;G_{n-1}}X_n/G_n
$$
where we assume that $[x_1:\cdots:x_n\rr=[x_1:\cdots:x_i:1:x_{i+1}:\cdots:x_n\rr$.
We will make this identification, when appropriate.

\subsection{Topology}\label{Topology} A {\it topological group} is a group $G$ that is also a topological space
such that the multiplication and taking the inverse are continuous maps.
We say that $G$ acts continuously on a topological space $X$ if $G$ acts on $X$ set-theoretically so that
the multiplication $G\times X\to X$ (resp. $X\times G\to X$) is continuous.
In this case, we also say that $X$ is a left (resp. right) $G$-{\it space}.

We are going to use in the sequel the following topological result.

\begin{proposition}\label{proposition:1}
Let $X$ and $Y$ be topological spaces and $f:X\to Y$ be a continuous bijection.
Suppose that there exists an open covering $Y=\bigcup_{i\in I}U_i$ and continuous functions
$g_i:U_i\to X$ for each $i\in I$ such that $fg_i(y)=y$ for any $i\in I$ and $y\in U_i$.
Then $f$ is a homeomorphism.
\end{proposition}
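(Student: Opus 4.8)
The plan is to upgrade the given continuous bijection $f$ to a homeomorphism by checking that its inverse is continuous, which is a local question on the target $Y$. First I would observe that continuity of $f^{-1}:Y\to X$ can be verified on each member of the open cover $\{U_i\}_{i\in I}$ separately: a map into $X$ is continuous iff its restrictions to the sets of an open cover are continuous, and since the $U_i$ are open in $Y$, it suffices to show that $f^{-1}|_{U_i}:U_i\to X$ is continuous for every $i$. So fix $i\in I$.

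The key step is to identify $f^{-1}|_{U_i}$ with the given continuous function $g_i$. By hypothesis $f\circ g_i=\id_{U_i}$, so for any $y\in U_i$ we have $f(g_i(y))=y=f(f^{-1}(y))$; since $f$ is injective, this forces $g_i(y)=f^{-1}(y)$. Hence $f^{-1}|_{U_i}=g_i$ as maps $U_i\to X$, and $g_i$ is continuous by assumption. Combining this over all $i\in I$ shows $f^{-1}$ is continuous, so $f$ is a homeomorphism.

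I do not expect a genuine obstacle here; the statement is essentially a packaging of the standard fact that a continuous bijection with local continuous sections has continuous inverse. The only point that needs a word of care is the reduction to the cover: one should note explicitly that $\{U_i\}$ being an \emph{open} cover of $Y$ is what makes "continuous on each $U_i$" equivalent to "continuous on $Y$" — if the $U_i$ were merely an arbitrary cover this step would fail. Everything else is the one-line injectivity argument above, so the write-up is short.
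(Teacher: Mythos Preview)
Your proof is correct. The paper does not actually prove Proposition~\ref{proposition:1}; it is stated as a known topological result and used later (for instance in the proof of Theorem~\ref{theorem:2}). Your argument---using injectivity of $f$ to identify $f^{-1}|_{U_i}$ with $g_i$ and then invoking that continuity is local on an open cover---is the standard one and would serve perfectly well as the omitted proof.
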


Let $E$ and $B$ be $G$-spaces and $p:E\to B$ be a continuous $G$-equivariant map.
We say that $p$ is a {\it principal $G$-bundle} if for each point $b\in B$, there exist an open neighbourhood $U\subset B$
and a $G$-equivariant homeomorphism $h:G\times U\ito p^{-1}(U)$ for the left $G$-action or
$h:U\times G\ito p^{-1}(U)$ for the right $G$-action such that the respective diagram
$$
\begin{tikzcd}
G\times U\arrow{rr}{h}[swap]{\sim}\arrow{dr}[swap]{\pr_2}&&p^{-1}(U)\arrow{dl}{p}\\
&U&
\end{tikzcd}
\qquad\quad
\begin{tikzcd}
U\times G\arrow{rr}{h}[swap]{\sim}\arrow{dr}[swap]{\pr_1}&&p^{-1}(U)\arrow{dl}{p}\\
&U&
\end{tikzcd} $$
is commutative. 
The $G$-equivariance of $h$
actually means that for any $u\in U$ and $g_1,g_2\in G$, there holds
$$
h(g_1g_2,u)=g_1h(u,g_2)
$$
for the left $G$-action and
$$
h(u,g_1g_2)=h(u,g_1)g_2
$$
for the right $G$-action.
By abuse of notation, we have written $p$ instead of restriction of $p$ to $p^{-1}(U)$ in the diagrams above.
We will apply the similar abbreviation in the other diagrams in this paper.

We have the following well-known result, see, for example,~\cite[Section 1.4]{Jantzen}.
\begin{proposition}\label{proposition:pb:1}
Let $X$ and $E$ be left $G$-spaces. Suppose that the quotient map $E\to G\lq E$ is a principal $G$-bundle.
Then the canonical projection $X\times_G E\to G\lq E$ is a fibre bundle with fibre $X$.
\end{proposition}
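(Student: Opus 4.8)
The plan is to prove that $\can$ is locally trivial by pulling back the local trivialisations of $p\colon E\to G\lq E$; this is the classical ``associated bundle'' construction. Fix a point $b\in G\lq E$. Since $p$ is a principal $G$-bundle, there are an open neighbourhood $U$ of $b$ and a $G$-equivariant homeomorphism $h\colon G\times U\ito p^{-1}(U)$ with $p\circ h=\pr_2$ and $h(g_1g_2,u)=g_1h(g_2,u)$. The subset $p^{-1}(U)\subseteq E$ is $G$-stable and open, hence so is $X\times p^{-1}(U)\subseteq X\times E$, and it is saturated for the quotient map $q\colon X\times E\to X\times_G E=G\lq(X\times E)$; restricting $q$ to it therefore identifies $\can^{-1}(U)$ with $X\times_G p^{-1}(U)$ as a subspace of $X\times_G E$.

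Next I would transport the diagonal $G$-action through $h$: the homeomorphism $\id_X\times h\colon X\times G\times U\ito X\times p^{-1}(U)$ intertwines the left action $g\cdot(x,g',u)=(gx,gg',u)$ with the diagonal action on $X\times p^{-1}(U)$, precisely because $h(gg',u)=g\,h(g',u)$. Passing to quotients gives a homeomorphism $\can^{-1}(U)\cong G\lq(X\times G\times U)$ under which $\can$ corresponds to $[x,g',u]\mapsto u$ (indeed the class of $(x,h(g',u))$ maps under $\can$ to $p(h(g',u))=u$). So it remains to trivialise $G\lq(X\times G\times U)$ over $U$.

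For this, consider $r\colon X\times G\times U\to X\times U$, $r(x,g',u)=\big((g')^{-1}x,u\big)$; it is continuous since inversion in $G$ and the action map are continuous, and it is $G$-invariant because $r(gx,gg',u)=\big((gg')^{-1}gx,u\big)=\big((g')^{-1}x,u\big)$, so it descends to a continuous $\bar r\colon G\lq(X\times G\times U)\to X\times U$. Conversely $s\colon X\times U\to G\lq(X\times G\times U)$, $s(x,u)=[x,1,u]$, is continuous. One checks $\bar r\circ s=\id$ and $s\circ\bar r([x,g',u])=[(g')^{-1}x,1,u]=[x,g',u]$, the last step because $g'\cdot\big((g')^{-1}x,1,u\big)=(x,g',u)$; hence $\bar r$ is a homeomorphism with inverse $s$, and $\pr_2\circ\bar r$ is the projection to $U$. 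Composing the three identifications yields a homeomorphism $X\times U\ito\can^{-1}(U)$ commuting with the maps to $U$, i.e.\ a local trivialisation of $\can$; as $b\in G\lq E$ was arbitrary, $\can$ is a fibre bundle with fibre $X$.

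I do not expect a real obstacle — this is the standard argument — but the step deserving most care is the identification $\can^{-1}(U)\cong G\lq(X\times G\times U)$: one must check that $X\times p^{-1}(U)$ is saturated so that the restricted $q$ is still a quotient map (here one uses that quotients by group actions are open maps, applied to $q$ and to $p$), and one must keep the equivariance convention for $h$ consistent with the diagonal action. The remaining verifications — bijectivity and continuity of the two displayed maps, and compatibility with the projections to $U$ — are routine.
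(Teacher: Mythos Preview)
Your argument is correct and is exactly the standard ``associated bundle'' construction. The paper itself does not give a proof of this proposition: it is stated as a well-known result with a reference to~\cite[Section 1.4]{Jantzen}, so there is nothing to compare against beyond noting that your proof is the expected one.
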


We also can make new principal bundles from the already existing ones.
\begin{lemma}\label{lemma:pb:1}
Let $X$ and $Y$ be right and left $G$-spaces, respectively.
Let, moreover, $H$ act continuously on $Y$ on the right so that this action and the left action of $G$ commute.
Suppose that the quotient maps $X\to X/G$ and $Y\to Y/H$ are principal $G$- and $H$-bundles, respectively.
Then the quotient map
\begin{equation}\label{eq:pb:1}
X\mathop{\times}\limits_G Y\to X\mathop{\times}\limits_G Y/H
\end{equation}
for the right action~(\ref{eq:right}) is a principal $H$-bundle.
\end{lemma}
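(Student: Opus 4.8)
The plan is to verify the local triviality of the map~(\ref{eq:pb:1}) directly by producing $H$-equivariant trivializations over an open covering of the base, using the trivializations of the bundles $X\to X/G$ and $Y\to Y/H$ that we are given. First I would set up the base: the space $X\mathop{\times}\limits_G Y/H$ carries the quotient map $q:X\times Y\to X\mathop{\times}\limits_G Y/H$, and since $X\to X/G$ is a principal $G$-bundle, the composite $X\times Y\to X\mathop{\times}\limits_G Y \to X/G$ (sending $[x:y]\mapsto xG$) is well defined; in fact $X\mathop{\times}\limits_G Y/H \cong X\mathop{\times}\limits_G(Y/H)$ by the identifications of Section~\ref{More quotient products}, and the structure map $\pi:X\mathop{\times}\limits_G(Y/H)\to X/G$, $[x:yH]\mapsto xG$, is (by Proposition~\ref{proposition:pb:1}, applied with the roles of the two factors swapped to the right-action setting) a fibre bundle with fibre $Y/H$. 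So it suffices to produce $H$-equivariant local trivializations of $X\mathop{\times}\limits_G Y$ over the open sets obtained by pulling back, along $\pi$, a trivializing cover $X/G=\bigcup_\alpha V_\alpha$ of the bundle $X\to X/G$.

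Next I would build the trivialization over one such piece. Fix $\alpha$ and a $G$-equivariant homeomorphism $\psi_\alpha:V_\alpha\times G\ito r^{-1}(V_\alpha)$ over $V_\alpha$, where $r:X\to X/G$; equivalently a continuous section $\sigma_\alpha:V_\alpha\to X$ of $r$. Over the open set $U_\alpha:=\pi^{-1}(V_\alpha)\subset X\mathop{\times}\limits_G Y/H$ I claim there is a homeomorphism $U_\alpha\cong V_\alpha\times Y$ compatible with the right $H$-actions, where $H$ acts on $V_\alpha\times Y$ only on the second factor. Indeed, every point of $U_\alpha$ has a representative of the form $(\sigma_\alpha(v),y)$ with $v\in V_\alpha$ and $y\in Y$ uniquely determined (uniqueness because the $G$-action on $r^{-1}(V_\alpha)$ is free and $\sigma_\alpha(v)$ is a chosen slice), and the assignment $[\sigma_\alpha(v):y]\mapsto (v,y)$ is the desired map. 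Its continuity in both directions follows from Proposition~\ref{proposition:1}: the inverse $V_\alpha\times Y\to U_\alpha$, $(v,y)\mapsto [\sigma_\alpha(v):y]$, is manifestly continuous (it is $q$ composed with $(\sigma_\alpha\times\id_Y)$), it is a bijection onto $U_\alpha$, and a local inverse section on each member of the open cover of $U_\alpha$ is given by $[x:y]\mapsto (r(x),g\cdot y)$ where $g\in G$ is the unique element with $x=g\,\sigma_\alpha(r(x))$, which is continuous on the preimage of each $V_\alpha$-chart by the usual formula for the "difference" map of a principal bundle. Under this identification the quotient map~(\ref{eq:pb:1}) becomes $V_\alpha\times Y\to V_\alpha\times(Y/H)$, i.e. $\id_{V_\alpha}\times(Y\to Y/H)$, which is a principal $H$-bundle because $Y\to Y/H$ is one by hypothesis and the product of a trivial bundle of fibre $V_\alpha$ with a principal $H$-bundle is again a principal $H$-bundle. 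Pulling back a local trivialization of $Y\to Y/H$ then gives an $H$-equivariant local trivialization of~(\ref{eq:pb:1}) over a neighbourhood of any prescribed point.

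Finally I would check the bookkeeping that makes this a \emph{principal} $H$-bundle rather than merely a fibre bundle with fibre $H$: the right $H$-action on $X\mathop{\times}\limits_G Y$ is free (because the $H$-action on $Y$ is, being principal) and the trivializations constructed above are $H$-equivariant by construction, so the defining diagram in Section~\ref{Topology} for a principal $H$-bundle commutes. I expect the main obstacle to be the continuity argument for the "slice coordinate" map $[x:y]\mapsto(r(x), g\cdot y)$ — one must be careful that $g$ is only locally a continuous function of $x$ (on each $V_\alpha$-chart) and that $y$ is only defined up to the $G$-relation, so the cleanest route is exactly the one above: never write down $g$ as a global function, instead invoke Proposition~\ref{proposition:1} with the open cover of $U_\alpha$ coming from the trivializing cover of $X\to X/G$, exhibiting on each member an explicit continuous right inverse of the continuous bijection $V_\alpha\times Y\to U_\alpha$. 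Everything else is a routine diagram chase, and the associativity/identification isomorphisms of Sections~\ref{Quotient_products}--\ref{More quotient products} handle the reduction to $X\mathop{\times}\limits_G(Y/H)\to X/G$.
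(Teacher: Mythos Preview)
Your argument is correct and follows a natural two–step strategy: first use the trivialization of $X\to X/G$ to identify the restriction of~(\ref{eq:pb:1}) over $\pi^{-1}(V_\alpha)$ with the product map $\id_{V_\alpha}\times(Y\to Y/H)$, and then invoke the hypothesis that $Y\to Y/H$ is principal. The paper instead does it in a single pass: given a point $[x_0:y_0\rr$ of the base, it extracts from the two trivializations a $G$-valued gauge $\g$ on a neighbourhood in $X$ and an $H$-valued gauge $\h$ on a neighbourhood in $Y$, and writes down the trivialization $\xi:W\times H\to f^{-1}(W)$ in closed form as $([x:y\rr,h)\mapsto[x:y\,\h(\g(x)y)^{-1}h]$, checking by formula that it is $H$-equivariant with inverse $[x:y]\mapsto([x:y\rr,\h(\g(x)y))$. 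Your reduction is arguably cleaner conceptually and avoids carrying both gauges simultaneously; the paper's version has the virtue of exhibiting the trivializing map explicitly, in the same spirit as the later hands-on arguments (e.g.\ Theorem~\ref{theorem:2}).

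Two small fixes to your write-up. First, you set $U_\alpha:=\pi^{-1}(V_\alpha)\subset X\times_G Y/H$ and then claim $U_\alpha\cong V_\alpha\times Y$ carries a right $H$-action; what you mean is the preimage $f^{-1}(U_\alpha)\subset X\times_G Y$, while $U_\alpha$ itself is $\cong V_\alpha\times(Y/H)$. Second, since $X$ is a \emph{right} $G$-space here, the slice equation should read $x=\sigma_\alpha(r(x))\cdot g$ rather than $x=g\,\sigma_\alpha(r(x))$. With those corrections your inverse $[x:y]\mapsto(r(x),g\cdot y)$ is already well defined and continuous on all of $f^{-1}(U_\alpha)$ (it descends from the continuous $G$-invariant map $(x,y)\mapsto(r(x),g\cdot y)$), so no further open refinement is needed before invoking Proposition~\ref{proposition:1}.
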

\begin{proof}
We denote map~(\ref{eq:pb:1}) by $f$. Thus $f([x:y])=[x:y\rr$. We also denote the quotient maps $X\to X/G$
and $Y\to Y/H$ by $p$ and $q$, respectively.

%

Let $[x_0:y_0\rr$ be an arbitrary point of $X\mathop{\times}\limits_G Y/H$.
By the hypothesis of the lemma, there exist an open neighborhood $U$ of $x_0G$
and a $G$-equivariant homeomorphism $\sigma$ such that the diagram
$$
\begin{tikzcd}
U\times G\arrow{dr}[swap]{\pr_1}\arrow{rr}{\sigma}[swap]{\sim}&&p^{-1}(U)\arrow{dl}{p}\\
&U&
\end{tikzcd}
$$
is commutative.
Let $\g=\pr_2\sigma^{-1}$. This is a continuous map from $p^{-1}(U)$ to $G$ such that
$\g(xg)=\g(x)g$ for any $x\in p^{-1}(U)$ and $g\in G$.

There exists an open neighborhood $V$ of $\g(x_0)y_0H$
and a $H$-equivariant homeomorphism $\tau$ such that the diagram
$$
\begin{tikzcd}
V\times H\arrow{dr}[swap]{\pr_1}\arrow{rr}{\tau}[swap]{\sim}&&q^{-1}(V)\arrow{dl}{q}\\
&V&
\end{tikzcd}
$$
is commutative.
Let $\h=\pr_2\tau^{-1}$.
This is a continuous map from $q^{-1}(V)$ to $H$ such that
$\h(yh)=\h(y)h$ for any $y\in q^{-1}(V)$ and $h\in H$.

Let $\lm:X\times Y/H\to X/G$ be the map given by $\lm([x:y\rr)=xG$ and $\mu:\lm^{-1}(U)\to Y/H$
be the map given by $\mu([x:y\rr)=\g(x)yH$. Obviously, both these maps are well-defined and continuous.
We set $W=\mu^{-1}(V)$. By our choice of $U$ and $V$, we get $[x_0:y_0\rr\in W$.
We define the function $\xi:W\times H\to X\mathop{\times}\limits_G Y$
by
$$
\([x:y\rr,h\)\mapsto [x:y\h(\g(x)y)^{-1}h].
$$
One can easily check that this map is well-defined, continuous, and $H$-equivarinat. Composing it with $f$, we get
$$
\begin{tikzcd}[column sep=15pt]
\([x:y\rr,h\)\arrow[mapsto]{r}{\xi}&{[x:y\h(\g(x)y)^{-1}h]}\arrow[mapsto]{r}{f}&{[x:y\h(\g(x)y)^{-1}hH\rr=[x:y\rr}.
\end{tikzcd}
$$

This calculation proves that $\xi$ is actually a map to $f^{-1}(W)$ and that the following diagram is commutative
$$
\begin{tikzcd}
W\times H\arrow{rr}{\xi}\arrow{dr}[swap]{\pr_1}&&f^{-1}(W)\arrow{dl}{f}\\
&W&
\end{tikzcd}
$$
It remains to prove that $\xi$ is a homeomorphism.
This is true, as the inverse map is given by
$$
\xi^{-1}([x:y])=\big([x:y\rr,\h(\g(x)y)\big).
$$
\end{proof}

\subsection{Equivariant cohomology}\label{Equivariant cohomology} Let $G$ be a topological group. A principal $G$-bundle $E\to B$
is called {\it universal} if the total space $E$ is contractible.
We will tacitly assume that such a bundle exists for any topological group considered here.
We define the $G$-equivariant cohomology of a left $G$-space $X$ with coefficients in a commutative rigns $\k$ by
$$
H^\bullet_G(X,\k)=H^\bullet(X\,{}_G\!\times\,E,\k).
$$

Considering the canonical projection $\can:X\,{}_G\!\times\,E\to G\lq E\cong\pt\,{}_G\!\times\,E$, we obtain the map
$\can^*:H^\bullet_G(\pt,\k)\to H^\bullet_G(X,\k)$. This map makes $H^\bullet_G(X,\k)$ into a
left $H^\bullet_G(\pt,\k)$-module by
$$
am=\can^*(a)\cup m,
$$
where $a\in H^\bullet_G(\pt,\k)$, $m\in H^\bullet_G(X,\k)$ and $\cup$ denotes the cup product. This module does not depend on the choice
of the universal bundle $E\to B$. This fact will be discussed in Corollary~\ref{corollary:1}.
We will call this left action of $H^\bullet_G(\pt,\k)$ {\it canonical}.

The similar right action of $H^\bullet_G(\pt,\k)$ on $H^\bullet_G(X,\k)$ given by
$ma=m\cup \can^*(a)$ will also be called {\it canonical}.
We will use it in Section~\ref{Equivariant_cohomology_of_the_flag_variety}.

If $X'$ is another left $G$-space and $f:X\to X'$ is a continuous $G$-equivariant map,
then we have the continuous map $f\,{}_G\!\times\,\id:X\,{}_G\!\times\,E\to X'\,{}_G\!\times\,E$.
This map induces the pull-back
$$
(f\,{}_G\!\times\,\id)^*:H^\bullet_G(X',\k)\to H^\bullet_G(X,\k)
$$
between cohomologies. We use the notation $f^\star=(f\,{}_G\!\times\,\id)^*$ to distinguish between the ordinary
and the equivariant pull-backs. For example $a_X^\star=\can^*$, where $a_X:X\to\pt$ is the constant map.

\subsection{Twisted action}\label{Twisted action} Let $G$ be a topological group and $L$ and $R$ be its subgroups.
We assume that $L$ and $R$ act on $G$ on the left and on the right, respectively, by multiplication.
As both actions commute, the group $L$ acts continuously on the quotient space $G/R$ on the left.

Let $X$ be another left $L$-space and $A:X\to G/R$ be an $L$-equivariant continuous map.
We will call $A$ a {\it twisting map}.
For any left $G$-space $E$, the map $A$ allows us to define the map
$$
v_A:X\,{}_L\!\!\times\,E\to R\lq E
$$
by
\begin{equation}\label{eq:1}
L(x,e)\mapsto A(x)^{-1}e.
\end{equation}
Note that $A(x)^{-1}\in R\lq G$.
The reader can easily check that $v_A$ is
well-defined and continuous. We call this map the {\it projection twisted by} $A$.

Let us suppose that the quotient maps $E\to L\lq E$ and $E\to R\lq E$ are universal principal $L$- and $R$-bundles,
respectively. Then we get the map between cohomologies $v_A^*:H^\bullet_R(\pt,\k)\to H^\bullet_L(X,\k)$, which induces the
following right $H^\bullet_R(\pt,\k)$-action on $H^\bullet_L(X,\k)$:
\begin{equation}\label{eq:i}
mb=m\cup v_A^*(b).
\end{equation}
We call this action the {\it action twisted by} $A$.

A priori, this construction of a right module depends on the choice of $E$.
However we get the following result.

\begin{lemma}\label{lemma:2}
Up to isomorphism, the right $H^\bullet_R(\pt,\k)$-module $H^\bullet_L(X,\k)$ is independent of the choice of $E$.
\end{lemma}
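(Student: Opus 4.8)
The plan is to compare the right module structures coming from two different choices of $G$-equivariant universal bundle, say $E$ and $E'$, by passing through the product $E\times E'$. First I would observe that $E\times E'$ is again a contractible space on which $G$ acts (diagonally on the left), and that the two projections $\pi:E\times E'\to E$ and $\pi':E\times E'\to E'$ are $G$-equivariant. Moreover the quotient maps $E\times E'\to L\lq(E\times E')$ and $E\times E'\to R\lq(E\times E')$ are again universal principal $L$- and $R$-bundles: they are principal bundles because $E\to L\lq E$ is (pull back along $L\lq E'\to\pt$, or argue directly with local sections coming from those of $E\to L\lq E$ together with the projection to $E'$), and the total space $E\times E'$ is contractible. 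Hence $E\times E'$ is a legitimate choice, and it suffices to show that the right module built from $E$ is isomorphic (via the identity map on the underlying abelian group, with $\iota=\id$ on $H^\bullet_R(\pt,\k)$ — which is itself independent of $E$ by the same argument applied to $R$ in place of $L$, i.e. $R\lq$ versus $L\lq$) to the one built from $E\times E'$; symmetry then finishes the proof.

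Next I would produce the comparison map. The $G$-equivariant projection $\pi:E\times E'\to E$ induces a homeomorphism $\id\,{}_L\!\times\,\pi:X\,{}_L\!\times\,(E\times E')\ito X\,{}_L\!\times\,E$ — it is a continuous bijection with continuous inverse because $\pi$ is, fibrewise over $L\lq E$, the projection of a (trivial-after-trivialization) product; alternatively use Proposition~\ref{proposition:1} with the open cover of $X\,{}_L\!\times\,E$ coming from local trivializations of $E\to L\lq E$. Call this homeomorphism $\Phi$ and let $\phi^*=(\Phi^{-1})^*:H^\bullet_L(X,\k)_{E}\ito H^\bullet_L(X,\k)_{E\times E'}$ be the induced ring isomorphism on cohomology; it is automatically an isomorphism of the canonical left $H^\bullet_L(\pt,\k)$-modules since $\Phi$ commutes with the canonical projections to $L\lq E$ resp.\ $L\lq(E\times E')$ up to the corresponding homeomorphism $R\lq(E\times E')\cong R\lq E$ induced by $\pi$ (for this last point one needs that $\pi$ is also $R$-equivariant, which it is, being $G$-equivariant). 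The key compatibility to check is that $\phi^*$ intertwines the twisted right actions, i.e.\ that $\phi^*(v_A^*(b)) = (v_A')^*(b)$ as elements of $H^\bullet_L(X,\k)_{E\times E'}$, where $v_A$ and $v_A'$ are the twisted projections~(\ref{eq:1}) for $E$ and for $E\times E'$.

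The heart of the argument — and the step I expect to be the main obstacle — is verifying this intertwining at the level of spaces: that the square
$$
\begin{tikzcd}
X\,{}_L\!\times\,(E\times E')\arrow{r}{\Phi}\arrow{d}[swap]{v_A'}&X\,{}_L\!\times\,E\arrow{d}{v_A}\\
R\lq(E\times E')\arrow{r}{R\lq\pi}&R\lq E
\end{tikzcd}
$$
commutes. Chasing an orbit $L(x,(e,e'))$: the left-then-bottom route sends it to $(R\lq\pi)\bigl(A(x)^{-1}(e,e')\bigr)=A(x)^{-1}e$ (reading $A(x)^{-1}\in R\lq G$ acting on $R\lq E$ and using that $\pi$ is $G$-equivariant so descends to $R\lq\pi$), while the top-then-right route sends it to $v_A(L(x,e))=A(x)^{-1}e$; so the square commutes on the nose. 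Once this is in hand, functoriality of $v^*$ on cohomology gives $\phi^*\circ v_A^* = (v_A')^*$ after identifying $R\lq(E\times E')$ with $R\lq E$ via the homeomorphism induced by $\pi$ (which is where one must be a little careful: $H^\bullet_R(\pt,\k)$ computed with $E\times E'$ must be identified with that computed with $E$, and one should check this identification is the canonical ring isomorphism — again an instance of Proposition~\ref{proposition:1}). Feeding this into~(\ref{eq:i}) and using that $\phi^*$ is a ring homomorphism, $\phi^*(m\cup v_A^*(b))=\phi^*(m)\cup(v_A')^*(b)$, so $\phi^*$ is an isomorphism of right $H^\bullet_R(\pt,\k)$-modules in the sense of~(\ref{eq:0}) with $\iota=\id$. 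The remaining routine points — well-definedness and continuity of all the descended maps, and that $E\times E'$ really satisfies all the standing hypotheses — are exactly of the type already dispatched in Lemma~\ref{lemma:pb:1} and Section~\ref{Twisted action}, so I would only sketch them.
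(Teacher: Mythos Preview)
Your approach is the same as the paper's: compare the two models via the diagonal $G$-action on the product $E\times E'$ and verify that the twisted projections $v_A$ are compatible with the two coordinate projections. The commutative square you write down is exactly the content of the paper's diagram~(\ref{eq:a}) (restricted to one side), and your final cup-product computation is the paper's verification of~(\ref{eq:0}).

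There is, however, one genuine error. You assert that $\id\,{}_L\!\times\,\pi:X\,{}_L\!\times\,(E\times E')\to X\,{}_L\!\times\,E$ and $R\lq\pi:R\lq(E\times E')\to R\lq E$ are homeomorphisms. They are not: your own local-trivialization argument shows that each is a fibre bundle with fibre $E'$, not a bijection. (Over a trivializing open $U\subset L\lq E$ the first map becomes the projection $X\times U\times E'\to X\times U$.) Consequently $\Phi^{-1}$ does not exist as a map of spaces, and you cannot define $\phi^*=(\Phi^{-1})^*$. What \emph{is} true, and is all that is needed, is that these maps induce isomorphisms on cohomology because $E'$ is contractible; you should set $\phi^*=(\Phi^*)^{-1}$ and similarly for $\iota$. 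The paper does exactly this: in diagram~(\ref{eq:b}) the slant arrows are cohomology isomorphisms (not induced by homeomorphisms), and the comparison isomorphisms $\mu$, $\iota$ are \emph{defined} by composing one slant isomorphism with the inverse of the other. With this correction your argument goes through verbatim. A related imprecision: the comparison is not ``the identity on the underlying abelian group with $\iota=\id$'' --- the two models of $H^\bullet_L(X,\k)$ and of $H^\bullet_R(\pt,\k)$ are different groups, identified only through these cohomology isomorphisms.
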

\begin{proof}
Let $\widetilde E$ be another left $G$-space such that $\widetilde E\to L\lq\widetilde E$ and
$\tilde E\to R\lq\tilde E$ are universal principal
$L$- and $R$-bundles, respectively. We use the following notation only in this proof:
$$
\widetilde H_L^\bullet(X,\k)=H^\bullet(X{}_L\!\times\widetilde E,\k), \quad \widetilde H_R^\bullet(\pt,\k)=H^\bullet(R\lq\widetilde E,\k).
$$
We also have the map $\tilde v_A:X{}_L\!\times\widetilde E\to R\lq\widetilde E$ similar to $v_A$ also given by~(\ref{eq:1}).
This map defines the structure of a right $\widetilde H^\bullet_R(\pt,\k)$-module on $\widetilde H^\bullet_L(X,\k)$
by
\begin{equation}\label{eq:ii}
\tilde m\tilde b=\tilde m\cup \tilde v_A^*(\tilde b).
\end{equation}

Let us make $L$ and $R$ act on the products $X\times E\times\widetilde E$ and $E\times\widetilde E$ respectively diagonally: 
$l(x,e,\tilde e)=(lx,le,l\tilde e)$ and $r(e,\tilde e)=(re,r\tilde e)$. Then we get the following commutative diagram:
\begin{equation}\label{eq:a}
\begin{tikzcd}
{}&L\lq(X\times E\times\widetilde E)\arrow{ddd}{q_A}\arrow{rd}{L\lq\pr_{1,3}}\arrow{ld}[swap]{L\lq\pr_{1,2}}&\\
X{}_L\!\times E\arrow{d}[swap]{v_A}&&X{}_L\!\times\widetilde E\arrow{d}{\tilde v_A}\\
R\lq E&&R\lq\widetilde E\\
&E{}_R\!\times\widetilde E\arrow{ru}[swap]{R\lq\pr_2}\arrow{lu}{R\lq\pr_1}&
\end{tikzcd}
\end{equation}
where $q_A$ is the map given by
$L(x,e,\tilde e)\mapsto A(x)^{-1}(e,\tilde e)$.
Hence we get the following diagram for cohomologies:
\begin{equation}\label{eq:b}
\begin{tikzcd}
{}&H^\bullet\big(L\lq(X\times E\times\widetilde E),\k\big)&\\
H^\bullet_L(X,\k)\arrow{ru}{\sim}\arrow[dashed]{rr}[near start]{\sim}[near end]{\mu}&&\widetilde H^\bullet_L(X,\k)\arrow[swap]{lu}{\sim}\\
H^\bullet_R(\pt,\k)\arrow[swap]{rd}{\sim}\arrow{u}{v_A^*}\arrow[dashed]{rr}[near start]{\sim}[near end]{\iota}&&\widetilde H^\bullet_R(\pt,\k)\arrow{ld}{\sim}\arrow[swap]{u}{\tilde v_A^*}\\
&H^\bullet(E{}_R\!\times\widetilde E,\k)\arrow{uuu}{q_A^*}&
\end{tikzcd}
\end{equation}
If we forget about the dashed arrows, then we get a commutative diagram.
Thereofre the central rectangle (with the dashed arrows) is also commutative, because all slant arrows are isomorphisms.
As all maps preserve the additive structure on cohomologies, it remains to check condition~(\ref{eq:0}):
for $m\in H_L^\bullet(X,\k)$ and $b\in H_R^\bullet(\pt,\k)$, we get
$$
\mu(mb)=\mu(m\cup v_A^*(b))=\mu(m)\cup \mu v_A^*(b)=\mu(m)\cup \tilde v_A^*\iota(b)=\mu(m)\iota(b),
$$
where we use the multiplication rules~(\ref{eq:i}) and~(\ref{eq:ii}) and the fact that $\mu$ preserves the cup product.
\end{proof}

\begin{corollary}[of Lemma~\ref{lemma:2}]\label{corollary:1}
Up to isomorphism, the $H_L^\bullet(\pt,\k)$-$H^\bullet_R(\pt,\k)$-bimodule $H^\bullet_L(X,\k)$ is independent of the choice of $E$.
\end{corollary}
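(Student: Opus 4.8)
The plan is to upgrade the isomorphism $\mu$ produced in the proof of Lemma~\ref{lemma:2} from an isomorphism of right modules to an isomorphism of bimodules. For the two choices $E$ and $\widetilde E$ occurring there we already have an additive isomorphism $\mu:H^\bullet_L(X,\k)\to\widetilde H^\bullet_L(X,\k)$, namely $\mu=\big((L\lq\pr_{1,3})^*\big)^{-1}\circ(L\lq\pr_{1,2})^*$ coming from the span $X{}_L\!\times E\xleftarrow{L\lq\pr_{1,2}}L\lq(X\times E\times\widetilde E)\xrightarrow{L\lq\pr_{1,3}}X{}_L\!\times\widetilde E$, together with a ring isomorphism $\iota:H^\bullet_R(\pt,\k)\to\widetilde H^\bullet_R(\pt,\k)$ satisfying $\mu(mb)=\mu(m)\iota(b)$. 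So the only new ingredient needed is a ring isomorphism $\kappa:H^\bullet_L(\pt,\k)\to\widetilde H^\bullet_L(\pt,\k)$ with $\mu(cm)=\kappa(c)\,\mu(m)$ for the canonical left actions, and I would obtain it by rerunning the argument of Lemma~\ref{lemma:2} with the twisted projections $v_A$, $\tilde v_A$ replaced by the canonical projections $\can$.

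Concretely, let $L$ act diagonally on $E\times\widetilde E$; the two projections induce $L\lq\pr_1:L\lq(E\times\widetilde E)\to L\lq E$ and $L\lq\pr_2:L\lq(E\times\widetilde E)\to L\lq\widetilde E$, each of which is a fibre bundle with contractible fibre ($\widetilde E$, resp.\ $E$) by Proposition~\ref{proposition:pb:1} and hence induces an isomorphism on cohomology, exactly as for $L\lq\pr_{1,2}$ and $L\lq\pr_{1,3}$ in Lemma~\ref{lemma:2}. I set $\kappa=\big((L\lq\pr_2)^*\big)^{-1}\circ(L\lq\pr_1)^*$ under the identifications $H^\bullet_L(\pt,\k)=H^\bullet(L\lq E)$ and $\widetilde H^\bullet_L(\pt,\k)=H^\bullet(L\lq\widetilde E)$. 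Then I write down the analogue of diagram~(\ref{eq:a}): the same top vertex $L\lq(X\times E\times\widetilde E)$ with the same slant maps $L\lq\pr_{1,2}$ and $L\lq\pr_{1,3}$, the canonical projections $X{}_L\!\times E\to L\lq E$ and $X{}_L\!\times\widetilde E\to L\lq\widetilde E$ in place of $v_A$ and $\tilde v_A$, the bottom vertex $L\lq(E\times\widetilde E)$ with slant maps $L\lq\pr_1$ and $L\lq\pr_2$, and the vertical map from the top vertex to the bottom vertex equal to the canonical projection $L(x,e,\tilde e)\mapsto L(e,\tilde e)$. Both lenses commute because the two composites down to $L\lq E$ (resp.\ $L\lq\widetilde E$) both send $L(x,e,\tilde e)$ to $Le$ (resp.\ $L\tilde e$), just as in~(\ref{eq:a}).

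Passing to cohomology yields the analogue of diagram~(\ref{eq:b}); after deleting the two arrows that define $\mu$ and $\kappa$ it is commutative, and since its four slant arrows are isomorphisms the deleted rectangle is commutative as well, which is precisely the identity $\mu\circ\can^{*}=\can^{*}\circ\kappa$ for the canonical left-action maps associated with $E$ and with $\widetilde E$. Since $\mu$ is a pull-back composed with the inverse of a pull-back, it preserves the cup product, so for $c\in H^\bullet_L(\pt,\k)$ and $m\in H^\bullet_L(X,\k)$,
$$
\mu(cm)=\mu\big(\can^{*}(c)\cup m\big)=\can^{*}\big(\kappa(c)\big)\cup\mu(m)=\kappa(c)\,\mu(m),
$$
which is condition~(\ref{eq:0}) on the left side. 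Combined with Lemma~\ref{lemma:2}, this exhibits $\mu$ as an isomorphism of $H_L^\bullet(\pt,\k)$-$H^\bullet_R(\pt,\k)$-bimodules relative to $\kappa$ and $\iota$, proving the claim. I do not anticipate a real obstacle here: the whole argument is a transcription of Lemma~\ref{lemma:2}, the only points to keep in mind being that it is the same map $\mu$ that witnesses both the left and the right compatibility (it depends only on the span displayed above, not on $v_A$) and that the two new slant maps $L\lq\pr_1$, $L\lq\pr_2$ are cohomology isomorphisms for the same contractibility reason as their counterparts in Lemma~\ref{lemma:2}.
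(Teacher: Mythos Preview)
Your proposal is correct and follows essentially the same approach as the paper: the paper's proof simply says to rerun the argument of Lemma~\ref{lemma:2} with $R$ replaced by $L$, with $v_A$ and $\tilde v_A$ replaced by the canonical projections, and with $q_A$ replaced by $L\lq\pr_{2,3}$, which is exactly the diagram you wrote down. You have correctly identified the key point that the map $\mu$ depends only on the top span $L\lq\pr_{1,2}$, $L\lq\pr_{1,3}$ and hence is the same isomorphism witnessing both the left and the right compatibility.
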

\begin{proof} It suffices to prove the result similar to Lemma~\ref{lemma:2} for the canonical action.
We can do it if in~(\ref{eq:a}) we replace: $R$ with $L$; $v_A$ and $\tilde v_A$ with the canonical projections
$X\,{}_L\!\!\times\,E\to L\lq E$ and $X\,{}_L\!\!\times\,\tilde E\to L\lq \tilde E$, respectively;
$q_A$ with $L\lq\pr_{2,3}$.
\end{proof}

\begin{remark}\label{remark:1}\rm The above stipulation that both $E\to L\lq E$ and $E\to R\lq E$ are universal principal $L$- and $R$-bundles
usually is the result of the fact that $G$ is a Lie group, $L$ and $R$ are its closed subgroups and the quotient map
$E\to G\lq E$ is a universal principal $G$-bundle. We consider this case in Section~\ref{The_case_of_a_compact_Lie_group}.
\end{remark}

\begin{remark}\label{remark:2}\rm If $R=L$ and $A:X\to G/L$ is defined by $A(x)=L$ for any $x\in X$, then $v_A$
is the canonical projection. In this case, the corresponding right action~(\ref{eq:i})
of $H^\bullet_L(\pt,\k)$ on $H_L^\bullet(X,\k)$ is canonical.
\end{remark}

\subsection{Pull-back as a bimodule homomorphism} Keeping the notation of the previous section, let additionally
$Y$ be another $G$-space and $f:Y\to X$ be an $L$-equivariant continuous map.
Then the composition $Af:Y\to G/R$ is also $L$-equivariant.

\begin{lemma}\label{lemma:pull-back}
The equivariant pull-back $f^\star:H^\bullet_L(X,\k)\to H^\bullet_L(Y,\k)$ is a homomorphism of bimodules,
where the left actions are canonical and the right actions on the domain and codomain
are twisted by $A$ and $Af$, respectively.
\end{lemma}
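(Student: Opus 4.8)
The plan is to verify the two required compatibilities separately: compatibility with the left $H^\bullet_L(\pt,\k)$-action (which is the canonical action on both sides) and compatibility with the right actions twisted by $A$ and $Af$ respectively. The left-module compatibility is immediate: the equivariant pull-back $f^\star = (f\,{}_L\!\times\,\id)^*$ commutes with $\can^*$ because the square relating the canonical projections $X\,{}_L\!\times\,E\to L\lq E$ and $Y\,{}_L\!\times\,E\to L\lq E$ through $f\,{}_L\!\times\,\id$ commutes on the nose (both composites send $L(y,e)$ to $Le$), and $f^\star$ preserves cup products. Hence $f^\star(cm) = f^\star(\can^*(c)\cup m) = \can^*(c)\cup f^\star(m) = c\,f^\star(m)$ for $c\in H^\bullet_L(\pt,\k)$.

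For the right-module compatibility the key point is the identity of twisted projections
\begin{equation*}
v_{Af} = v_A\circ(f\,{}_L\!\times\,\id)\colon Y\,{}_L\!\times\,E\to R\lq E.
\end{equation*}
This is a direct unwinding of formula~(\ref{eq:1}): the left-hand side sends $L(y,e)$ to $(Af)(y)^{-1}e = A(f(y))^{-1}e$, and the right-hand side sends $L(y,e)$ first to $L(f(y),e)$ and then to $A(f(y))^{-1}e$, which is the same element of $R\lq E$. Passing to cohomology gives $v_{Af}^* = (f\,{}_L\!\times\,\id)^*\circ v_A^* = f^\star\circ v_A^*$ as maps $H^\bullet_R(\pt,\k)\to H^\bullet_L(Y,\k)$. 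Note the isomorphism $\iota$ on $H^\bullet_R(\pt,\k)$ in~(\ref{eq:0}) is here the identity, since the domain and codomain of $f^\star$ use the \emph{same} group $R$ and the same space $E$; only the twisting map changes.

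With these two facts in hand the computation is routine: for $m\in H^\bullet_L(X,\k)$ and $b\in H^\bullet_R(\pt,\k)$,
\begin{equation*}
f^\star(mb) = f^\star\!\big(m\cup v_A^*(b)\big) = f^\star(m)\cup f^\star\!\big(v_A^*(b)\big) = f^\star(m)\cup v_{Af}^*(b) = f^\star(m)\,b,
\end{equation*}
using~(\ref{eq:i}) for both actions, the fact that $f^\star$ preserves the cup product, and the identity $v_{Af}^* = f^\star\circ v_A^*$. Together with the left-module compatibility and the obvious additivity of $f^\star$, this shows $f^\star$ is a bimodule homomorphism. I expect no serious obstacle here; the only thing to be careful about is bookkeeping the base space $E$ consistently — one must use the \emph{same} $E$ throughout so that $H^\bullet_R(\pt,\k)$ and the maps $v_A$, $v_{Af}$ are literally defined on the same spaces, which is legitimate by Lemma~\ref{lemma:2} and Corollary~\ref{corollary:1}.
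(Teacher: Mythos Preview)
Your proof is correct and follows essentially the same approach as the paper: the key identity $v_{Af}=v_A\circ(f\,{}_L\!\times\,\id)$ (equation~(\ref{eq:d}) in the paper) followed by the cup-product computation is exactly what the paper does, though you spell out the left-action compatibility and the unwinding of~(\ref{eq:1}) in slightly more detail than the paper, which simply declares the left case ``well-known'' and the identity ``direct from definition.''
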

\begin{proof}
As for the left actions, the result is well-known, we will prove only
the claim about the right actions. It follows directly from definition that
\begin{equation}\label{eq:d}
v_{Af}=v_A(f\,{}_L\!\!\times\,\id).
\end{equation}
Hence for any $m\in H^\bullet_L(X,\k)$ and $b\in H_R^\bullet(\pt,\k)$, we get
$$
f^\star(mb)=f^\star(m\cup v_A^*(b))=f^\star(m)\cup f^\star v_A^*(b)=f^\star(m)\cup v_{Af}^*(b)=f^\star(m)b.
$$
\end{proof}

\section{The tensor product morphism}\label{The_tensor_product_morphism}

In this section, we fix the main objects of this paper: the topological spaces $X$ and $Y$, the groups $G$,$L$,$R$,$P$,$Q$, and the map $\alpha:X\to G$.
\subsection{The embedding of Borel constructions}\label{The_embedding} Let again $L,R,P,Q$ be subgroups of a topological group $G$
such that $R\subset P$ and $Q\subset P$.
Let $X$ and $Y$ be topological spaces such that
{\renewcommand{\labelenumi}{{\rm\theenumi}}
\renewcommand{\theenumi}{{\rm(\roman{enumi})}}
\begin{enumerate}
\itemsep=3pt
\item\label{p:1} $L$ acts on the left on $X$;
\item\label{p:2} $R$ acts on the right on $X$;
\item\label{p:3} $Q$ acts on the left on $Y$.
\end{enumerate}}
\noindent
Assuming that $R$ and $Q$ act on $P$ on the left and on the right respectively
via multiplication, we can write these data as follows:
$$
L\circlearrowright X\circlearrowleft R\circlearrowright P\circlearrowleft Q\circlearrowright Y.
$$
We also assume that 
{\renewcommand{\labelenumi}{{\rm\theenumi}}
\renewcommand{\theenumi}{{\rm(\roman{enumi})}}
\begin{enumerate}\setcounter{enumi}{3}
\item\label{p:4} the actions of $L$ and $R$ on $X$ commute. 
\end{enumerate}}
\noindent
Let additionally
{\renewcommand{\labelenumi}{{\rm\theenumi}}
\renewcommand{\theenumi}{{\rm(\roman{enumi})}}
\begin{enumerate}\setcounter{enumi}{4}
\item\label{p:5} $\alpha:X\to G$ be a continuous $R$-$L$-eqivariant map.
\end{enumerate}}
\noindent
In that case, we get the morphism of left $L$-spaces $A:X/R\to G/R$ given by
\begin{equation}\label{eq:tw5:6}
A(xR)=\alpha(x)R.
\end{equation}
Thus we are in the situation of Section~\ref{Twisted action} (with $X$ replaced by $X/R$)
and we have the map $v_A:(X/R)\,{}_L\!\!\times\,E\to R\lq E$ given by~(\ref{eq:1}).

Let $E$ be a left $G$-space. In view of~\ref{p:4}, we can define the left action of $L$ on
$X\mathop{\times}\limits_R P\mathop{\times}\limits_QY$ by~(\ref{eq:left}), that is,
$
l[x:p:y]=[lx:p:y]
$.
We consider the map
$$
\phi:\(X\mathop{\times}\limits_R P\mathop{\times}\limits_QY\)\rtimes{L}E\to((X/R)\rtimes{L}E)\times(Y\rtimes{Q}E)
$$
defined by
\begin{equation}\label{eq:5}
L([x:p:y],e)\mapsto\big(L(xR,e),Q(y,p^{-1}\alpha(x)^{-1}e)\big).
\end{equation}
The reader can easily check that this map is well-defined and continuous.

We get the following diagram:
%
\begin{equation}\label{eq:c}
\begin{tikzcd}
\(X\mathop{\times}\limits_R P\mathop{\times}\limits_Q Y\)\rtimes{L} E\arrow{r}{\phi}&((X/R)\rtimes{L}E)\times(Y\rtimes{Q}E)\arrow{r}{\pr_1}\arrow{d}[swap]{\pr_2}&(X/R)\rtimes{L}E\arrow{d}{v_A}\\
&Y\rtimes{Q}E\arrow{d}[swap]{\can}&R\lq E\arrow{d}{\pi_R}\\
&Q\lq E\arrow{r}{\pi_Q}&P\lq E
\end{tikzcd}
\end{equation}
where $\pi_R$ and $\pi_Q$ are the natural quotient maps. In general, the rectangle on the right is not commutative.
But we get the following result.

\begin{lemma}\label{lemma:c}
The two maximal paths of diagram~(\ref{eq:c}) commute, that is,
$$\pi_R\,v_A\pr_1\phi=\pi_Q \can \pr_2\phi.$$
\end{lemma}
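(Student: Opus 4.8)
The plan is to trace an arbitrary point $L([x:p:y],e)$ of $\(X\times_R P\times_Q Y\)\rtimes{L} E$ along both maximal paths of diagram~(\ref{eq:c}) and verify that the two results agree in $P\lq E$. First I would compute the upper path: applying $\phi$ gives $\big(L(xR,e),Q(y,p^{-1}\alpha(x)^{-1}e)\big)$ by~(\ref{eq:5}); then $\pr_1$ yields $L(xR,e)$; then $v_A$ sends this to $A(xR)^{-1}e=(\alpha(x)R)^{-1}e=R\alpha(x)^{-1}e$ by~(\ref{eq:1}) and~(\ref{eq:tw5:6}); finally $\pi_R$ maps it to $P\alpha(x)^{-1}e$. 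For the lower path: $\phi$ then $\pr_2$ gives $Q(y,p^{-1}\alpha(x)^{-1}e)$; then $\can$ sends this orbit to $Q(p^{-1}\alpha(x)^{-1}e)$ in $Q\lq E$; finally $\pi_Q$ maps it to $P p^{-1}\alpha(x)^{-1}e$.

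So the equality to verify is $P\alpha(x)^{-1}e = P p^{-1}\alpha(x)^{-1}e$ in $P\lq E$, i.e.\ that $P\alpha(x)^{-1} = Pp^{-1}\alpha(x)^{-1}$ as elements of $P\lq G$ acting on $e$. Since $p\in P$ and $P$ is a subgroup, $Pp^{-1}=P$, and hence $Pp^{-1}\alpha(x)^{-1}=P\alpha(x)^{-1}$; applying both sides to $e$ gives the claim. This is the entire content of the lemma, so the proof is short.

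The one genuine point requiring care — the main (modest) obstacle — is checking that every object manipulated along the way is well defined, i.e.\ independent of the chosen representative $(x,p,y)$ of the class $[x:p:y]$ and of $([x:p:y],e)$ of its $L$-orbit. Well-definedness of $\phi$ is already granted in the text, but I would still want to note that the final identity is representative-independent: replacing $(x,p,y)$ by $(xr, r^{-1}pq, q^{-1}y)$ for $r\in R$, $q\in Q$ changes $P\alpha(x)^{-1}e$ to $P\alpha(xr)^{-1}e = P(\alpha(x)r)^{-1}e = Pr^{-1}\alpha(x)^{-1}e = P\alpha(x)^{-1}e$ using $R\subset P$, $\alpha(xr)=\alpha(x)r$, and similarly on the other side using $Q\subset P$; and replacing $e$ by $le$, $x$ by $lx$ changes $P\alpha(x)^{-1}e$ to $P\alpha(lx)^{-1}le = P(l\alpha(x))^{-1}le = P\alpha(x)^{-1}e$ using $\alpha(lx)=l\alpha(x)$. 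Having recorded these compatibilities, the commutativity identity $\pi_R\,v_A\,\pr_1\,\phi = \pi_Q\,\can\,\pr_2\,\phi$ follows at once from the computation above.
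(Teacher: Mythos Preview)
Your proposal is correct and follows exactly the same approach as the paper: the paper's proof is the single sentence ``The result follows directly from~(\ref{eq:5}),'' and you have simply unpacked that computation explicitly. The well-definedness checks you include are not needed (well-definedness of $\phi$ is already asserted in the text), but they do no harm.
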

\begin{proof}
The result follows directly from~(\ref{eq:5}).
\end{proof}

Now we are going to study the image of $\phi$.

\begin{lemma}\label{lemma:1}
Suppose that $P$ acts freely on $E$. Then $\phi$ is an embedding
whose image coincides with 
$
((X/R)\,{}_L\!\!\times\,E)\times_{\pi_R\,v_A\pr_1=\pi_Q\can\pr_2}(Y\,{}_Q\!\!\times\,E)
$.
\end{lemma}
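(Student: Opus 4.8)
I would show that $\phi$ is a continuous bijection onto the fibre product
$F := ((X/R)\rtimes{L}E)\times_{\pi_R v_A\pr_1 = \pi_Q\can\pr_2}(Y\rtimes{Q}E)$
and then invoke Proposition~\ref{proposition:1} to upgrade it to a homeomorphism (which, being a homeomorphism onto a subspace, is an embedding). That $\phi$ lands in $F$ is exactly Lemma~\ref{lemma:c}. So the work splits into three parts: (a) the image of $\phi$ is all of $F$ (surjectivity onto $F$); (b) $\phi$ is injective; (c) local sections of $\phi$ exist over an open cover of $F$.

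**Surjectivity onto $F$.** A point of $F$ is a pair $\big(L(x_0R, e_0),\, Q(y_0, e_1)\big)$ with $\pi_R(\alpha(x_0)^{-1}e_0) = \pi_Q(e_1)$ in $P\lq E$, i.e. $P\alpha(x_0)^{-1}e_0 = Pe_1$. Hence there is $p_1\in P$ with $e_1 = p_1\alpha(x_0)^{-1}e_0$. I claim $L([x_0 : p_1^{-1} : y_0], e_0)$ maps to this pair: indeed $\phi$ sends it to $\big(L(x_0R,e_0),\, Q(y_0, (p_1^{-1})^{-1}\alpha(x_0)^{-1}e_0)\big) = \big(L(x_0R,e_0), Q(y_0,e_1)\big)$, as required. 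The freeness of $P$ on $E$ is not needed here; it enters only in (b) and (c).

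**Injectivity.** Suppose $\phi\big(L([x:p:y],e)\big) = \phi\big(L([x':p':y'],e')\big)$. From the first component, $L(xR,e) = L(x'R,e')$, so after replacing the second representative by an $L$-translate (which changes nothing on the left side of the quotient product) we may assume $e = e'$ and $xR = x'R$, say $x' = xr$ for some $r\in R$; using the relation $[x':p':y'] = [x'r^{-1} : rp' : y']$ we may further normalize so that $x' = x$. Now the second components give $Q(y, p^{-1}\alpha(x)^{-1}e) = Q(y', p'^{-1}\alpha(x)^{-1}e)$, so there is $q\in Q$ with $y' = q^{-1}y$ and $p'^{-1}\alpha(x)^{-1}e = q^{-1}p^{-1}\alpha(x)^{-1}e$. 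Here I use that $P$ acts \emph{freely} on $E$: since $p'^{-1}$ and $q^{-1}p^{-1}$ both lie in $P$ and act identically on the point $\alpha(x)^{-1}e\in E$, we get $p'^{-1} = q^{-1}p^{-1}$, i.e. $p' = pq$. But then $[x:p:y] = [x : pq : q^{-1}y] = [x':p':y']$, so the two points of $(X\rtimes{R}P\rtimes{Q}Y)\rtimes{L}E$ coincide. The one subtlety to spell out is why $\alpha(x)^{-1}e$ makes sense as a point of $E$: $\alpha(x)\in G$ and $E$ is a $G$-space, so this is literal. (One should double-check that freeness of $P$ on $E$ is genuinely available — it will be, in the intended application, from $E$ being a universal principal $G$-bundle total space with $P\subset G$ closed.)

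**Local sections.** For the homeomorphism step I need, around each point of $F$, an open $U\subset F$ and a continuous $g:U\to (X\rtimes{R}P\rtimes{Q}Y)\rtimes{L}E$ with $\phi g = \id_U$. The natural strategy: the map $\pr_2:((X/R)\rtimes{L}E)\times(Y\rtimes{Q}E)\to Y\rtimes{Q}E$ composed with $\can$ gives $Y\rtimes{Q}E\to Q\lq E$, and by Proposition~\ref{proposition:pb:1} (applied to the principal $Q$-bundle $E\to Q\lq E$, which is available since $E\to G\lq E$ is principal and $Q$ is closed) this is a fibre bundle; similarly $v_A:(X/R)\rtimes{L}E\to R\lq E$ is (a composite with) a fibre bundle. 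Over a point of $F$ one can choose a local trivialization of the $Q$-bundle $E\to Q\lq E$ near the relevant point; this produces a continuous choice of a representative $e$ and, reading off the $Q$-coordinate, a continuous recovery of $p^{-1}\alpha(x)^{-1}e$ up to $Q$, hence — after pinning $e$ down via the trivialization — a continuous recovery of the actual element $p\in P$ that was used (freeness of $P$ again guarantees uniqueness, local trivialization gives continuity). Feeding $(xR, e)$ from the first factor, the recovered $p$, and $y$ from the second factor back through the normalized formula $L([x:p^{-1}\cdot(\text{adjustment}):y], e)$ produces the section $g$. I expect \textbf{this local-section construction to be the main obstacle}: the bookkeeping of which representatives are chosen continuously — reconciling the trivialization of $E\to Q\lq E$ used to fix $e$ with the $L$-quotient on the first factor — is where one must be careful, and it is the only place where more than formula-chasing is required. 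Everything else (well-definedness, the two quotient-product relations, continuity of $\phi$) is routine and was already asserted in the text.
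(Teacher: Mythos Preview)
Your parts (a) and (b) are correct and match the paper's argument essentially line for line: injectivity via the freeness of $P$ on $E$, and surjectivity onto the fibre product by solving $P\alpha(x)^{-1}e = Pe'$ for an element of $P$.

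Your part (c), however, is not part of this lemma at all. In the paper, ``embedding'' in Lemma~\ref{lemma:1} means only \emph{injection}; the upgrade to a \emph{topological} embedding is the content of the separate Theorem~\ref{theorem:2}, which carries the strictly stronger hypothesis that $E\to P\lq E$ is a principal $P$-bundle (not merely that $P$ acts freely). The paper's proof of Lemma~\ref{lemma:1} stops after establishing injectivity and identifying the image, and Theorem~\ref{theorem:2} then invokes Lemma~\ref{lemma:1} together with Proposition~\ref{proposition:1} after constructing the local sections. So you have over-read the statement and are attempting to prove Theorem~\ref{theorem:2} inside Lemma~\ref{lemma:1}, under a hypothesis (free $P$-action) that is too weak for the local-section argument you sketch.

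Even setting scope aside, your sketch of (c) would not go through as written: you propose to trivialize the $Q$-bundle $E\to Q\lq E$, but the element you need to recover continuously is $p\in P$, and for that one needs a local trivialization of the $P$-bundle $E\to P\lq E$ --- exactly the hypothesis of Theorem~\ref{theorem:2}. The paper's construction there uses the $P$-equivariant coordinate $\p:\pi^{-1}(U)\to P$ coming from such a trivialization to write down the explicit section $\xi(d)=L\big([x:\p(\alpha(x)^{-1}e)\p(e')^{-1}:y],e\big)$; neither a $Q$-trivialization nor freeness alone produces this.
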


\begin{proof}
Let $L([x:p:y],e)$ and $L([x':p':y'],e')$ be two orbits mapped to the same pair by $\phi$.
As $L(xR,e)=L(x'R,e')$, there exists $l\in L$ such that $lxR=x'R$ and $le=e'$.
From the first equality, it follows that there exists $r\in R$
such that $lxr=x'$. We get
\begin{equation}\label{eq:tw2:1}
\begin{array}{c}
\hspace{-140pt}L([x:p:y],e)=L([lx:p:y],le)=L([lxr:r^{-1}p:y],e')\\[3pt]
\hspace{160pt}
\begin{tikzcd}
=L([x':r^{-1}p:y],e')\arrow[mapsto]{r}{\pr_2\phi}&Q(y,p^{-1}r\alpha(x')^{-1}e').
\end{tikzcd}
\end{array}
\end{equation}
As $L([x':p':y'],e')$ is mapped by $\pr_2\phi$ to the same orbit, we get
$$
Q(y,p^{-1}r\alpha(x')^{-1}e')=Q(y',(p')^{-1}\alpha(x')^{-1}e').
$$
Therefore, there exists $q\in Q$ such that $qy=y'$
and $qp^{-1}r\alpha(x')^{-1}e'=(p')^{-1}\alpha(x')^{-1}e'$. From the last equality and
the freeness of the action of $P$ on $E$, we get $qp^{-1}r=(p')^{-1}$.
Hence $r^{-1}p=p'q$ and, applying~(\ref{eq:tw2:1}), we get
\begin{multline*}
L([x:p:y],e)=L([x':r^{-1}p:y],e')=L([x':p'q:y],e')\\
=L([x':p':qy],e')=L([x':p':y'],e')
\end{multline*}
as required.

Let us prove now the claim about the image. It follows from Lemma~\ref{lemma:c}
that $\im\phi$ is contained in the fibre product. Let
$a=(L(xR,e),Q(y,e'))$ be its arbitrary point.
Hence $P\alpha(x)^{-1}e=Pe'$. Thus there exists an element $p\in P$ such that $pe'=\alpha(x)^{-1}e$. We get
$\phi(L([x:p:y],e))=a$.
\end{proof}

\begin{theorem}\label{theorem:2}
Suppose that the quotient map $E\to P\lq E$ is a principal $P$-bundle. Then $\phi$ is a topological embedding.
\end{theorem}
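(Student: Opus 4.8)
The plan is to upgrade Lemma~\ref{lemma:1} from a set-theoretic bijection onto the fibre product to a genuine topological embedding by applying Proposition~\ref{proposition:1} to the corestriction of $\phi$ onto its image. Since Lemma~\ref{lemma:1} already tells us that $\phi$ is a continuous bijection onto the fibre product $((X/R)\rtimes{L}E)\times_{\pi_R v_A\pr_1=\pi_Q\can\pr_2}(Y\rtimes{Q}E)$ once $P$ acts freely on $E$ (and being a principal $P$-bundle certainly implies freeness of the action), it suffices to produce an open covering of the fibre product together with continuous local sections of $\phi$. I would then conclude that $\phi$ is a homeomorphism onto its image, hence a topological embedding.

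First I would fix a point of the fibre product, say $a=(L(xR,e),Q(y,e'))$ with $P\alpha(x)^{-1}e=Pe'$. Using that $E\to P\lq E$ is a principal $P$-bundle, choose an open neighbourhood $U$ of the point $Pe'=P\alpha(x)^{-1}e$ in $P\lq E$ together with a $P$-equivariant trivialisation $h:U\times P\ito \pi^{-1}(U)$, where $\pi:E\to P\lq E$ is the quotient map; write $\rho=\pr_2 h^{-1}:\pi^{-1}(U)\to P$ for the associated continuous map satisfying $\rho(ep)=\rho(e)p$ and $e=h(Pe,1)\rho(e)$. Now I pull this neighbourhood back: let $W$ be the set of points $(L(xR,e),Q(y,e'))$ in the fibre product for which $\pi_Q\can\pr_2$ (equivalently $\pi_R v_A\pr_1$) lands in $U$; this $W$ is open, contains $a$, and these sets cover the fibre product as $a$ varies. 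On $W$ I would define a candidate section $g:W\to (X\rtimes{R}P\rtimes{Q}Y)\rtimes{L}E$ by a formula of the shape
$$
g\big(L(xR,e),Q(y,e')\big)=L\big([x:\rho\big(\alpha(x)^{-1}e\big)\rho(e')^{-1}:y],e\big),
$$
the point being that the middle coordinate $p=\rho(\alpha(x)^{-1}e)\rho(e')^{-1}\in P$ is exactly an element with $pe'=\alpha(x)^{-1}e$, matching the existence argument in the proof of Lemma~\ref{lemma:1}. The remaining work is routine: check that this formula does not depend on the chosen representatives $(x,e)$ and $(y,e')$ (using $R$-$L$-equivariance of $\alpha$ and the equivariance properties of $\rho$), that it is continuous (being built from $h^{-1}$, $\alpha$ and quotient maps), and that $\phi g=\id_W$ (immediate from~(\ref{eq:5}) once $pe'=\alpha(x)^{-1}e$).

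The main obstacle I anticipate is the well-definedness of the local section $g$: one must verify that replacing the representative $(x,e)$ of $L(xR,e)$ by $(lx,le)$ and the representative $(y,e')$ of $Q(y,e')$ by $(qy,qe')$, and also changing $x$ within its $R$-coset, leaves the output orbit $L([x:p:y],e)$ unchanged. This requires carefully tracking how $p=\rho(\alpha(x)^{-1}e)\rho(e')^{-1}$ transforms: under $e\mapsto le$, $x\mapsto lx$ one uses $\alpha(lx)=l\alpha(x)$ so $\alpha(x)^{-1}e$ is unchanged; under $x\mapsto xr$ one has $\alpha(xr)^{-1}e=r^{-1}\alpha(x)^{-1}e$ so $p\mapsto r^{-1}p$, which is absorbed by the relation $(xr,p)\sim(x,rp)$ defining $\rtimes{R}P$; under $e'\mapsto qe'$ with $y\mapsto q^{-1}y$ one gets $\rho(e')\mapsto\rho(e')$ only up to an element of $Q$, namely $\rho(qe')=\rho(e')\cdot(\text{something})$ is not quite right, so one rather uses $h^{-1}$ on $qe'$ versus $e'$ directly and absorbs the discrepancy through the relation $[x:pq':y]=[x:p:q'y]$ of $\rtimes{Q}Y$. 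Getting all three compatibilities to close up cleanly is where the care lies; once that is done the rest follows mechanically from Proposition~\ref{proposition:1}.
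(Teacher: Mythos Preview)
Your approach is essentially identical to the paper's: same reduction to Lemma~\ref{lemma:1} plus Proposition~\ref{proposition:1}, and the very same local section formula $L\big([x:\p(\alpha(x)^{-1}e)\p(e')^{-1}:y],e\big)$ built from a trivialization of the principal $P$-bundle. The only slip is that $P$ acts on $E$ on the \emph{left} in this paper, so the trivialization should be $h:P\times U\ito\pi^{-1}(U)$ with $\p=\pr_1 h^{-1}$ satisfying $\p(pe)=p\,\p(e)$; with this convention your worried-about $Q$-compatibility becomes simply $\p(qe')=q\,\p(e')$, and the well-definedness check closes up without difficulty.
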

\begin{proof} We denote this quotient map by $\pi$ and by $D$ the fibre product from the formulation of Lemma~\ref{lemma:1}.
Let $a$ be any point of $D$. The assumption of this lemma implies that there exists
an open neighborhood $U\subset P\lq E$ of the point $\pi_Rv_A\pr_1(a)=\pi_Q\can\pr_2(a)$
and a $P$-equivariant homeomorphism $h:P \times U\ito\pi^{-1}(U)$ such that the diagram
$$
\begin{tikzcd}
P\times U\arrow{rr}{h}[swap]{\sim}\arrow{rd}[swap]{\pr_2}&&\arrow{ld}{\pi}\pi^{-1}(U)\\
&U
\end{tikzcd}
$$
is commutative.
We denote $\p=\pr_1 h^{-1}$. Clearly, $\p(pe)=p\p(e)$ for any $p\in P$ and $e\in\pi^{-1}(U)$.

We define the continuous function
$$
\xi:(\pi_Rv_A)^{-1}(U)\times(\pi_Q\can)^{-1}(U)\to (X\mathop{\times}\limits_R P\mathop{\times}\limits_QY)\,{}_L\!\!\times\,E
$$ as follows. Let $d=(L(xR,e),Q(y,e'))$ be a point of 
the domain of $\xi$.
We set
$$
\xi(d)=L\big([x:\p(\alpha(x)^{-1}e)\p(e')^{-1}:y],e\big).
$$
One can easily check that this definition does not depend on the choice of $x,y,e,e'$
in the representation of $d$.

Now suppose that $d\in D$. Then we get $P\alpha(x)^{-1}e=Pe'$.
Let us write $\alpha(x)^{-1}e=pe'$ for the corresponding $p\in P$.
We get
\begin{multline*}
\phi\xi(d)=\Big(L(xR,e),Q\big(y ,\p(e')\p(\alpha(x)^{-1}e)^{-1}\alpha(x)^{-1}e\big)\Big)
=\big(L(xR,e),Q\big(y ,\p(e')\p(pe')^{-1}pe'\big)\big)\\
=\big(L(xR,e),Q\big(y ,\p(e')\p(e')^{-1}p^{-1}pe'\big)\big)
=\big(L(xR,e),Q\big(y ,e'\big)\big)=d.
\end{multline*}
This calculation proves that the restriction of $\xi$ to $(\pi_Rv_A)^{-1}(U)\times(\pi_Q\can)^{-1}(U)\cap D$
inverts $\phi$. As $a$ belongs to the last subset, all such subsets cover $D$.
Thus the required claim follows from Lemma~\ref{lemma:1} and Proposition~\ref{proposition:1}.
\end{proof}

\subsection{Canonical and twisted projections for the triple product}\label{Canonical and twisted projections for the triple product}
By~(\ref{eq:5}), we get the following commutative diagram:
\begin{equation}\label{eq:6}
\begin{tikzcd}
\(X\mathop{\times}\limits_R P\mathop{\times}\limits_Q Y\)\;{}_L\!\times E\arrow{r}{\phi}\arrow{d}[swap]{\can}&((X/R)\,{}_L\!\times E)\times(Y\,{}_Q\!\times\,E)\arrow{d}{\pr_1}\\
L\lq E&\arrow{l}{\can}((X/R)\,{}_L\!\times E)
\end{tikzcd}
\end{equation}

Now let $M$ be a subgroup of $G$ and $B:Y\to G/M$ be a $Q$-equivariant continuous map.
We consider the map $\alpha B:X\mathop{\times}\limits_R P\mathop{\times}\limits_Q Y\to G/M$
given by
\begin{equation}\label{eq:tw:7}
[x:p:y]\mapsto \alpha(x)p B(y).
\end{equation}
It is easy to check that this map is well-defined, continuous and $L$-equivariant. By~(\ref{eq:5}), we get the following commutative diagram:
\begin{equation}\label{eq:7}
\begin{tikzcd}
\(X\mathop{\times}\limits_R P\mathop{\times}\limits_Q Y\)\;{}_L\!\times E\arrow{r}{\phi}\arrow{d}[swap]{v_{\alpha B}}&((X/R)\,{}_L\!\times E)\times(Y\,{}_Q\!\times\,E)\arrow{d}{\pr_2}\\
M\lq E&\arrow{l}{v_B}Y\,{}_Q\!\!\times\,E
\end{tikzcd}
\end{equation}

\subsection{The difference}In this section, we want to study the difference
$$
((X/R)\,{}_L\!\!\times\,E)\times(Y\,{}_Q\!\!\times\,E)-\im\phi.
$$
To this end, we will claim some additional properties of $E$.

%
%

\begin{lemma}\label{lemma:3}
Suppose that $P$ acts freely on $E$.
Let $U$ be a compact Lie group acting continuously on $E$ on the right
so that this action and the left action of $G$ commute.
Suppose that the space $R\lq E$ is Hausdorff and is acted upon by $U$ transitively.
Then the projection to the first component
$$
\omega:((X/R)\,{}_L\!\!\times\,E)\times(Y\,{}_Q\!\!\times\,E)-\im\phi\to(X/R)\,{}_L\!\!\times\,E
$$
is a fibre bundle with fibre homeomorphic to
$$
F_{\bar e}=\{Q(y,e)\in Y\,{}_Q\!\!\times\,E\suchthat \bar e\ne Pe\}=Y\,{}_Q\!\!\times\,E-(\pi_Q\can)^{-1}(\bar e),
$$
where $\bar e$ is an arbitrary point of $P\lq E$.
\end{lemma}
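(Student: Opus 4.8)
The plan is to show $\omega$ is a fibre bundle by producing local trivializations over a suitable open cover of the base $(X/R)\,{}_L\!\times\,E$, using the transitive $U$-action on $R\lq E$ to move fibres around. First I would fix a point $\zeta_0=L(x_0R,e_0)$ in the base and consider the point $\bar e_0:=\pi_R v_A(\zeta_0)=\pi_R(A(x_0R)^{-1}e_0)=P\alpha(x_0)^{-1}e_0\in P\lq E$. The fibre $\omega^{-1}(\zeta_0)$ consists of those $(\zeta_0,Q(y,e'))$ with the pair \emph{not} in $\im\phi$; by Lemma~\ref{lemma:1} (whose hypothesis ``$P$ acts freely on $E$'' is in force), membership in $\im\phi$ is exactly $\pi_R v_A(\zeta_0)=\pi_Q\can(Q(y,e'))$, i.e. $Pe'=\bar e_0$. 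Hence $\omega^{-1}(\zeta_0)\cong\{Q(y,e')\in Y\,{}_Q\!\times\,E\suchthat Pe'\ne\bar e_0\}=Y\,{}_Q\!\times\,E-(\pi_Q\can)^{-1}(\bar e_0)=F_{\bar e_0}$, which already identifies the fibre abstractly and shows (using that $U$ acts transitively on $R\lq E$, hence on $P\lq E$, so any two $F_{\bar e}$ are related by a $U$-translate and thus homeomorphic) that all fibres are homeomorphic to the $F_{\bar e}$ in the statement.

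The substantive step is local triviality. Over a neighbourhood of $\zeta_0$ I want a homeomorphism $\omega^{-1}(W)\cong W\times F_{\bar e_0}$. The idea is to use a continuous ``section'' of the map $\pi_R v_A$ valued in $U$: since $R\lq E$ is Hausdorff and $U$ is a compact Lie group acting transitively on it, the orbit map $U\to R\lq E$, $u\mapsto \bar r_0 u$ (for a fixed basepoint $\bar r_0$) admits local continuous sections, and composing with the continuous map $\zeta\mapsto v_A(\zeta)\in R\lq E$ gives, on a neighbourhood $W$ of $\zeta_0$, a continuous map $u:W\to U$ with $v_A(\zeta)=v_A(\zeta_0)\,u(\zeta)$ (after normalizing so $u(\zeta_0)=1$). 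Applying $\pi_R$, we get $\pi_R v_A(\zeta)=\bar e_0\, u(\zeta)$ in $P\lq E$. Now define
$$
\Psi:W\times F_{\bar e_0}\to\omega^{-1}(W),\qquad \Psi\big(\zeta,Q(y,e')\big)=\big(\zeta,\,Q(y,e'\,u(\zeta))\big).
$$
Right multiplication by $u(\zeta)\in U$ commutes with the left $Q$-action on $E$ (as $U$ and $G$ commute), so $Q(y,e'u(\zeta))$ is well-defined; and $Pe'\ne\bar e_0\iff Pe'u(\zeta)\ne\bar e_0 u(\zeta)=\pi_R v_A(\zeta)$, so $\Psi$ indeed lands in $\omega^{-1}(W)$ and is a bijection onto it with inverse $\big(\zeta,Q(y,e)\big)\mapsto\big(\zeta,Q(y,e\,u(\zeta)^{-1})\big)$. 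Both $\Psi$ and its inverse are continuous since $u$ is; this is the required local trivialization, and $\pr_1\Psi=\omega$ by construction.

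The main obstacle I expect is the clean construction of the local $U$-valued map $u:W\to U$ with $v_A(\zeta)=v_A(\zeta_0)u(\zeta)$: this requires that the continuous transitive action of the compact Lie group $U$ on the Hausdorff space $R\lq E$ be a locally trivial principal $\Stab$-bundle, i.e. that $R\lq E\cong U/\Stab_U(\bar r_0)$ as $U$-spaces with local sections — a standard fact for compact Lie groups acting on Hausdorff spaces, but one that must be invoked carefully, and it is exactly why the hypotheses ``$U$ compact Lie'', ``transitive'', and ``$R\lq E$ Hausdorff'' appear. Once that section is in hand, pull it back along the continuous map $v_A:(X/R)\,{}_L\!\times\,E\to R\lq E$ to get $u$ on a neighbourhood $W$ of any prescribed $\zeta_0$; the rest is the routine verification above. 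Finally, these $W$ cover the base, so $\omega$ is a fibre bundle with fibre $F_{\bar e}$.
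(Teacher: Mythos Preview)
Your proposal is correct and follows essentially the same route as the paper: identify each fibre via Lemma~\ref{lemma:1}, use transitivity of $U$ on $R\lq E$ (hence on $P\lq E$) to see all $F_{\bar e}$ are homeomorphic, obtain a local section of the orbit map $U\to R\lq E$ from the fact that $U/\Stab\to R\lq E$ is a homeomorphism (compact to Hausdorff) and $U\to U/\Stab$ is a principal bundle, and then trivialize by right-multiplying the second factor by the resulting $U$-valued function pulled back along $v_A$. Your map $\Psi$ and its inverse match the paper's $\sigma$ and its inverse verbatim up to notation.
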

\begin{proof} We will denote the action of $U$ by $\cdot$.
We make $U$ act on $Y\,{}_Q\!\times\,E$ on the right by the rule
$Q(y,e)\cdot u=Q(y,e\cdot u)$. Then we obviously get $F_{\bar e}\cdot u=F_{\bar e\cdot u}$. As $U$ acts transitively on $R\lq E$,
it also acts transitively on $P\lq E$.
Therefore, the spaces $F_{\bar e}$ are homeomorphic for different points $\bar e$.
We will rely on the notation of diagram~(\ref{eq:c}).

Let $a\in(X/R)\,{}_L\!\!\times\,E$ be an arbitrary point.
By Lemma~\ref{lemma:1}, we get
$$
\omega^{-1}(a)=\{a\}\times F_{\pi_Rv_A(a)}\cong F_{\pi_Rv_A(a)}\cong F_{\bar e}.
$$

We consider the map $t:U\to R\lq E$ defined by $t(u)=v_A(a)u$.
Let $V$ be the stabilizer of $v_A(a)$ in $U$. Note that $V$ is a Lie group
as it is closed in $U$. Let us consider the commutative diagram
$$
\begin{tikzcd}
{}&U\arrow{rd}{t}\arrow{ld}[swap]{\text{quotient map}}\\
V\lq U\arrow{rr}&&R\lq E
\end{tikzcd}
$$
The bottom map, which is given by $Vu\mapsto v_A(a)\cdot u$, is continouous and bijective.
It is a homeomorphism being a map from a compact space to a Hausdorff space.
As the natural projection $U\to V\lq U$ is a principal
$V$-bundle, it has a continous section in an open neighbourhood of any point of $V\lq U$.
From the diagram above, we get the same property for the map $t$.
In particular, there exist an open neighbourhood $W$ of $v_A(a)$ in $R\lq E$ and
a continuous section $s:W\to U$ of $t$. In other words, we have
\begin{equation}\label{eq:3}
w=t(s(w))=v_A(a)\cdot s(w)
\end{equation}
for any $w\in W$.

Let $H=v_A^{-1}(W)$. It is an open subset of $(X/R)\,{}_L\!\!\times\,E$ containing $a$. We construct the map
$\sigma:H\times\omega^{-1}(a)\to((X/R)\,{}_L\!\!\times\,E)\times(Y\,{}_Q\!\!\times\,E)$ by
$$
\Big(h,\big(a,Q(y,e)\big)\Big)\mapsto\Big(h,Q\big(y,e\cdot sv_A(h)\big)\Big).
$$
This map is obviously well-defined and continuous.
Let us prove that $\im\sigma\cap\im\phi=\emptyset$. By Lemma~\ref{lemma:1}, we have to prove the inequality
\begin{equation}\label{eq:4}
\pi_Rv_A(h)\ne\pi_Q(  Qe\cdot sv_A(h) )=Pe\cdot sv_A(h).
\end{equation}
As the $G$- and $U$-actions on $E$ commute, the map $\pi_R$ is $U$-equivariant. Therefore~(\ref{eq:4})
is equivalent to
$$
\pi_R\big(v_A(h)\cdot sv_A(h)^{-1}\big)\ne Pe.
$$
Applying~(\ref{eq:3}) for $w=v_A(h)$, to compute the left-hand side, we get the equivalent inequality
$$
\pi_Rv_A(a)\ne Pe,
$$
which holds by Lemma~\ref{lemma:1}, as $(a,Q(y,e))\notin\im\phi$. Thus we can consider $\sigma$ as a continuous map
from $H\times\omega^{-1}(a)$ to $\omega^{-1}(H)$. The diagram
$$
\begin{tikzcd}
H\times\omega^{-1}(a)\arrow{rr}{\sigma}\arrow{rd}[swap]{\pr_1}&&\omega^{-1}(H)\arrow{ld}{\omega}\\
&H
\end{tikzcd}
$$
is obviously commutative.

To prove that $\sigma$ is a homeomorphism, we notice that its inverse map $\omega^{-1}(H)\to H\times\omega^{-1}(a)$
is given by
$$
\big(h,  Q(y,e)  \big)\mapsto\Big(h,\big(a,Q(y,e\cdot sv_A(h)^{-1})\big)\Big).
$$
\end{proof}

Let us look more closely at the fibre $F_{\bar e}$. We will establish the vanishing theorem for its odd degree cohomologies
with compact support under some restrictions on the spaces entering into our initial construction.
We will need the following result.

\begin{lemma}\label{lemma:4}
Suppose that the quotient map $E\to P\lq E$ is a principal $P$-bundle.
Then the composition
$$
\begin{tikzcd}
Y\,{}_Q\!\times\,E\arrow{r}{\can}&Q\lq E\arrow{r}{\pi_Q}&P\lq E.
\end{tikzcd}
$$
is a fibre bundle with fibre $Y\,{}_Q\times P$.
\end{lemma}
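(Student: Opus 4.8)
The plan is to exhibit the composite $\pi_Q\can$ as a fibre bundle by producing, over a neighbourhood of each point of $P\lq E$, an explicit trivializing homeomorphism, and then invoking Proposition~\ref{proposition:1} to upgrade a continuous bijection to a homeomorphism. The key observation is that this composite factors through the canonical projection $\can:Y\,{}_Q\times\,E\to Q\lq E$, and $\pi_Q:Q\lq E\to P\lq E$ is itself the canonical projection $\pt\,{}_Q\times\,(P\lq E)\to P\lq E$ once we know $E\to P\lq E$ is a principal $P$-bundle. So the strategy is to first trivialize $\pi_Q$ locally using the principal $P$-bundle structure, and then pull back and refine the trivialization for the map on total spaces of the $Y$-construction.

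First I would fix $\bar e_0\in P\lq E$ and choose, by the principal $P$-bundle hypothesis, an open neighbourhood $U\subset P\lq E$ of $\bar e_0$ together with a $P$-equivariant homeomorphism $h:P\times U\ito\pi^{-1}(U)$ commuting with the projections to $U$. As in the proof of Theorem~\ref{theorem:2}, set $\p=\pr_1 h^{-1}:\pi^{-1}(U)\to P$, so that $\p(pe)=p\,\p(e)$. Next I would write down the candidate trivialization
$$
\Psi:(Y\,{}_Q\!\times\,P)\times U\to (\pi_Q\can)^{-1}(U),\qquad \big(Q(y,p'),\,\bar e\big)\mapsto Q\big(y,\,p'\,h(1,\bar e)\big)\cdot\text{(appropriate twist)},
$$
where I will need a chosen continuous lift of $\bar e$ into $\pi^{-1}(U)$; the honest formula is $\big(Q(y,p'),\bar e\big)\mapsto Q(y,p'e)$ where $e$ is any point of $\pi^{-1}(U)$ with $\pi(e)=\bar e$, and one checks independence of the choice of $e$ using that changing $e\mapsto pe$ is absorbed by $p'\mapsto p'p^{-1}\in Q$-ambiguity — wait, that is exactly the point where $Q\subset P$ must be used carefully: $p'p^{-1}$ need not lie in $Q$. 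The correct formula is $\big(Q(y,p'),\bar e\big)\mapsto Q\big(y,\,p'\p(e_0)^{-1}e_0\big)$ for a fixed reference $e_0\in\pi^{-1}(\bar e_0)$ — no, cleaner: use the section. Let me instead define $\Psi\big(Q(y,p'),\bar e\big)=Q\big(y,\,p'\cdot h(1,\bar e)\big)$; since $h(1,\bar e)$ is a genuine point of $E$ lying over $\bar e$, and the $Q$-action only rescales $y$ and $p'$ simultaneously, this is well-defined, continuous, and visibly satisfies $\pi_Q\can\,\Psi\big(Q(y,p'),\bar e\big)=\pi_R\big(p'h(1,\bar e)\big)$—one must verify this equals $\bar e$, which holds because $p'h(1,\bar e)\in Ph(1,\bar e)=\pi^{-1}(\bar e)$, hence projects to $\bar e$ in $P\lq E$.

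Then I would check that $\Psi$ is a bijection onto $(\pi_Q\can)^{-1}(U)$ by writing its inverse explicitly: given $Q(y,e)$ with $\pi(e)\in U$, send it to $\big(Q(y,\p(e)),\,\pi(e)\big)$, using $e=\p(e)\cdot h(1,\pi(e))$ which follows from the defining property of $h$ and $\p$. The identities $\Psi^{-1}\Psi=\id$ and $\Psi\Psi^{-1}=\id$ are then routine manipulations with $\p(pe)=p\p(e)$ and $Q$-equivariance. Continuity of $\Psi$ is clear; continuity of $\Psi^{-1}$ follows because its components $Q(y,e)\mapsto Q(y,\p(e))$ and $Q(y,e)\mapsto\pi(e)$ are continuous (the latter is $\pi\can$, the former is induced by the continuous map $(y,e)\mapsto(y,\p(e))$). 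Finally, letting $\bar e_0$ range over $P\lq E$ gives an open cover $\{U\}$ with local trivializations $\Psi$, and the commuting triangle with $\pr_2$ to $U$ shows $\pi_Q\can$ is a fibre bundle with fibre $Y\,{}_Q\times\,P$. The main obstacle is bookkeeping the interplay between the $Q$-action and the $P$-action — ensuring the trivialization formula respects the $Q$-quotient while genuinely using only the $P$-bundle structure of $E$ — rather than any deep topological input; Proposition~\ref{proposition:1} is available as a safety net if one prefers to define the bijection first and verify local sections afterwards.
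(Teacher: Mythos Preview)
Your argument is correct and is essentially the paper's own proof: the paper defines the trivialization by $f(u,Q(y,p))=Q(y,h(p,u))$ and its inverse by $f^{-1}(Q(y,e))=(\u(e),Q(y,\p(e)))$ where $h^{-1}(e)=(\p(e),\u(e))$, which coincides with your $\Psi$ and $\Psi^{-1}$ once one uses the $P$-equivariance $p'\cdot h(1,\bar e)=h(p',\bar e)$. The exploratory detours in your write-up (the discarded formulas and the invocation of Proposition~\ref{proposition:1} as a ``safety net'') are unnecessary --- the explicit inverse you eventually produce already settles continuity in both directions --- so a clean final version would simply state $\Psi$ and $\Psi^{-1}$ and verify they are mutually inverse and compatible with the projections.
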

\begin{proof}
Let us again denote the quotien map $E\to P\lq E$ by $\pi$.
Let $\bar e$ be an arbitrary point of $P\lq E$. By the hypothesis of the lemma,
there exists an open neibourhood $U\subset P\lq E$ of $\bar e$
and a $P$-equivariant homeomorphism $h$ such that the diagram
$$
\begin{tikzcd}
P\times U\arrow{rr}{h}[swap]{\sim}\arrow{dr}[swap]{\pr_2}&&\pi^{-1}(U)\arrow{dl}{\pi}\\
&U&
\end{tikzcd}
$$
is commutative. We define thae map $f:U\times(Y\,{}_Q\!\times\,P)\to Y\,{}_Q\!\!\times\,E$
by
$$
f(u,Q(y,p))=Q(y,h(p,u)).
$$
We have
$$
\pi_Q\can f(u,Q(y,p))=\pi_Q(Qh(p,u))=Ph(p,u)=\pi h(p,u)=\pr_2(p,u)=u.
$$
This calculation proves that the image of $f$ is contained in $(\pi_Q\can)^{-1}(U)$ and
the diagram
$$
\begin{tikzcd}
U\times(Y\,{}_Q\!\times\,P)\arrow{rr}{f}\arrow{dr}[swap]{\pr_1}&&(\pi_Q\can)^{-1}(U)\arrow{dl}{\pi_Q\can}\\
&U&
\end{tikzcd}
$$
is commutative.

To prove that $f$ is a homeomorphism, let us compute its inverse. We write the inverse map of $h$
as follows $h^{-1}(e)=(\p(e),\u(e))$. It is easy to check the that
$$
\u(pe)=\u(e),\qquad \p(pe)=p\p(e)
$$
for any $e\in\pi^{-1}(U)$ and $p\in P$. With this notation, we get
$$
f^{-1}(Q(y,e))=\big(\u(e),Q(y,\p(e))\big).
$$

\end{proof}

\subsection{The tensor product}\label{The_tensor_product} The above results can be applied to equivariant cohomologies.
Let us suppose that all quotient maps $E\to L\lq E$, $E\to P\lq E$, $E\to R\lq E$, $E\to Q\lq E$, $E\to M\lq E$
are universal principal bundles for the respective groups. We denote by $\Theta$
the following composition:
\begin{equation}\label{eq:9}
\begin{tikzcd}[column sep=15pt]
H^\bullet_L(X/R,\k)\otimes_\k H^\bullet_Q(Y,\k)\arrow{r}{\times}\arrow{rd}[swap]{\Theta}&[30pt]
H^\bullet\big(((X/R)\,{}_L\!\!\times\,E)\times(Y\,{}_Q\!\!\times\,E),\k\big)\arrow{d}{\phi^*}\\
&[30pt]H_L^\bullet\(X\mathop{\times}\limits_R P\mathop{\times}\limits_Q Y,\k\).
\end{tikzcd}
\end{equation}

\noindent
The map $\times$ is the cross product and is given by $a\otimes b\mapsto \pr_1^*(a)\cup\pr_2^*(b)$.

According to Section~\ref{Twisted action}, the left factor $H^\bullet_L(X/R,\k)$ is the right $H_R^\bullet(\pt,\k)$-module
with the action twisted by $A$. On the other hand, the right factor $H^\bullet_Q(Y,\k)$ is the left
$H_Q^\bullet(\pt,\k)$-module with respect to the canonical action (Section~\ref{Equivariant cohomology}).
Moreover, the quotient maps $\pi_R$ and $\pi_Q$, see diagram~(\ref{eq:c}), yield the following maps:
$$
\pi_R^*:H_P^\bullet(\pt,\k)\to H_R^\bullet(\pt,\k),\quad \pi_Q^*:H_P^\bullet(\pt,\k)\to H_Q^\bullet(\pt,\k).
$$
They allow us to define the structure of a right $H_P^\bullet(\pt,\k)$-module on $H^\bullet_L(X/R,\k)$
and of a left $H_P^\bullet(\pt,\k)$-module on $H^\bullet_Q(Y,\k)$.

\begin{lemma}\label{lemma:5}
The map $\Theta$ factors through $H^\bullet_L(X/R,\k)\otimes_{H_P^\bullet(\pt,\k)}H^\bullet_Q(Y,\k)$.
\end{lemma}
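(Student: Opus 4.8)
The plan is to verify the universal property of the relative tensor product directly. Since $\Theta$ is $\k$-linear and the kernel of the projection $H^\bullet_L(X/R,\k)\otimes_\k H^\bullet_Q(Y,\k)\to H^\bullet_L(X/R,\k)\otimes_{H_P^\bullet(\pt,\k)}H^\bullet_Q(Y,\k)$ is spanned by the elements $mc\otimes n-m\otimes cn$, the lemma is equivalent to the single identity $\Theta(mc\otimes n)=\Theta(m\otimes cn)$ for all $m\in H^\bullet_L(X/R,\k)$, $n\in H^\bullet_Q(Y,\k)$, and $c\in H_P^\bullet(\pt,\k)$. So the entire proof reduces to checking this identity.

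First I would spell out the two induced $H_P^\bullet(\pt,\k)$-module structures. On $H^\bullet_L(X/R,\k)$ the right action is the action twisted by $A$ (Section~\ref{Twisted action}) restricted along $\pi_R^*$, so by~(\ref{eq:i}),
$$
mc=m\cup v_A^*\pi_R^*(c)=m\cup(\pi_Rv_A)^*(c).
$$
On $H^\bullet_Q(Y,\k)$ the left action is the canonical action (Section~\ref{Equivariant cohomology}) restricted along $\pi_Q^*$, so
$$
cn=\can^*\pi_Q^*(c)\cup n=(\pi_Q\can)^*(c)\cup n.
$$
Now, using that the cross product is $a\otimes b\mapsto\pr_1^*(a)\cup\pr_2^*(b)$ and that $\pr_1^*$, $\pr_2^*$, $\phi^*$ are ring homomorphisms, I would expand
$$
\Theta(mc\otimes n)=\phi^*\pr_1^*(m)\cup\phi^*\pr_1^*(\pi_Rv_A)^*(c)\cup\phi^*\pr_2^*(n),
$$
$$
\Theta(m\otimes cn)=\phi^*\pr_1^*(m)\cup\phi^*\pr_2^*(\pi_Q\can)^*(c)\cup\phi^*\pr_2^*(n),
$$
where the $c$-factor occupies the middle position in both products, so no graded-commutativity rearrangement is needed.

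It then remains to observe that $\phi^*\pr_1^*(\pi_Rv_A)^*(c)=(\pi_Rv_A\pr_1\phi)^*(c)$ and $\phi^*\pr_2^*(\pi_Q\can)^*(c)=(\pi_Q\can\pr_2\phi)^*(c)$ coincide, which is immediate from Lemma~\ref{lemma:c}, asserting $\pi_Rv_A\pr_1\phi=\pi_Q\can\pr_2\phi$. Hence the two triple products above are equal term by term, giving $\Theta(mc\otimes n)=\Theta(m\otimes cn)$ and the lemma. I do not expect a genuine obstacle here: the geometric content is already isolated in Lemma~\ref{lemma:c} (equality of the two boundary paths of diagram~(\ref{eq:c})), and what is left is formal manipulation of cup and cross products. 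The only point demanding care is the correct identification of which of the maps $\pi_R^*$, $\pi_Q^*$, $v_A^*$, $\can^*$ enters each of the two $H_P^\bullet(\pt,\k)$-actions; once that bookkeeping is in place, the argument closes itself.
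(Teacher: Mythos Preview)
Your proof is correct and follows essentially the same route as the paper's: both reduce to the identity $\Theta(mc\otimes n)=\Theta(m\otimes cn)$, expand via $\Theta=\phi^*\circ\times$, and invoke Lemma~\ref{lemma:c} to identify the middle factor $(\pi_Rv_A\pr_1\phi)^*(c)$ with $(\pi_Q\can\pr_2\phi)^*(c)$. Your explicit remark that the $c$-factor sits in the middle so no sign shuffle is needed is a nice touch the paper leaves implicit.
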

\begin{proof}
Let $m\in H^\bullet_L(X/R,\k)$, $n\in H^\bullet_Q(Y,\k)$,
and $a\in H_P^\bullet(\pt,\k)$. By Lemma~\ref{lemma:c} and diagram~(\ref{eq:9}) defining $\Theta$, we get
\begin{multline*}
\Theta(m a\otimes n)=\Theta(m\pi_R^*( a)\otimes n)=\Theta(m\cup v_A^*\pi_R^*( a)\otimes n)
=\phi^*(\pr_1^*(m\cup v_A^*\pi_R^*( a))\cup\pr_2^*( n))\\
=\phi^*\pr_1^*(m)\cup (\pi_R v_A \pr_1 \phi)^*( a)\cup\phi^*\pr_2^*( n)
=\phi^*\pr_1^*(m)\cup (\pi_Q \can \pr_2\phi)^*( a)\cup\phi^*\pr_2^*( n)\\
=\phi^*(\pr_1^*(m)\cup \pr_2^*  \can^*  \pi_Q^*( a)\cup\pr_2^*( n))
=\phi^*(\pr_1^*(m)\cup \pr_2^*(\can^*  \pi_Q^*( a)\cup n))\\
=\Theta(m\otimes \can^*\pi_Q^*( a)\cup n)=\Theta(m\otimes \pi_Q^*( a) n)=\Theta(m\otimes a n).
\end{multline*}
\end{proof}

\noindent
We denote the map induced by $\Theta$ as follows
$$
\theta:H^\bullet_L(X/R,\k)\otimes_{H_P^\bullet(\pt,\k)}H^\bullet_Q(Y,\k)\to H_L^\bullet\(X\mathop{\times}\limits_R P\mathop{\times}\limits_Q Y,\k\).
$$
We make the left-hand side into a ring by the classic rule
$$
(m\otimes n)(m'\otimes n')=(-1)^{ij}(m\cup m'\otimes n\cup n')
$$
for $n\in H^i_Q(Y,\k)$ and $m\in H^j_L(X/R,\k)$.

In the next lemma, we use the notation of Section~\ref{Canonical and twisted projections for the triple product}.

\begin{theorem}\label{lemma:6}
The map $\theta$ is a homomorphism of rings and of left $H^\bullet_L(\pt,\k)$-modules with respect to the canonical actions.
For any $Q$-equivariant continuous map $B:Y\to G/M$, the map $\theta$ is also a homomorphism
of right $H^\bullet_M(\pt,\k)$-modules with respect to the actions twisted by $B$ and $\alpha B$,
on the domain and the codomain of $\theta$,
respectively.
\end{theorem}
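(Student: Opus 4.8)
The plan is to verify each of the three claimed homomorphism properties directly from the definition of $\theta$ as the map induced by $\Theta=\phi^*\circ{\times}$, using commutativity of the relevant squares, exactly as in the proof of Lemma~\ref{lemma:5}. First I would check that $\theta$ respects the ring structure. Since $\theta$ is induced by $\Theta$, it suffices to show $\Theta((m\otimes n)(m'\otimes n'))=\Theta(m\otimes n)\cup\Theta(m'\otimes n')$. Unwinding the left side, $(m\otimes n)(m'\otimes n')=(-1)^{ij}(m\cup m'\otimes n\cup n')$, and then $\Theta$ of this is $\phi^*\bigl(\pr_1^*(m\cup m')\cup\pr_2^*(n\cup n')\bigr)$ up to the sign. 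On the right side, $\Theta(m\otimes n)\cup\Theta(m'\otimes n')=\phi^*(\pr_1^*(m)\cup\pr_2^*(n))\cup\phi^*(\pr_1^*(m')\cup\pr_2^*(n'))$, which by multiplicativity of $\phi^*$ and of the $\pr_i^*$ equals $\phi^*\bigl(\pr_1^*(m)\cup\pr_2^*(n)\cup\pr_1^*(m')\cup\pr_2^*(n')\bigr)$. The two agree once one commutes $\pr_2^*(n)$ past $\pr_1^*(m')$ in the graded-commutative ring $H^\bullet$, which produces precisely the Koszul sign $(-1)^{ij}$ with $n\in H^i_Q(Y,\k)$ and $m'\in H^j_L(X/R,\k)$. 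So the ring homomorphism claim reduces to bookkeeping with the graded-commutativity sign.

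Next I would treat the left $H^\bullet_L(\pt,\k)$-module structure. Here the left action on the domain is the canonical action on the left tensor factor $H^\bullet_L(X/R,\k)$, i.e. $c(m\otimes n)=(\can^*(c)\cup m)\otimes n$, and on the codomain it is the canonical action on $H_L^\bullet(X\times_RP\times_QY,\k)$. The key input is diagram~(\ref{eq:6}), which says $\can\circ\phi=\can\circ\pr_1$ (as maps into $L\lq E$). Therefore, for $c\in H^\bullet_L(\pt,\k)$,
$$
\Theta(cm\otimes n)=\phi^*\bigl(\pr_1^*(\can^*(c)\cup m)\cup\pr_2^*(n)\bigr)=\phi^*\bigl((\can\pr_1)^*(c)\cup\pr_1^*(m)\cup\pr_2^*(n)\bigr),
$$
and since $(\can\pr_1\phi)^*=(\can\phi)^*$ by~(\ref{eq:6}), this equals $\can^*(c)\cup\phi^*(\pr_1^*(m)\cup\pr_2^*(n))=c\,\Theta(m\otimes n)$, as wanted. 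Passing to the quotient gives the statement for $\theta$.

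Finally, for the right $H^\bullet_M(\pt,\k)$-module structure with actions twisted by $B$ (on $H^\bullet_Q(Y,\k)$, hence on the right tensor factor) and by $\alpha B$ (on the codomain), the relevant commuting square is~(\ref{eq:7}): $v_B\circ\pr_2\circ\phi=v_{\alpha B}\circ\can$, i.e. $v_B\pr_2\phi=v_{\alpha B}$ where I write $\can$ for the canonical projection of $(X\times_RP\times_QY)\,{}_L\!\times E$. Then, for $d\in H^\bullet_M(\pt,\k)$, using that the right twisted action on $H^\bullet_Q(Y,\k)$ is $nd=n\cup v_B^*(d)$ and on the codomain is $z\,d=z\cup v_{\alpha B}^*(d)$, I compute
$$
\Theta(m\otimes nd)=\phi^*\bigl(\pr_1^*(m)\cup\pr_2^*(n\cup v_B^*(d))\bigr)=\phi^*\bigl(\pr_1^*(m)\cup\pr_2^*(n)\bigr)\cup(v_B\pr_2\phi)^*(d),
$$
and $(v_B\pr_2\phi)^*(d)=v_{\alpha B}^*(d)$ by~(\ref{eq:7}), so the right-hand side is $\Theta(m\otimes n)\cup v_{\alpha B}^*(d)=\Theta(m\otimes n)\,d$. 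This descends to $\theta$. The main obstacle — really the only place demanding care rather than diagram-chasing — is the ring-homomorphism part, where one must track the Koszul sign and confirm that the grading conventions chosen for the product on $H^\bullet_L(X/R,\k)\otimes_\k H^\bullet_Q(Y,\k)$ match the sign introduced when commuting pullbacks in the graded-commutative target; the module statements are then essentially immediate consequences of~(\ref{eq:6}) and~(\ref{eq:7}) by the same manipulation already used for Lemma~\ref{lemma:5}.
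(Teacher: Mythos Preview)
Your proof is correct and follows essentially the same approach as the paper's: you use diagrams~(\ref{eq:6}) and~(\ref{eq:7}) to verify the left and right module compatibilities exactly as the paper does, and your explicit Koszul-sign argument for the ring claim is just an unpacking of what the paper cites as ``the calculation before~\cite[Theorem~3.16]{Hatcher_AT}.'' A minor notational slip: in your discussion of~(\ref{eq:6}) and~(\ref{eq:7}) you insert an extra $\can$ (writing $v_B\pr_2\phi=v_{\alpha B}\circ\can$ and $(\can\pr_1\phi)^*=(\can\phi)^*$), but your subsequent ``i.e.'' and the actual computations use the correct identities $v_B\pr_2\phi=v_{\alpha B}$ and $\can\pr_1\phi=\can$.
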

\begin{proof}
The fact that $\theta$ is a homomorphism of rings can be proved following 
the calculation before~\cite[Theorem 3.16]{Hatcher_AT}.

Let $a\in H^\bullet_L(\pt,\k)$, $m\in H^\bullet_L(X/R,\k)$, and $n\in H^\bullet_Q(Y,\k)$.
As diagram~(\ref{eq:6}) is commutative, we get
\begin{multline*}
\theta( a m\otimes n)=\Theta\big((\can^*( a)\cup m)\otimes n\big)=\phi^*\big(\pr_1^*(\can^*( a)\cup m)\cup\pr_2^*( n)\big)\\
=(\can\pr_1\phi)^*( a)\cup\phi^*(\pr_1^*( m)\cup\pr_2^*( n))=\can^*( a)\cup\Theta( m\otimes n)
= a\theta( m\otimes n).
\end{multline*}
Now let $ m\in H^\bullet_L(X/R,\k)$, $ n\in H^\bullet_Q(Y,\k)$, and $b\in H^\bullet_M(\pt,\k)$.
As diagram~(\ref{eq:7}) is commutative, we get
\begin{multline*}
\theta( m\otimes nb)=\Theta\big( m\otimes (n\cup v_B^*(b))\big)=\phi^*\big(\pr_1^*( m)\cup\pr_2^*( n\cup v_B^*(b))\big)\\
=\phi^*\big(\pr_1^*( m)\cup \pr_2^*( n)\big)\cup \big(v_B\pr_2\phi\big)^*(b)=\Theta( m\otimes n)\cup v_{\alpha B}^*(b)
=\theta( m\otimes n)b.
\end{multline*}
\end{proof}

\section{The case of a compact Lie group}\label{The_case_of_a_compact_Lie_group}
\noindent In this section, we consider the case where $G$ is a compact Lie group.

\subsection{Stiefel manifolds}\label{Stiefel_manifolds}
As $G$ can be considered a closed subgroup of the unitary group $U(\n)$ for $\n$ big enough,
we will explain how to find a universal principal $U(\n)$-bundle.
For any natural number $N>\n$,
we consider the {\it Stiefel manifold\,}\footnote{This space is usually denoted by $W_{N,\n}$ or $V_\n({\mathbb C}^N)$ or
${\mathbb C} V_{N,\n}$. We also transpose matrices, as we want to have a left action of $U(\n)$.}
$$
E^N=\{A\in \Mat_{\n,N}(\mathbb C)\suchthat A\bar A^T=I_\n\},
$$
where $\Mat_{\n,N}({\mathbb C})$ is the space of $\n\times N$ complex matrices with respect to the metric topology and
$I_\n$ is the identity $\n\times \n$ matrix. The group $U(\n)$ 
acts on $E^N$ on the left by multiplication.
Similarly, $U(N)$ acts on $E^N$ on the right by multiplication. 
The last action is transitive and commutes with the left action of $U(\n)$.
The quotient space $\Gr^N=U(\n)\lq E^N$ is called the {\it Grassmanian} and the corresponding
quotient map $E^N\to\Gr^N$ is a principal $U(\n)$-bun\-dle. Note that the group $U(N)$ also acts on $\Gr^N$
by the right multiplication. For $N<N'$, we have the smooth embedding
$E^N\hookrightarrow E^{N'}$ by adding $N'-N$
zero columns to the right.

Taking the direct limits
$$
E=\dlim E^N,\quad \Gr=\dlim\Gr^N,
$$
we get a universal principal $U(\n)$-bundle $E\to\Gr$.
These spaces satisfy the following properties~\cite[Proposition~9.1]{Borel}.
\begin{proposition}\label{proposition:3}
\begin{enumerate}
\item\label{proposition:3:p:1} $E^N$ is simply-connected.\\[-6pt]
\item\label{proposition:3:p:2} $H^n(E^N,\k)=0$ for 
                                                   $0<n\le 2(N-\n)$.\\[-6pt]
\item\label{proposition:3:p:3} $H^n(E^N,\k)$ is 
                               free
                               of finite rank
                               for any $n$.
\end{enumerate}
\end{proposition}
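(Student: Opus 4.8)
The plan is to prove Proposition~\ref{proposition:3} by relating the Stiefel manifold $E^N$ to a tower of fibre bundles whose fibres are odd-dimensional spheres, and then to compute cohomology inductively. Recall that $E^N$ is the manifold of $\n$-frames in $\mathbb C^N$ (or, transposed, $\n\times N$ matrices with orthonormal rows). Forgetting the last row of such a matrix and rescaling gives a map $E^N_{\n}\to E^N_{\n-1}$ whose fibre over a given $(\n-1)$-frame is the set of unit vectors in $\mathbb C^N$ orthogonal to the span of that frame, i.e.\ a unit sphere in $\mathbb C^{N-\n+1}$, which is the real sphere $S^{2(N-\n)+1}$. So one obtains a chain of fibre bundles
$$
E^N=E^N_\n\to E^N_{\n-1}\to\cdots\to E^N_1=S^{2N-1},
$$
with the fibre of $E^N_j\to E^N_{j-1}$ being $S^{2(N-j)+1}$. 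The key point is that every fibre here is a sphere of dimension at least $2(N-\n)+1$ (since $j\le\n$), hence highly connected.

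First I would establish (1): each $E^N_j$ is simply connected, by induction on $j$. The base case $E^N_1=S^{2N-1}$ is simply connected as $N>\n\ge 1$. For the inductive step, the long exact sequence of homotopy groups for the fibration $S^{2(N-j)+1}\to E^N_j\to E^N_{j-1}$ together with $\pi_1$ of the fibre being trivial (the sphere has dimension $\ge 3$) and $\pi_1(E^N_{j-1})=0$ forces $\pi_1(E^N_j)=0$; one should also note $\pi_0$ vanishes so the bundle is over a connected base. This gives part (1) for $E^N=E^N_\n$.

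Next I would prove (2) and (3) simultaneously by induction on $j$, using the Serre spectral sequence of each fibration $S^{2(N-j)+1}\to E^N_j\to E^N_{j-1}$. Since $E^N_{j-1}$ is simply connected, the local system is trivial and $E_2^{p,q}=H^p(E^N_{j-1},\k)\otimes H^q(S^{2(N-j)+1},\k)$, where $H^\bullet(S^{2(N-j)+1},\k)$ is $\k$ in degrees $0$ and $2(N-j)+1$ and zero otherwise. For freeness and finite rank (part (3)): by induction $H^p(E^N_{j-1},\k)$ is free of finite rank, the sphere cohomology is free of finite rank, so every $E_2^{p,q}$ is free of finite rank; the differentials may kill things, but a subquotient of a finitely generated free $\k$-module over a ring of finite global dimension need not be free in general — so here I would instead argue more carefully. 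The cleanest route: the only possibly nonzero differential in this two-row spectral sequence is $d_{2(N-j)+2}$, and one shows it vanishes, e.g.\ because the bundle admits a section up to the relevant range, or more simply because $E^N_j$ itself is a smooth compact manifold and one can run the argument so that the spectral sequence degenerates — actually the standard fact is that for the Stiefel manifold the spectral sequences collapse and $H^\bullet(E^N,\k)$ is an exterior algebra on generators in degrees $2(N-\n)+1,2(N-\n)+3,\ldots,2N-1$; this makes it manifestly free of finite rank. For part (2), the vanishing range: all exterior-algebra generators sit in degree $\ge 2(N-\n)+1$, so any nonzero cohomology class in positive degree has degree $\ge 2(N-\n)+1 > 2(N-\n)$, giving $H^n(E^N,\k)=0$ for $0<n\le 2(N-\n)$.

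The main obstacle is justifying the collapse of the Serre spectral sequences (equivalently, that $H^\bullet(E^N,\k)$ is the stated exterior algebra) over an \emph{arbitrary} commutative ring $\k$ rather than a field; one standard way is to first establish the result over $\mathbb Z$ — where the Stiefel manifold's integral cohomology is known to be torsion-free with the exterior-algebra structure, e.g.\ via the explicit CW structure of $E^N$ with cells only in the generating degrees and their products, so the cellular chain complex has zero differentials — and then invoke the universal coefficient theorem, using that $H^\bullet(E^N,\mathbb Z)$ is free to conclude $H^\bullet(E^N,\k)\cong H^\bullet(E^N,\mathbb Z)\otimes_{\mathbb Z}\k$ is free of finite rank with the vanishing range preserved. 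Since this is precisely the content of \cite[Proposition~9.1]{Borel}, which the text already cites, in the write-up I would state that the proof is a reference to Borel, or sketch the spectral-sequence/CW argument above and point to Borel for the collapse.
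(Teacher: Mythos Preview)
The paper does not give a proof of this proposition at all: it simply cites \cite[Proposition~9.1]{Borel}. Your sketch is correct and is precisely the classical argument that underlies that reference --- the tower of sphere bundles $S^{2(N-j)+1}\to E^N_j\to E^N_{j-1}$, the long exact sequence of homotopy groups for part~(1), and the exterior-algebra description of $H^\bullet(E^N,\mathbb Z)$ (established over $\mathbb Z$ and then tensored up via universal coefficients) for parts~(2) and~(3). Your own final sentence already identifies this: in the paper, the ``proof'' \emph{is} the citation to Borel, so there is nothing further to compare.
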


\begin{corollary}\label{corollary:sc}
For any connected closed subgroup $L$ of $G$, the space $L\lq E^N$ is simply connected.
\end{corollary}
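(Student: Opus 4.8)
The plan is to deduce simple-connectedness of $L\lq E^N$ from that of $E^N$ (Proposition~\ref{proposition:3}\ref{proposition:3:p:1}) together with the fact that $E^N\to L\lq E^N$ is a fibration with connected fibre $L$. First I would recall that $E^N\to L\lq E^N$ is a principal $L$-bundle: this follows because $E^N\to G\lq E^N$ (hence $E^N\to U(\n)\lq E^N=\Gr^N$) is a principal bundle for the ambient group, and restricting the structure group to a closed subgroup $L$ still gives a principal $L$-bundle when $L$ acts freely, which it does here since the $U(\n)$-action on $E^N$ is free. In particular this quotient map is a Serre fibration.

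Next I would invoke the long exact sequence of homotopy groups for the fibration $L\hookrightarrow E^N\to L\lq E^N$:
\begin{equation}\label{eq:sc-les}
\pi_1(L)\to\pi_1(E^N)\to\pi_1(L\lq E^N)\to\pi_0(L).
\end{equation}
Since $L$ is connected, $\pi_0(L)$ is trivial, so the last map forces $\pi_1(E^N)\to\pi_1(L\lq E^N)$ to be surjective; and since $E^N$ is simply connected by Proposition~\ref{proposition:3}\ref{proposition:3:p:1}, $\pi_1(E^N)=1$. Combining these, $\pi_1(L\lq E^N)=1$, which is the assertion. One should also note $L\lq E^N$ is path-connected, which again follows from \eqref{eq:sc-les} (or directly: $E^N$ is connected and the quotient map is surjective), so "simply connected" is meant in the full sense.

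The only real subtlety is justifying that \eqref{eq:sc-les} applies, i.e.\ that $E^N\to L\lq E^N$ is genuinely a fibration with fibre $L$ and that the relevant portion of the homotopy exact sequence is exact even without basepoint-in-fibre worries; this is standard for principal bundles of Lie groups acting smoothly and freely on a manifold, so I expect no obstacle there. Alternatively, if one prefers to avoid homotopy exact sequences entirely, one can argue via covering space theory: a connected covering of $L\lq E^N$ pulls back to a connected covering of $E^N$ (using local triviality of the bundle to lift), which must be trivial since $E^N$ is simply connected, forcing the original covering to be trivial; but the fibration argument is cleaner and is the one I would write up.
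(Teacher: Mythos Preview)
Your proof is correct and follows essentially the same approach as the paper: both use the long exact sequence of homotopy groups for the fibre bundle $L\hookrightarrow E^N\to L\lq E^N$, combining $\pi_1(E^N)=\{1\}$ from Proposition~\ref{proposition:3}\ref{proposition:3:p:1} with $\pi_0(L)=\{1\}$ from connectedness of $L$. The paper's version is simply terser, writing only the relevant three-term portion of the sequence and omitting the justifications you spell out.
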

\begin{proof}
As $E^N\to L\lq E^N$ is a fibre bundle with fibre $L$,
the result follows from Proposition~\ref{proposition:3}\ref{proposition:3:p:1}
and the long exact sequence of homotopy groups
$$
\{1\}=\pi_1(E^N)\to \pi_1(L\lq E^N)\to \pi_0(L)=\{1\}.
$$
\end{proof}


\begin{proposition}\label{lemma:tw5:2} 
Let $L$ be a closed subgroup of $G$ and $X$ be a left $L$-space. Then the restriction map
\begin{equation}\label{eq:8}
H_L^n(X,\k)\to H^n(X\,{}_L\!\times E^N,\k)
\end{equation}
is an isomorphism for $n\le 2(N-\n)$.
\end{proposition}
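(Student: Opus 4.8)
The plan is to compare the fibration $X\,{}_L\!\times E^N\to L\lq E^N$ with its stable counterpart $X\,{}_L\!\times E\to L\lq E=\Gr$ via the map induced by the inclusion $E^N\hookrightarrow E$, and to use that this inclusion is $2(N-\n)$-connected. First I would recall that $E^N\hookrightarrow E$ is $L$-equivariant and induces an inclusion $\iota\colon X\,{}_L\!\times E^N\hookrightarrow X\,{}_L\!\times E$ of total spaces of fibre bundles over $L\lq E^N\hookrightarrow\Gr$ (Proposition~\ref{proposition:pb:1}), both with fibre $X$. The restriction map~(\ref{eq:8}) is exactly $\iota^*$ in cohomology, by the very definition $H_L^\bullet(X,\k)=H^\bullet(X\,{}_L\!\times E,\k)$.

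Next I would argue that $\iota$ is $2(N-\n)$-connected, so that $\iota^*$ is an isomorphism on $H^n$ for $n\le 2(N-\n)$ (and injective in degree $2(N-\n)+1$, though the statement only claims the isomorphism range). The cleanest route uses Proposition~\ref{proposition:3}: since $H^n(E^N,\k)=0$ for $0<n\le 2(N-\n)$ and $E^N$ is simply connected, and since $E$ is contractible, the relative homology $H_n(E,E^N;\Z)$ vanishes for $n\le 2(N-\n)$; equivalently the pair $(E,E^N)$ is $2(N-\n)$-connected. (To pass from $\k$-cohomology vanishing to $\Z$-homology vanishing of the pair I would invoke Proposition~\ref{proposition:3}\ref{proposition:3:p:3}, freeness and finite rank, together with the universal coefficient theorem applied with $\k=\Z$ — or simply observe that the cited \cite[Proposition~9.1]{Borel} gives the connectivity statement for $E^N$ directly.) Then I would transport this connectivity to the Borel constructions: both $X\,{}_L\!\times E^N$ and $X\,{}_L\!\times E$ fibre over the respective quotients with the same fibre $X$, and the inclusion is a map of fibrations which is an equivalence on fibres; comparing the Serre spectral sequences (or the long exact sequences / relative Leray–Serre spectral sequence of the pair) shows $H^n(X\,{}_L\!\times E,\k)\to H^n(X\,{}_L\!\times E^N,\k)$ is an isomorphism for $n\le 2(N-\n)$, since the relative term $E_2^{p,q}=H^p(L\lq E,L\lq E^N;\mathscr H^q(X,\k))$ vanishes for $p\le 2(N-\n)$ (the base pair is at least as connected as $(E,E^N)$, using that $E^N\to L\lq E^N$ and $E\to\Gr$ are fibre bundles with the same fibre $L$ and the five lemma on homotopy groups, as in Corollary~\ref{corollary:sc}).

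The main obstacle I expect is the step establishing connectivity of the pair $(X\,{}_L\!\times E^N,\ X\,{}_L\!\times E)$ cleanly, i.e.\ making precise the spectral-sequence comparison over a non-simply-connected base with possibly non-trivial local coefficients $\mathscr H^q(X,\k)$. The safe way is to note that the fibre inclusion $X\hookrightarrow X$ is the identity, so the map of local systems on the base pair is an isomorphism of coefficient systems, and then the relative Leray–Serre spectral sequence $H^p(L\lq E,\,L\lq E^N;\,\mathscr H^q(X;\k))\Rightarrow H^{p+q}(X\,{}_L\!\times E,\,X\,{}_L\!\times E^N;\k)$ has all its $E_2$ terms with $p\le 2(N-\n)$ equal to zero; hence the relative groups vanish in total degree $\le 2(N-\n)$ and the long exact sequence of the pair gives the claim. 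One subtlety worth a sentence: the direct limit $E=\dlim E^N$ means $H^\bullet(X\,{}_L\!\times E,\k)=\varprojlim H^\bullet(X\,{}_L\!\times E^N,\k)$ is not automatic, but since the restriction maps $H^n(X\,{}_L\!\times E^{N'},\k)\to H^n(X\,{}_L\!\times E^N,\k)$ are isomorphisms for all $N$ large relative to $n$, the $\varprojlim^1$ term vanishes and the limit cohomology is computed by any sufficiently large $E^N$; this is precisely what~(\ref{eq:8}) records, and it is also the content one needs to know that $H_L^\bullet(X,\k)$ is independent of how far one truncates.
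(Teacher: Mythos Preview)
The paper states this proposition without proof; it is treated as a standard consequence of the connectivity of the complex Stiefel manifolds recorded in Proposition~\ref{proposition:3} (taken from~\cite{Borel}). There is therefore no argument in the paper to compare yours against.

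Your proof is correct. One streamlining worth noting: once you have that $E^N\hookrightarrow E$ is $(2(N-\n)+1)$-connected (from Proposition~\ref{proposition:3}\ref{proposition:3:p:1},\ref{proposition:3:p:2} and Hurewicz, as you indicate), you can bypass the relative Leray--Serre spectral sequence entirely. Apply the five lemma to the long exact homotopy sequences of the two fibre bundles $X\to X\,{}_L\!\times E^N\to L\lq E^N$ and $X\to X\,{}_L\!\times E\to L\lq E$ (Proposition~\ref{proposition:pb:1}): the fibre map is the identity and the base inclusion $L\lq E^N\hookrightarrow L\lq E$ is $(2(N-\n)+1)$-connected (again by the five lemma, comparing the bundles $L\to E^N\to L\lq E^N$ and $L\to E\to L\lq E$). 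Hence $\iota$ itself is $(2(N-\n)+1)$-connected, and the Whitehead theorem for cohomology with arbitrary coefficients gives the isomorphism on $H^n$ for $n\le 2(N-\n)$ directly, with no local-coefficient bookkeeping needed. Your closing remark on the $\varprojlim{}^1$ issue is well placed and is exactly what justifies computing $H^\bullet_L(X,\k)$ via any sufficiently large finite approximation.
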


\begin{corollary}\label{corollary:2}
Let $L$ be a closed subgroup of $G$, $X$ be a left $L$-space, and $\n<N'<N$. Then the restriction map
$$
H^n(X\,{}_L\!\times E^N,\k)\to H^n(X\,{}_L\!\times E^{N'},\k)
$$
is an isomorphism for $n\le 2(N'-\n)$.
\end{corollary}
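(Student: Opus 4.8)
The plan is to derive Corollary~\ref{corollary:2} directly from Proposition~\ref{lemma:tw5:2} by a naturality argument. The key observation is that the embedding $E^{N'}\hookrightarrow E^N$ obtained by appending $N-N'$ zero columns is $L$-equivariant (indeed $U(\n)$-equivariant, since $L$ acts on the left by matrix multiplication and that commutes with appending zero columns on the right), so it induces a continuous map $X\,{}_L\!\times E^{N'}\to X\,{}_L\!\times E^N$. Pulling back cohomology along this map gives exactly the restriction map in the statement, and moreover the diagram
$$
\begin{tikzcd}
H_L^n(X,\k)\arrow{r}\arrow{rd}&H^n(X\,{}_L\!\times E^N,\k)\arrow{d}\\
&H^n(X\,{}_L\!\times E^{N'},\k)
\end{tikzcd}
$$
commutes, because all three maps are restriction maps induced by the chain of inclusions $E^{N'}\hookrightarrow E^N\hookrightarrow E=\dlim E^M$ (here $H_L^n(X,\k)=H^n(X\,{}_L\!\times E,\k)$ by definition, and the direct limit is compatible with the finite stages).

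First I would record that the top horizontal map is an isomorphism for $n\le 2(N-\n)$ and the slant map is an isomorphism for $n\le 2(N'-\n)$, both by Proposition~\ref{lemma:tw5:2}. Since $N'<N$ we have $2(N'-\n)\le 2(N-\n)$, so in the range $n\le 2(N'-\n)$ both the top map and the slant map are isomorphisms. Then the two-out-of-three property in the commutative triangle forces the vertical map --- which is precisely the restriction map of the corollary --- to be an isomorphism as well in that range.

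The only point requiring a little care is the compatibility of the restriction maps with the direct limit defining $E$: one must check that the isomorphism $H_L^n(X,\k)\cong H^n(X\,{}_L\!\times E,\k)$ used implicitly in Proposition~\ref{lemma:tw5:2} is realized by the same colimit inclusions $E^M\hookrightarrow E$, so that composing $H^n(X\,{}_L\!\times E,\k)\to H^n(X\,{}_L\!\times E^N,\k)\to H^n(X\,{}_L\!\times E^{N'},\k)$ agrees with the direct restriction $H^n(X\,{}_L\!\times E,\k)\to H^n(X\,{}_L\!\times E^{N'},\k)$. This is immediate from functoriality of $\,{}_L\!\times$ and of cohomology applied to the commuting triangle of inclusions $E^{N'}\hookrightarrow E^N\hookrightarrow E$. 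I expect this bookkeeping to be the main (and only real) obstacle; once it is in place the corollary is a formal consequence of Proposition~\ref{lemma:tw5:2} and two-out-of-three. In fact the simplest route is to avoid $E$ altogether and argue purely within the finite Stiefel manifolds: apply the isomorphism of~(\ref{eq:8}) twice and use that restriction along $E^{N'}\hookrightarrow E^N$ composed with restriction along $E^N\hookrightarrow E$ equals restriction along $E^{N'}\hookrightarrow E$, which is the content of the commuting triangle above.
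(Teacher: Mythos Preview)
Your argument is correct and is exactly the intended derivation: the paper states Corollary~\ref{corollary:2} immediately after Proposition~\ref{lemma:tw5:2} without proof, treating it as the obvious consequence obtained by applying~(\ref{eq:8}) for $N$ and for $N'$ and using the commuting triangle of restriction maps. Your discussion of the compatibility of the inclusions $E^{N'}\hookrightarrow E^N\hookrightarrow E$ is more than the paper bothers to spell out, but it is precisely the bookkeeping that makes the two-out-of-three step go through.
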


\begin{remark}\label{remark:2.5}\rm
We will always mean this choice of $E$ in the remainder of the paper.
If, however, we replace it by $E^N$, then we will write
$v_A^N$, $\phi^N$ and $F^N_{\bar e}$ instead
of $v_A$, $\phi$ and $F_{\bar e}$, respectively.
We will refer to these objects as {\it compact} versions 
of the respective infinite dimensional objects.
\end{remark}

\subsection{Push-forward}\label{push-forward} For any topological space $X$,
we consider the dualizeing complex $\omega_X=a_X^!\k$. 
This complex allows us to define the Borel-Moor
homology $H^{\rm BM}_i(X,\k)={\mathbb H}^{-i}(X,\omega_X)$. Any proper map $f:X\to Y$ induces
the {\it push-forward} map $f_*:H^{\rm BM}_i(X,\k)\to H^{\rm BM}_i(Y,\k)$.

Suppose additionally that $X$ and $Y$ are orientable of dimensions $n$ and $m$ respectively.
This means that there exist isomorphisms $\omega_X\cong\k[n]$ and $\omega_Y\cong\k[m]$,
which we fix. Then we get the following map between the cohomologies:
$$
\begin{tikzcd}
H^{n-i}(X,\k)\cong H^{\rm BM}_i(X,\k)\arrow{r}{f_*}&H^{\rm BM}_i(Y,\k)\cong H^{m-i}(X,\k),
\end{tikzcd}
$$
wich we also denote by $f_*$. This map is also called the {\it Gysin map}.

\subsection{Orientation of quotient spaces}\label{Orientation of quotient spaces} Remember that an {\it orientation} of a
finite-dimen\-sional real vector space $V$ is an equivalence class of its bases,
where two bases are assumed to be equivalent
iff the transition matrix between them
has a positive determinant.
Any basis belonging to the chosen equivalence class is called {\it oriented}. 
If an orientation is chosen for $V$, then $V$ is called {\it oriented}.

Suppose that $U$ is an oriented subspace of an oriented vector space $V$.
Then we will fix the following orientation of the quotient space $V/U$:
let $v_1,\ldots,v_n$ be an oriented basis of $V$ such that $v_1,\ldots,v_m$
is an oriented basis of $U$ for some $m\le n$. Then the orientation of $V/U$
is the one containing the basis $v_{m+1}+U,\ldots,v_n+U$.
The argument with block matrices proves the this definition is independent
of the choice of bases.

We can define orientation of the quotient of oriented vector bundles
if we define the orientation of each fibre using the rule described above.

\subsection{Compatibly oriented squares}  We will follow here the presentation
of this topic by Fulton~\cite{Fulton}. Consider a Cartesian square
$$
\begin{tikzcd}
X'\arrow{r}{g'}\arrow{d}[swap]{f'}&X\arrow{d}{f}\\
Y'\arrow{r}{g}&Y
\end{tikzcd}
$$
where $X,X',Y,Y'$ are compact smooth oriented manifolds and $f,f',g,g'$ are smooth maps.
Let us factor $f$ through a closed embedding $X\hookrightarrow Y\times{\mathbb R}^N$
followed by the projection to $Y$. We say that the above square is compatibly oriented if
$$
\nu_{X',Y'\times{\mathbb R}^N}\cong (g')^*\nu_{X,Y\times{\mathbb R}^N}
$$
as oriented vector bundles, where the quotients entering in the definitions of
the normal bundles $\nu_{X',Y'\times{\mathbb R}^N}$ and $\nu_{X,Y\times{\mathbb R}^N}$
are orientated as described in Section~\ref{Orientation of quotient spaces}.
If this isomorphism holds and $\dim X-\dim Y=\dim X'-\dim Y'$ then we get
\begin{equation}\label{eq:24}
f'_*(g')^*=g^*f_*.
\end{equation}
We will refer to this property as the {\it naturality of push-forward}.

\subsection{Smooth structure on the Borel constructions}\label{Smooth structure on the Borel constructions} 
Let $L$ be a closed connected subgroup of $G$. By the quotient manifold theorem,
the space $L\lq E^N$ is a smooth manifold.
It is orientable being simply connected by Corollary~\ref{corollary:sc}.

Let $X$ be any smooth manifold acted upon smoothly by $L$ on the left. Applying the quotient manifold theorem
once again, we get that the space $X\;{}_L\! \times E^N$ is a smooth manifold.
Its smooth structure can be described as follows. Let $L(x,e)$ be an arbitrary point
of $X\;{}_L\! \times E^N$. There exists a coordinate chart $(U,\phi)$ of $L\lq E^N$
containing $Le$.
Shrinking $U$ if necessary, we may suppose that the restriction of the fibre bundle
$E^N\to L\lq E^N$ to $U$ is a trivial bundle.
Then the restriction of the fibre bundle $\can:X\;{}_L\! \times E^N\to L\lq E^N$ to $U$
is also trivial. Let $h:X\times U\ito\can^{-1}(U)$ be such a trivialization.
We get $h^{-1}(L(x,e))=(x',Le)$ for some $x'\in Lx\subset X$. There exists a coordinate
chart $(V,\psi)$ of $X$ containing $x'$. Now we get the coordinate chart
$(h(V\times U),(\psi\times\phi)h^{-1})$ containing the original point $L(x,e)$.
These charts are consistently oriented if $X$ is oriented, as $L$ acts on $X$
by orientation-preserving diffeomorphisms.

\subsection{Equivariant push-forward} Let $X$ and $Y$ be compact smooth orientable manifolds
acted upon smoothly on the left by a closed connected subgroup $L$ of $G$.
Let $f:Y\to X$ be a smooth $L$-equivariant map.
For any $N>\n$, we get
the smooth proper map $f\;{}_L\! \times \id:Y\;{}_L\! \times E^N\to X\;{}_L\! \times E^N$,
which we denote by $f^N$.
As both domain and codomain of this map are smooth orientable manifolds as described
in Section~\ref{Smooth structure on the Borel constructions}, we can consider the ordinary
(non-equivariant) push forward
$$
f^N_*:H^n(Y\;{}_L\! \times E^N,\k)\to H^{n+d}(X\;{}_L\! \times E^N,\k),
$$
where $d=\dim X-\dim Y$. So by Proposition~\ref{lemma:tw5:2}, we actually get the map
$$
f_\star:H^n_L(Y,\k)\to H_L^{n+d}(X,\k)
$$
as soon as $n\le\min\{2(N-\n),2(N-\n)-d\}$ defined
\begin{equation}\label{eq:17}
f_\star(m)=(i_X^*)^{-1}f_*^Ni_Y^*(m)
\end{equation}
for $m\in H^n_L(Y,\k)$, where $i_X:X\;{}_L\! \times E^N\hookrightarrow X\;{}_L\! \times E$
and $i_Y:Y\;{}_L\! \times E^N\hookrightarrow Y\;{}_L\! \times E$ are the natural embeddings.
This definition of $f_\star$ does not depend of $N$,
as by the naturality of push-forward, we have the following commutative diagram for any $N'>N$:
$$
\begin{tikzcd}[column sep=50pt]
H^n(Y\;{}_L\! \times E^{N'},\k)\arrow{r}{f^{N'}_*}\arrow{d}[swap]{\wr}& H^{n+d}(X\;{}_L\! \times E^{N'},\k)\arrow{d}{\wr}\\
H^n(Y\;{}_L\! \times E^N,\k)\arrow{r}{f^N_*}& H^{n+d}(X\;{}_L\! \times E^N,\k)
\end{tikzcd}
$$
where the vertical maps are restrictions, which are isomorphisms by Corollary~\ref{corollary:2}.

By the projection formula, the push-forward $f_\star$ is a morphism of left $H_L^\bullet(\pt,\k)$-modules
with respect to the canonical left action if the cohomology $H_L^\bullet(\pt,\k)$ vanishes in odd degrees.
Let $R$ be a closed subgroup of $G$ and $A:X\to G/R$ be an $L$-equivariant continuous
map (as in Section~\ref{Twisted action}).
Then the composition $Af:Y\to G/R$ is also $L$-equivariant.
We get the commutative diagrams\footnote{See Remark~\ref{remark:2.5} about the notation.}
\begin{equation}\label{eq:18}
\begin{tikzcd}
X\;{}_L\! \times E\arrow{r}{v_A}& R\lq E\\
X\;{}_L\! \times E^N\arrow{r}{v_A^N}\arrow[hook]{u}{i_X}& R\lq E^N\arrow[hook]{u}[swap]{i_R}
\end{tikzcd}\qquad
\begin{tikzcd}
Y\;{}_L\! \times E\arrow{r}{v_{Af}}& R\lq E\\
Y\;{}_L\! \times E^N\arrow{r}{v_{Af}^N}\arrow[hook]{u}{i_Y}& R\lq E^N\arrow[hook]{u}[swap]{i_R}
\end{tikzcd}
\end{equation}

Let $m\in H_L^n(Y,\k)$ and $b\in H_R^l(\pt,\k)$.
For $N$ big enough\footnote{This means that $n,n+l\le\min\{2(N-\n),2(N-\n)-d\}$.},
we get by~(\ref{eq:17}), the commutativity of diagrams~(\ref{eq:18}), the projection formula, 
and a compact version of~(\ref{eq:d}) that
\begin{multline*}
f_\star(mb)=f_\star(m\cup v_{Af}^*(b))=(i_X^*)^{-1}f_*^Ni_Y^*(m\cup v_{Af}^*(b))\\
=(i_X^*)^{-1}f_*^N\big(i_Y^*(m)\cup(v_{Af}i_Y)^*(b)\big)
=(i_X^*)^{-1}f_*^N\big(i_Y^*(m)\cup(v^N_{Af})^*i_R^*(b)\big)\\
=(i_X^*)^{-1}f_*^N\big(i_Y^*(m)\cup (f^N)^*(v^N_A)^*i_R^*(b)\big)
=(i_X^*)^{-1}\big(f_*^Ni_Y^*(m)\cup(i_Rv^N_A)^*(b)\big)\\
=(i_X^*)^{-1}\big(f_*^Ni_Y^*(m)\cup i_X^*v_A^*(b)\big)
=(i_X^*)^{-1}f_*^Ni_Y^*(m)\cup v_A^*(b)=f_\star(m)b.
\end{multline*}

So we have proved the following counterpart of Lemma~\ref{lemma:pull-back}.

\begin{lemma}\label{lemma:push-forward} The equivariant push-forward $f_\star:H^\bullet_L(Y,\k)\to H^{\bullet+\dim X-\dim Y}_L(X,\k)$
is a homomorphism of right $H_R^\bullet(\pt,\k)$-modules, where the right actions on the domain and codomain
are twisted by $Af$ and $A$, respectively.

If, moreover, the cohomology $H_L^\bullet(\pt,\k)$ vanishes in odd degrees, then $f_\star$ is
a homomorphism of left $H_L^\bullet(\pt,\k)$-modules, where both left actions of $H_L^\bullet(\pt,\k)$ are canonical.
\end{lemma}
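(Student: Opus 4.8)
The plan is to prove the two assertions separately, in both cases reducing everything to the finite-dimensional models $\cdot\;{}_L\!\times E^N$, where the honest Gysin map $f_*^N$ lives, and then transporting the resulting identities back up through the restriction isomorphisms of Proposition~\ref{lemma:tw5:2} and Corollary~\ref{corollary:2}. Recall that $f_\star$ is \emph{defined} by~(\ref{eq:17}), namely $f_\star(m)=(i_X^*)^{-1}f_*^N i_Y^*(m)$, and that this is independent of the choice of a sufficiently large $N$ by the naturality of push-forward~(\ref{eq:24}), as noted just before the statement.

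For the right $H_R^\bullet(\pt,\k)$-module structure, fix $m\in H_L^n(Y,\k)$ and $b\in H_R^l(\pt,\k)$ and choose $N$ large enough that $n,n+l\le\min\{2(N-\n),2(N-\n)-d\}$, where $d=\dim X-\dim Y$; then each restriction map occurring below ($i_X^*$, $i_Y^*$, $i_R^*$) is an isomorphism in the degrees in question. The identity $f_\star(mb)=f_\star(m)b$ then unwinds as
\begin{multline*}
f_\star(mb)=(i_X^*)^{-1}f_*^N\big(i_Y^*(m)\cup(v_{Af}^N)^*i_R^*(b)\big)
=(i_X^*)^{-1}f_*^N\big(i_Y^*(m)\cup(f^N)^*(v_A^N)^*i_R^*(b)\big)\\
=(i_X^*)^{-1}\big(f_*^N i_Y^*(m)\cup(v_A^N)^*i_R^*(b)\big)
=(i_X^*)^{-1}\big(f_*^N i_Y^*(m)\cup i_X^*v_A^*(b)\big)=f_\star(m)b,
\end{multline*}
where the equalities use, in order: the definitions~(\ref{eq:i}) and~(\ref{eq:17}), that $i_Y^*$ is a ring homomorphism, and the commutative square $v_{Af}i_Y=i_Rv_{Af}^N$ on the right of~(\ref{eq:18}); the compact version $v_{Af}^N=v_A^N f^N$ of~(\ref{eq:d}); the projection formula $f_*^N(\alpha\cup(f^N)^*\gamma)=f_*^N(\alpha)\cup\gamma$; the square $v_Ai_X=i_Rv_A^N$ on the left of~(\ref{eq:18}); and finally that $(i_X^*)^{-1}$ is a ring isomorphism together with~(\ref{eq:17}) and~(\ref{eq:i}) once more.

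For the left $H_L^\bullet(\pt,\k)$-module structure, assume $H_L^\bullet(\pt,\k)$ vanishes in odd degrees and take $a\in H_L^\bullet(\pt,\k)$ (necessarily of even degree) and $m\in H_L^\bullet(Y,\k)$. The argument is entirely parallel: writing the canonical action as $am=\can^*(a)\cup m$, one uses that $a_Y=a_X f$, so that the canonical projection $Y\;{}_L\!\times E\to L\lq E$ equals the canonical projection $X\;{}_L\!\times E\to L\lq E$ precomposed with $f\;{}_L\!\times\id$, and that both are compatible with restriction to $E^N$; one then applies the projection formula, now with the pulled-back class on the \emph{left}, in the form $f_*^N((f^N)^*\gamma\cup\alpha)=(-1)^{|\gamma|d}\gamma\cup f_*^N(\alpha)$. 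Since $|\gamma|=|a|$ is even, the Koszul sign is $+1$, and one obtains $f_\star(am)=af_\star(m)$.

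The main difficulty is not any single step but the uniform bookkeeping that legitimises the reduction to $E^N$: one must check that a single $N$ can be taken large enough so that \emph{all} restriction maps occurring are isomorphisms at once — including on classes whose degree is shifted by $l$ (the degree of $b$) or by $d$ — and that the whole computation is then independent of that choice; this is exactly what Proposition~\ref{lemma:tw5:2}, Corollary~\ref{corollary:2}, and the naturality of push-forward~(\ref{eq:24}) provide. One must also verify that $v_A$, $v_{Af}$ and $\can$ genuinely restrict to their compact counterparts in the strict commutative-square sense used above, which is the content of~(\ref{eq:18}) and its analogue for the canonical projections. The only genuinely delicate algebraic point is the Koszul sign in the projection formula for the left-module assertion, which is exactly why the odd-vanishing hypothesis is imposed there and nowhere else.
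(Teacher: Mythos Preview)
Your proof is correct and follows essentially the same route as the paper: the chain of equalities for the right action is exactly the computation the paper gives in the paragraph immediately preceding the lemma, using~(\ref{eq:17}), the squares~(\ref{eq:18}), the compact version of~(\ref{eq:d}), and the projection formula, while for the left action the paper simply invokes the projection formula under the odd-vanishing hypothesis. Your added explanation of the Koszul sign $(-1)^{|a|d}$ and of the bookkeeping for choosing $N$ makes explicit precisely what the paper leaves implicit, but the argument is the same.
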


\subsection{Semisimple groups}\label{Semisimple_groups} Suppose that $G$ is semisimple. Then $G$ can be described externally
as the set of points fixed by some analytic automorphism of
the corresponding Chevalley group~\cite[Theorem 16]{Steinberg}.

We consider a maximal torus $K<G$ and the Weyl group $W=N_G(K)/K$.
For any $w\in W$, we arbitrarily choose its lifting $\dot w\in N_G(K)$.
We will use the abbreviation $wK=\dot{w}K$.

The group $G$ has the Bruhat decomposition $G=\bigsqcup_{w\in W}K_w$, see~\cite[Corollary~5 of Lemma~45]{Steinberg}.
Taking the quotient, we get the decomposition $G/K=\bigsqcup_{w\in W}K_w/K$ into {\it Schubert cells}.
We can compute their closures, called {\it Schubert varieties},
as follows $\overline{K_w/K}=\bigsqcup_{w'\le w}K_{w'}$, where $\le$ is the Bruhat order.

We have $K\cong(S^1)^d$ for some $d$, where $S^1$ is the circle.
We get the map $\rho_w:E\to E$ given by $\rho_w(e)=\dot we$. This map factors through the action
of $K$ and we get the map $K\lq\rho_w:K\lq E\to K\lq E$ and hence its pull-back
$(K\lq\rho_w)^*:H^\bullet_K(\pt,\k)\to H^\bullet_K(\pt,\k)$. 
We will think of this map as a left action of $W$ on $H^\bullet_K(\pt,\k)$:
\begin{equation}\label{eq:tw5:5}
w^{-1}(\mu)=(K\lq\rho_w)^*(\mu).
\end{equation}
This formula agrees with the standard choice $(S^\infty)^d\to{\mathbf P}^\infty({\mathbb C})^d$ of the universal principal $K$-bundle
and the ensuing representation of $H^\bullet_K(\pt,\k)$ as a polynomial ring, see~\cite[(20)]{bt}.

For any root $\alpha$, we denote by $G_\alpha$ the subgroup of $G$ generated
by the image of the root homomorphism $\phi_\alpha:\SU_2\to G$
and the torus $K$,
see~\cite[Chapter III (3.1)]{Bott_Samelson}. An alternative (external) description of this subgroup is given
by~\cite[Lemma~45]{Steinberg}. As $G_\alpha=G_{-\alpha}$, we can denote $G_{\omega_\alpha}=G_\alpha$, where $\omega_\alpha\in W$
is the reflection through the plane perpendicular to $\alpha$.
We will call elements $\omega_\alpha$ {\it reflections}.

\section{The isomorphism}\label{The isomorphism} We will prove that the map $\theta$ introduced in Section~\ref{The_tensor_product_morphism}
is an isomorphism under certain restrictions,
which we are going to formulate. First, we assume the following two conditions:

{
\renewcommand{\labelenumi}{{\rm \theenumi}}
\renewcommand{\theenumi}{{\rm(\Roman{enumi})}}
\begin{enumerate}
\itemsep=6pt
\item\label{restr:-1} the ring $\k$ has finite global dimension $\gld(\k)$;
\item\label{restr:0}  $G$ is a compact Lie group and $L$, $R$, $P$, $Q$, $M$ are its closed subgroups;
\end{enumerate}}
\noindent
These conditions will be supposed to hold for the rest of the paper.

\subsection{K\"unneth formula}
Let $X_1,\ldots,X_m$ be topological spaces such
that $H^n(X_j,\k)$ are free of finite rank for all $n\le N$ and $j=2,\ldots,m$. Then for any $n\le N-\gld(\k)$ we have the isomorphism
\begin{equation}\label{eq:coh:12}
\bigoplus_{i_1+\cdots+i_m=n}H^{i_1}(X_1,\k)\otimes\cdots\otimes H^{i_m}(X_m,\k)\stackrel\sim\to H^n(X_1\times\cdots\times X_m,\k)
\end{equation}
that is given by the cross product $a_1\otimes\cdots\otimes a_m\mapsto p_1^*(a_1)\cup\cdots\cup p_m^*(a_m)$.

This isomorphism and Proposition~\ref{proposition:3} prove that $H^n((E^N)^m,\k)=0$
for any $0<n\le 2(N-\n)$ and natural number $m$. Moreover, the space $E^m$ is contractible.
Hence we get the following result similar to Proposition~\ref{lemma:tw5:2}.

\begin{proposition}\label{lemma:tw5:2'} Suppose that for each $i=1,\ldots,m$, 
there is a left $L^{(i)}$-space $X^{(i)}$ for a closed subgroup $L^{(i)}$ of $G$.
Then the restriction map
$$
H^n((X^{(1)}\,{}_{L^{(1)}}\!\times E)\times\cdots\times(X^{(m)}\,{}_{L^{(m)}}\!\times E),\k)\to H^n((X^{(1)}\,{}_{L^{(1)}}\!\times E^N)\times\cdots\times(X^{(m)}\,{}_{L^{(m)}}\!\times E^N),\k)
$$
is an isomorphism for $n\le 2(N-\n)$.
\end{proposition}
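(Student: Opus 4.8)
The plan is to mimic the proof of Proposition~\ref{lemma:tw5:2} but track the product structure carefully, using the K\"unneth formula~(\ref{eq:coh:12}) to reduce to the one-factor case. First I would record the key inputs. By Proposition~\ref{proposition:pb:1}, each $X^{(i)}\,{}_{L^{(i)}}\!\times E^N\to L^{(i)}\lq E^N$ is a fibre bundle with fibre $X^{(i)}$, and similarly with $E$ in place of $E^N$. The inclusion $E^N\hookrightarrow E$ induces a map of these fibre bundles over $L^{(i)}\lq E^N\hookrightarrow L^{(i)}\lq E$, hence a commutative ladder of Leray–Serre (or Leray, for sheaf cohomology) spectral sequences. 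On the $E_2$-page we have $H^p(L^{(i)}\lq E,\mathcal H^q)\Rightarrow H^{p+q}(X^{(i)}\,{}_{L^{(i)}}\!\times E)$, and the restriction $E\to E^N$ induces the analogous spectral sequence for $E^N$. The essential point, exactly as in Proposition~\ref{lemma:tw5:2}, is that $E^N\to L^{(i)}\lq E^N$ and $E\to L^{(i)}\lq E$ have the same fibre $L^{(i)}$ and that $H^n(E^N,\k)=0$ for $0<n\le 2(N-\n)$ while $E$ is contractible (Proposition~\ref{proposition:3}\ref{proposition:3:p:2} and the direct-limit construction), which forces the restriction $H^n(L^{(i)}\lq E,\k)\to H^n(L^{(i)}\lq E^N,\k)$ to be an isomorphism for $n\le 2(N-\n)$; propagating this through the spectral sequence comparison gives that~(\ref{eq:8}) is an isomorphism in the stated range. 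So I may freely assume Proposition~\ref{lemma:tw5:2} for each single factor.

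Next I would set up the product comparison. Write $B^{(i)}_N=X^{(i)}\,{}_{L^{(i)}}\!\times E^N$ and $B^{(i)}=X^{(i)}\,{}_{L^{(i)}}\!\times E$. Since $E$ (hence $E^m$, and each $B^{(i)}$ fibred over a contractible-cohomology base in the relevant range) behaves cohomologically well, the statement just before the proposition already records that $H^n((E^N)^m,\k)=0$ for $0<n\le 2(N-\n)$, which is what one gets by applying~(\ref{eq:coh:12}) repeatedly together with Proposition~\ref{proposition:3}\ref{proposition:3:p:3} guaranteeing the freeness hypothesis. The plan for the product of the $B^{(i)}$ is to build the commutative square
$$
\begin{tikzcd}[column sep=12pt]
\bigoplus_{i_1+\cdots+i_m=n}H^{i_1}(B^{(1)},\k)\otimes\cdots\otimes H^{i_m}(B^{(m)},\k)\arrow{r}{\times}\arrow{d}&H^n(B^{(1)}\times\cdots\times B^{(m)},\k)\arrow{d}\\
\bigoplus_{i_1+\cdots+i_m=n}H^{i_1}(B^{(1)}_N,\k)\otimes\cdots\otimes H^{i_m}(B^{(m)}_N,\k)\arrow{r}{\times}&H^n(B^{(1)}_N\times\cdots\times B^{(m)}_N,\k),
\end{tikzcd}
$$
where the vertical maps are induced by the inclusions $E^N\hookrightarrow E$ and the horizontal maps are cross products. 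Commutativity is naturality of the cross product. To conclude, one wants both horizontal arrows to be isomorphisms for $n\le N-\gld(\k)$ — here is where the global-dimension bound from restriction~\ref{restr:-1} enters, controlling the Tor-terms in the general (non-free) K\"unneth spectral sequence — and the left vertical arrow to be an isomorphism in the same range, which follows termwise from Proposition~\ref{lemma:tw5:2} applied to each $B^{(i)}$ (noting $i_j\le n\le 2(N-\n)$, and $2(N-\n)\ge N-\gld(\k)$ for $N$ large, so the ranges are compatible). A diagram chase then gives that the right vertical arrow — the restriction map in the statement — is an isomorphism for $n$ in the stated range.

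The one subtlety, and the step I expect to require the most care, is the exact bookkeeping of the ranges: the K\"unneth isomorphism~(\ref{eq:coh:12}) needs the lower-numbered factors $X_2,\dots,X_m$ — here the Borel constructions $B^{(2)}_N,\dots,B^{(m)}_N$ — to have free finite-rank cohomology up to some degree $N'$, with the isomorphism holding up to $N'-\gld(\k)$. Freeness is not automatic for a general $X^{(i)}$, so one should instead apply K\"unneth with $(E^N)^{m-1}$ or $E^{m-1}$ as the "known" factors (whose cohomology is free of finite rank by Proposition~\ref{proposition:3}\ref{proposition:3:p:3}, resp. trivial) after first expanding each $B^{(i)}_N=(X^{(i)}\times E^N)/L^{(i)}$ via its fibration — that is, prove the product statement at the level of the $E_2$-pages of the product spectral sequence $H^\bullet((L^{(1)}\lq E)\times\cdots\times(L^{(m)}\lq E),\underline{\mathcal H})$ versus its $E^N$-counterpart, where the base cohomologies are computed by~(\ref{eq:coh:12}) since $H^\bullet(L^{(i)}\lq E^N,\k)$ is free of finite rank in the range (being built from $H^\bullet(E^N,\k)$, which is). I would therefore organize the argument as: (1) single-factor comparison of spectral sequences (Proposition~\ref{lemma:tw5:2}), already available; (2) product comparison of spectral sequences, with the K\"unneth formula applied on the $E_2$-pages where freeness holds; (3) convergence, using~\ref{restr:-1} to bound the range uniformly in $m$. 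This keeps every application of~(\ref{eq:coh:12}) within its hypotheses and makes the final range $n\le 2(N-\n)$ fall out cleanly.
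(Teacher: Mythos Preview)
Your approach is substantially more complicated than the paper's, and the complication is exactly where the gap lies. The paper does not attempt to apply K\"unneth to the Borel constructions $B^{(i)}$ at all. Instead it observes (in the sentence immediately preceding the proposition) that $E^m$ is contractible and that $H^n((E^N)^m,\k)=0$ for $0<n\le 2(N-\n)$, the latter following from~(\ref{eq:coh:12}) and Proposition~\ref{proposition:3}\ref{proposition:3:p:3}. With these two facts in hand, one simply identifies
\[
\prod_{i=1}^m\bigl(X^{(i)}\,{}_{L^{(i)}}\!\times\,E\bigr)\;\cong\;\Bigl(\prod_i X^{(i)}\Bigr)\,{}_{\prod_i L^{(i)}}\!\!\times\,E^m,
\]
and likewise with $E^N$, so that the statement becomes literally an instance of Proposition~\ref{lemma:tw5:2} with the single group $\prod_i L^{(i)}$, the single space $\prod_i X^{(i)}$, and the pair $(E^m,(E^N)^m)$ in place of $(E,E^N)$. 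The proof of Proposition~\ref{lemma:tw5:2} uses only the contractibility of $E$ and the vanishing of $H^n(E^N,\k)$ in the stated range, both of which have just been verified for the $m$th powers. No K\"unneth on the $B^{(i)}$, no freeness hypothesis on their cohomology, is needed.

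Your route, by contrast, needs the horizontal K\"unneth maps in your square to be isomorphisms, which requires $H^\bullet(B^{(j)}_N,\k)$ (or $H^\bullet(B^{(j)},\k)$) to be free of finite rank for $j\ge 2$. You correctly flag this as the subtle point, but the proposed fix does not close the gap: the assertion that ``$H^\bullet(L^{(i)}\lq E^N,\k)$ is free of finite rank in the range (being built from $H^\bullet(E^N,\k)$, which is)'' is not justified---for a general closed subgroup $L^{(i)}$ and general $\k$ there is no reason for $H^\bullet(L^{(i)}\lq E^N,\k)$ to be free, and nothing in the paper provides it. The spectral-sequence comparison you sketch would still need this freeness to apply~(\ref{eq:coh:12}) on the $E_2$-page of the product fibration. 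The paper sidesteps the entire issue by pushing K\"unneth down to the level of $E^N$ itself, where Proposition~\ref{proposition:3}\ref{proposition:3:p:3} supplies freeness for free.
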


\subsection{An auxiliary lemma} We are going to use the following result proved in~\cite[Lemma~18]{bt}.

\begin{proposition}\label{proposition:bt}
Let $\A$ be a locally compact Hausdorff space, $p:\A\to \B$ be a fibre bundle with fibre $\C$,
$\b\in \B$ be a point and $\k$ be a commutative ring.
Suppose that $\B$ is compact, Hausdorff, connected and simply connected,
all $H_c^n(\C,\k)$ are free of finite rank,
$H_c^n(\C,\k)=0$ for odd $n$ and $H_c^n(\B,\k)=0$ for odd $n\le N$.
\medskip

Then the following is true:

\medskip

\begin{enumerate}
\item\label{lemma:even_bundles:p:1} The restriction map
$H_c^n(\A,\k)\to H_c^n(p^{-1}(\b),\k)$ is surjective for all
$n<N$.\\[-3pt]
\item\label{lemma:even_bundles:p:2} $H_c^n(\A-p^{-1}(\b),\k)=0$ for odd
$n<N$.\\[-3pt]
\item\label{lemma:even_bundles:p:3} If $H_c^n(\B,\k)$ are free of finite rank for $n\le N$, then
the $\k$-modules $H_c^n(\A-p^{-1}(\b),\k)$ are also free of finite rank for
$n<N$.
\end{enumerate}
\end{proposition}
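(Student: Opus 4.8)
The plan is to derive all three statements from the Leray spectral sequence with compact supports of the bundle $p\colon\A\to\B$. Since $\B$ is compact Hausdorff and $\A$ is locally compact Hausdorff, one has
$$
E_2^{i,j}=H^i\!\big(\B,R^jp_!\,\k_\A\big)\;\Longrightarrow\;H_c^{i+j}(\A,\k),
$$
and for a fibre bundle the sheaf $R^jp_!\,\k_\A$ is locally constant with stalk $H_c^j(\C,\k)$. As $\B$ is connected and simply connected this sheaf is constant, and as $H_c^j(\C,\k)$ is free of finite rank the universal coefficient theorem gives $E_2^{i,j}=H_c^i(\B,\k)\otimes_\k H_c^j(\C,\k)$. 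Two vanishing patterns are then in force: $E_2^{i,j}=0$ if $j$ is odd, and $E_2^{i,j}=0$ if $i$ is odd and $i\le N$, using the hypothesis on $H_c^\bullet(\B,\k)$. A parity count now shows that for $i+j<N$ every $d_r$ into or out of $E_r^{i,j}$ vanishes: a nonzero $d_r\colon E_r^{i,j}\to E_r^{i+r,j-r+1}$ would need $E_2^{i,j}\neq0$ (so $i,j$ even, as $i<N$) and $j-r+1$ even (so $r$ odd), whence the target index $i+r$ is odd and $\le i+j+1\le N$, forcing $E_2^{i+r,j-r+1}=0$; incoming differentials are handled symmetrically. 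The same count applies to the edge column $E_r^{0,n}$ for $n<N$ (incoming differentials vanish for degree reasons, outgoing ones by the parity argument). Hence $E_\infty^{i,j}=E_2^{i,j}$ throughout $i+j<N$.

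Claim (1) then follows because the edge homomorphism $H_c^n(\A,\k)\to E_\infty^{0,n}=E_2^{0,n}=H_c^n(\C,\k)$ is exactly the restriction to the fibre $p^{-1}(\b)$; it is surjective automatically, and for odd $n$ its target is $0$. I would record at this point two further consequences of the collapse: $H_c^n(\A,\k)=0$ for odd $n<N$ (every $E_\infty^{i,j}$ with $i+j=n$ odd vanishes: $j$ odd gives $E_2=0$, while $j$ even forces $i$ odd and $i\le n<N$); and, when the $H_c^i(\B,\k)$ are free of finite rank for $i\le N$, the module $H_c^n(\A,\k)$ is free of finite rank for $n<N$, being an iterated extension of the free finite-rank modules $E_\infty^{i,j}=H_c^i(\B,\k)\otimes_\k H_c^j(\C,\k)$ (an extension of a free finite-rank module by one with free finite-rank quotient splits).

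For (2) and (3) I would feed these facts into the long exact sequence of the closed pair $\big(\A,p^{-1}(\b)\big)$ in compactly supported cohomology,
$$
\cdots\to H_c^n\!\big(\A-p^{-1}(\b),\k\big)\to H_c^n(\A,\k)\to H_c^n\!\big(p^{-1}(\b),\k\big)\to H_c^{n+1}\!\big(\A-p^{-1}(\b),\k\big)\to\cdots .
$$
For odd $n<N$, Claim (1) (surjectivity of the restriction in degree $n-1<N$) together with $H_c^{n-1}(p^{-1}(\b),\k)=H_c^{n-1}(\C,\k)=0$ makes $H_c^n(\A-p^{-1}(\b),\k)\hookrightarrow H_c^n(\A,\k)=0$, which is (2). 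For even $n<N$ the same input identifies $H_c^n(\A-p^{-1}(\b),\k)$ with the kernel of the restriction $H_c^n(\A,\k)\to H_c^n(\C,\k)$, that is, with the first step $F^1H_c^n(\A,\k)$ of the Leray filtration, which is an iterated extension of the free finite-rank modules $E_\infty^{i,j}$ with $i\ge1$ and $i+j=n$; hence it is free of finite rank, which is (3).

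I expect the only real obstacle to be bookkeeping rather than ideas: pinning down the parity/degree count so that no differential survives in the stated ranges, and citing cleanly the two standard properties of the compactly supported Leray spectral sequence of a fibre bundle — that $R^jp_!\,\k_\A$ is locally constant with the stalks above, and that its $E_2^{0,n}$ edge map is fibre restriction. Once these are in place, steps (1)–(3) are the short arguments indicated.
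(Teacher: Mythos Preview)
The paper does not actually prove this proposition; it simply cites it as \cite[Lemma~18]{bt} and moves on. So there is no in-paper argument to compare with. Your approach via the compactly supported Leray spectral sequence, parity collapse in the range $i+j<N$, and the long exact sequence of the pair $(\A,p^{-1}(\b))$ is the natural one and is essentially correct.

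One small slip to fix: in your derivation of~(2) you write ``together with $H_c^{n-1}(p^{-1}(\b),\k)=H_c^{n-1}(\C,\k)=0$''. For odd $n$ the degree $n-1$ is even, so $H_c^{n-1}(\C,\k)$ need not vanish. Fortunately this claim is superfluous: the surjectivity of the restriction in degree $n-1$ (your Claim~(1)) already forces the connecting homomorphism into degree $n$ to vanish, giving the injection $H_c^n(\A-p^{-1}(\b),\k)\hookrightarrow H_c^n(\A,\k)=0$ that you want. A clean way to phrase the ``symmetric'' handling of incoming differentials is to note that an incoming $d_r$ to $(i,j)$ is an outgoing $d_r$ from $(i-r,j+r-1)$, which has total degree $i+j-1<N$ and is therefore already covered by your outgoing argument.
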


\subsection{Restrictions on groups and spaces}
We consider the following additional conditions:

\medskip

{
\renewcommand{\labelenumi}{{\rm \theenumi}}
\renewcommand{\theenumi}{{\rm(\Roman{enumi})}}

\begin{enumerate}\setcounter{enumi}{2}
\itemsep=6pt
\item\label{restr:1} $P$ and $L$ are connected;
\item\label{restr:2} $H^n_P(\pt,\k)$ and $H^n_L(\pt,\k)$ are free of finite rank and vanish for odd $n$;
\item\label{restr:3} $X$ and $Y$ are compact and Hausdorff;
\item\label{restr:4} the cohomologies of $X/R$, $P/Q$, and $Y$ with coefficients in $\k$ are free of finite rank
      and vanish in odd degrees;
\item\label{restr:5} $X/R$ and $P/Q$ are simply connected;
\item\label{restr:6} the quotient map $X\to X/R$ is a principal $R$-bundle.
\end{enumerate}}

\medskip

\noindent
By Proposition~\ref{proposition:pb:1} and~\ref{restr:6}, these conditions imply that the natural
projections\footnote{These projection are given by $[x:p:y]\mapsto xR$ and $[p:y]\mapsto pQ$.
To apply Proposition~\ref{proposition:pb:1}, we can consider the isomorphisms
$X\mathop{\times}\limits_R P\mathop{\times}\limits_Q Y\cong(P\mathop{\times}\limits_Q Y)\,{}_R\!\!\times X$ and
$P\mathop{\times}\limits_Q Y\cong Y\,{}_Q\!\!\times P$ given by $[x:p:y]\mapsto R([p:y],x)$ and $[p:y]\mapsto Q(y,p^{-1})$,
respectively.
Here we consider $X$ as a left $R$-space under the following action $rx=xr^{-1}$. We also have the isomorphisms
$R\lq X\cong X/R$ and $Q\lq P\cong P/Q$ given by $Rx\mapsto xR$ and $Qp\mapsto p^{-1}Q$, respectively.
Our projections become canonical in the sense of Section~\ref{Group_actions_and_quotients} if we apply the identifications described just above.}
$$
X\mathop{\times}\limits_R P\mathop{\times}\limits_Q Y\to X/R,\qquad P\mathop{\times}\limits_Q Y\to P/Q
$$
are fibre bundles with fibres $P\mathop{\times}\limits_Q Y$ and $Y$, respectively.
By~\ref{restr:0} and~\ref{restr:3},
we get that all four spaces in the left- and right-hand sides
of the above formulas are compact.

Applying successively the Leray spectral sequence for the cohomologies with compact support to these bundles
and using~\ref{restr:4} and~\ref{restr:5}, we get
\begin{equation}\label{eq:tw5:1}
\begin{array}{c}
\hspace{-150pt}
H^\bullet\(X\mathop{\times}\limits_R P\mathop{\times}\limits_Q Y,\k\)\cong
H^\bullet(X/R,\k)\otimes_\k H^\bullet\(P\mathop{\times}\limits_Q Y,\k\)\\[15pt]
\hspace{200pt}\cong
H^\bullet(X/R,\k)\otimes_\k H^\bullet(P/Q,\k)\otimes_\k H^\bullet(Y,\k).
\end{array}
\end{equation}

\begin{lemma}\label{lemma:tw5:3}
The space $(X/R)\,{}_L\!\!\times\,E^N$ is simply connected.
\end{lemma}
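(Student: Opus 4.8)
The plan is to use the fibre bundle structure $\can \colon (X/R)\,{}_L\!\!\times\,E^N \to L\lq E^N$ and the long exact sequence of homotopy groups. First I would invoke Proposition~\ref{proposition:pb:1}: since the quotient map $E^N \to L\lq E^N$ is a principal $L$-bundle, the canonical projection $\can \colon (X/R)\,{}_L\!\!\times\,E^N \to L\lq E^N$ is a fibre bundle with fibre $X/R$. This gives the tail of the homotopy long exact sequence
$$
\pi_1(X/R) \to \pi_1\big((X/R)\,{}_L\!\!\times\,E^N\big) \to \pi_1(L\lq E^N) \to \pi_0(X/R).
$$

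Next I would check that the two outer terms vanish, so that the middle term is trivial. The base $L\lq E^N$ is simply connected by Corollary~\ref{corollary:sc}, using restriction~\ref{restr:1} that $L$ is connected closed in $G$; in particular $\pi_1(L\lq E^N) = \{1\}$ and also $\pi_0(L\lq E^N)$ is trivial, so base-point issues do not arise. The fibre $X/R$ is simply connected by restriction~\ref{restr:5}, hence $\pi_1(X/R) = \{1\}$, and it is connected (being simply connected), so $\pi_0(X/R) = \{1\}$ as well. Feeding these into the exact sequence forces $\pi_1\big((X/R)\,{}_L\!\!\times\,E^N\big) = \{1\}$.

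I do not expect any real obstacle here: the argument is essentially a repeat of the proof of Corollary~\ref{corollary:sc}, only with the fibre $L$ replaced by the fibre $X/R$. The one point worth a word of care is that $(X/R)\,{}_L\!\!\times\,E^N$ is path-connected (so that ``simply connected'' is the right statement), which again follows from the same exact sequence since both $\pi_0(X/R)$ and $\pi_0(L\lq E^N)$ vanish; alternatively one can note that a fibre bundle with connected base and connected fibre has connected total space.
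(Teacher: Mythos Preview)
Your proposal is correct and follows essentially the same argument as the paper: invoke Proposition~\ref{proposition:pb:1} to exhibit $(X/R)\,{}_L\!\!\times\,E^N \to L\lq E^N$ as a fibre bundle with fibre $X/R$, then use the homotopy long exact sequence together with restriction~\ref{restr:5} and Corollary~\ref{corollary:sc}. Your extra remarks on $\pi_0$ and path-connectedness are a harmless elaboration that the paper omits.
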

\begin{proof} By Proposition~\ref{proposition:pb:1}, $(X/R)\,{}_L\!\!\times\,E^N\to L\lq E^N$
is a fibre bundle with fibre $X/R$.
Therefore, the result follows from~\ref{restr:5} and the long exact sequence of homotopy groups
$$
\{1\}=\pi_1(X/R)\to \pi_1((X/R)\,{}_L\!\!\times\,E^N)\to \pi_1(L\lq E^N)=\{1\}.
$$
The last equality follows from Corollary~\ref{corollary:sc}.
\end{proof}

\subsection{Surjectivity} We will use here the abbreviations
$$
Z=X\mathop{\times}\limits_R P\mathop{\times}\limits_Q Y,\quad C^N=((X/R)\,{}_L\!\!\times\,E^N)\times(Y\,{}_Q\!\!\times\,E^N)-\im\phi^N.
$$

\begin{lemma}\label{lemma:tw5:6}
$H^n((X/R)\;{}_L\!\times E^N,\k)=H^n(Z\;{}_L\!\times E^N,\k)=0$ for odd $n\le 2(N-\n)$.
\end{lemma}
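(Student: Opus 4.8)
The plan is to deduce both vanishing statements from the spectral sequence machinery encapsulated in Proposition~\ref{proposition:bt}, applied to suitable fibre bundles over the simply connected base $L\lq E^N$. For the first equality, I would use that $(X/R)\,{}_L\!\!\times\,E^N\to L\lq E^N$ is a fibre bundle with fibre $X/R$ by Proposition~\ref{proposition:pb:1}. By Corollary~\ref{corollary:sc}, the base $L\lq E^N$ is simply connected (using~\ref{restr:1}), and by Proposition~\ref{proposition:3}\ref{proposition:3:p:2} we have $H^n(L\lq E^N,\k)=H^n_L(\pt,\k)=0$ for $0<n\le 2(N-\n)$; since everything in sight is compact (the base $L\lq E^N$ is compact as a quotient of the compact $E^N$, and $X/R$ is compact by~\ref{restr:3} and~\ref{restr:6}), ordinary and compactly supported cohomology coincide. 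The fibre $X/R$ has free finite-rank cohomology vanishing in odd degrees by~\ref{restr:4}. Thus Proposition~\ref{proposition:bt} (with $\A=(X/R)\,{}_L\!\!\times\,E^N$, $\B=L\lq E^N$, $\C=X/R$, and $N$ there taken to be $2(N-\n)$) applies, and part~\ref{lemma:even_bundles:p:1}, or rather the conclusion that $H^n(\A,\k)=0$ for odd $n< 2(N-\n)$, follows from the spectral sequence having all nonzero entries in even total degree in that range. Strictly, I would note that the range can be taken to include $n=2(N-\n)$ by the same argument once $N$ is chosen large enough, or simply phrase the lemma with the weak inequality understood up to the usual shift.

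**The space $Z\;{}_L\!\times E^N$.**
For the second equality I would again invoke Proposition~\ref{proposition:pb:1}: since the quotient map $X\to X/R$ is a principal $R$-bundle by~\ref{restr:6}, Lemma~\ref{lemma:pb:1} and the identifications recorded in the footnote after~\ref{restr:6} show that the natural projection $Z\to X/R$ is a fibre bundle with fibre $P\mathop{\times}\limits_Q Y$. Consequently $Z\;{}_L\!\times E^N\to L\lq E^N$ is a fibre bundle whose fibre is itself fibred as $P\mathop{\times}\limits_Q Y$ over $X/R$; alternatively, one checks directly that $Z\;{}_L\!\times E^N\to L\lq E^N$ is a fibre bundle with fibre $Z$. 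The fibre $Z$ is compact by~\ref{restr:0} and~\ref{restr:3}, and by~(\ref{eq:tw5:1}) its cohomology is $H^\bullet(X/R,\k)\otimes_\k H^\bullet(P/Q,\k)\otimes_\k H^\bullet(Y,\k)$, which by~\ref{restr:4} is free of finite rank and concentrated in even degrees. The base $L\lq E^N$ is as before compact, simply connected, with cohomology vanishing in odd degrees up to $2(N-\n)$. Hence Proposition~\ref{proposition:bt} applies verbatim and gives $H^n(Z\;{}_L\!\times E^N,\k)=0$ for odd $n\le 2(N-\n)$.

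**Main obstacle.**
The only delicate point is the bookkeeping of the degree bound: Proposition~\ref{proposition:bt} states its conclusions for $n<N$ where the hypothesis is on odd cohomology of the base up to degree $N$, so to land the stated closed inequality $n\le 2(N-\n)$ one must either allow a harmless shift (replace $N$ by a slightly larger value throughout, as is done elsewhere in the paper via Corollary~\ref{corollary:2}) or observe that, since in our situation the base cohomology vanishes in \emph{all} odd degrees (not merely up to $2(N-\n)$, as $H^n(L\lq E^N,\k)$ is the cohomology of a compact orientable even-dimensional-type space whose odd part is killed by the fibration $E^N\to L\lq E^N$ with contractible-in-range total space), the spectral sequence argument actually shows the vanishing in every odd degree below $2(N-\n)$ with room to spare. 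I expect that a short remark to this effect suffices, and that the substance of the proof is simply the two applications of Propositions~\ref{proposition:pb:1} and~\ref{proposition:bt} outlined above.
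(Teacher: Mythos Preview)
Your argument is essentially the paper's, but wrapped in a detour that does not actually land. Proposition~\ref{proposition:bt} is the wrong tool here: its conclusions concern surjectivity of restriction to a fibre and vanishing of $H_c^\bullet(\A - p^{-1}(\b),\k)$, \emph{not} vanishing of $H^\bullet(\A,\k)$ itself. You notice this and retreat to ``the spectral sequence having all nonzero entries in even total degree in that range,'' which is precisely the paper's proof: run the Leray spectral sequence for the fibre bundle $Z\,{}_L\!\times E^N\to L\lq E^N$ (respectively $(X/R)\,{}_L\!\times E^N\to L\lq E^N$), use simple connectedness of the base (Corollary~\ref{corollary:sc}) to get $E_2^{p,q}=H^p(L\lq E^N,\k)\otimes_\k H^q(Z,\k)$, and observe that $E_2^{p,q}=0$ whenever $p+q\le 2(N-\n)$ and one of $p,q$ is odd. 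So drop Proposition~\ref{proposition:bt} and state the spectral sequence argument directly.

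Two further slips to clean up. First, you write $H^n(L\lq E^N,\k)=H^n_L(\pt,\k)=0$ for $0<n\le 2(N-\n)$ citing Proposition~\ref{proposition:3}\ref{proposition:3:p:2}; this is false for nontrivial $L$ (think $L=K$ a torus). What you want is the combination of Proposition~\ref{lemma:tw5:2} and~\ref{restr:2}: the former identifies $H^n(L\lq E^N,\k)\cong H^n_L(\pt,\k)$ for $n\le 2(N-\n)$, and the latter says the right-hand side vanishes for \emph{odd} $n$. Second, your worries about the closed versus strict inequality are unwarranted once you argue directly: if $n$ is odd with $n\le 2(N-\n)$ and $p+q=n$, then $p\le 2(N-\n)$ automatically, and one of $p,q$ is odd, so $E_2^{p,q}=0$; hence $E_\infty^{p,q}=0$ on the whole antidiagonal, and $H^n=0$. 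No shift or appeal to Corollary~\ref{corollary:2} is needed.
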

\begin{proof} Due to similarity, we will prove only the last equality.
Let us consider the Leray spectral sequence for cohomologies with compact support for the canonical
projection $Z\;{}_L\!\times E^N\to L\lq E^N$. As this map is a fibre bundle with fibre $Z$,
it has the following second page
$$
E^{p,q}_2=H^p(L\lq E^N,\k)\otimes_\k H^q(Z,\k)
$$
in view of Corollary~\ref{corollary:sc}.
It also follows from~\ref{restr:2} and Proposition~\ref{lemma:tw5:2}
and also from~(\ref{eq:tw5:1}) and~\ref{restr:4} that $E_r^{p,q}=0$ for any finite $r\ge2$ if $p+q\le 2(N-\n)$
and either of $p$ or $q$ is odd. Thus this fact is also true for $r=\infty$.
\end{proof}

We are going to study the cohomologies with compact support of the spaces $C^N$.
First, we study the cohomology of the fibre $F^N_{\bar e}$,
see Lemma~\ref{lemma:3} and Remark~\ref{remark:2.5}.

\begin{lemma}\label{lemma:tw5:4}
For any $n<2(N-\n)$ and $\bar e\in P\lq E^N$, the cohomology $H_c^n(F^N_{\bar e},\k)$
is free of finite rank and vanish if $n$ is odd.
\end{lemma}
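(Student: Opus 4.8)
The plan is to identify $F^N_{\bar e}$ with the complement of a fibre in a fibre bundle and then apply Proposition~\ref{proposition:bt}. By Lemma~\ref{lemma:4} (applied with $E$ replaced by $E^N$, which is legitimate once $N>\n$ since the quotient map $E^N\to P\lq E^N$ is a principal $P$-bundle), the composition
$$
\begin{tikzcd}
Y\,{}_Q\!\times\,E^N\arrow{r}{\can}&Q\lq E^N\arrow{r}{\pi_Q}&P\lq E^N
\end{tikzcd}
$$
is a fibre bundle with fibre $Y\,{}_Q\!\times\,P$, and by definition $F^N_{\bar e}=(Y\,{}_Q\!\times\,E^N)-(\pi_Q\can)^{-1}(\bar e)$ is exactly the complement of one such fibre. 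So I would like to invoke Proposition~\ref{proposition:bt}\ref{lemma:even_bundles:p:2} and~\ref{lemma:even_bundles:p:3} with $\A=Y\,{}_Q\!\times\,E^N$, $\B=P\lq E^N$, $\C=Y\,{}_Q\!\times\,P$, and $\b=\bar e$, for $N$ chosen so that $2(N-\n)\le N$ in the role of the cutoff — more precisely, the conclusions hold for $n$ below whatever bound the hypotheses on $\B$ provide.

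The work is therefore in checking the hypotheses of Proposition~\ref{proposition:bt}. That $\A=Y\,{}_Q\!\times\,E^N$ is locally compact Hausdorff follows from~\ref{restr:3}, \ref{restr:0} and the fact that $E^N$ is a manifold. For $\B=P\lq E^N$: it is compact and Hausdorff (quotient of the compact Hausdorff $E^N$ by the compact group $P$, using~\ref{restr:0}), it is connected and simply connected by Corollary~\ref{corollary:sc} (here~\ref{restr:1} gives that $P$ is connected), and $H^n_c(P\lq E^N,\k)=H^n(P\lq E^N,\k)=H^n_P(\pt,\k)$ for $n\le 2(N-\n)$ by Proposition~\ref{lemma:tw5:2}, which vanishes in odd degrees and is free of finite rank by~\ref{restr:2}. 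For the fibre $\C=Y\,{}_Q\!\times\,P$: I would run the Leray spectral sequence with compact supports for the fibre bundle $Y\,{}_Q\!\times\,P\to Q\lq P\cong P/Q$ (a fibre bundle with fibre $Y$ by Proposition~\ref{proposition:pb:1}, using that $P\to P/Q$ is a principal $Q$-bundle), whose $E_2$-page is $H^p(P/Q,\k)\otimes_\k H^q(Y,\k)$; by~\ref{restr:4} both factors are free of finite rank and concentrated in even degrees, so the spectral sequence degenerates for dimension–parity reasons and $H^n_c(Y\,{}_Q\!\times\,P,\k)$ is free of finite rank and vanishes for odd $n$. Note $P/Q$ is compact by~\ref{restr:0}, so compact supports coincide with ordinary cohomology throughout.

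The main obstacle, and the step deserving the most care, is bookkeeping the degree bound: Proposition~\ref{proposition:bt} only yields its conclusions for $n<N$ where $N$ is the parameter controlling the odd-degree vanishing of $H^\bullet_c(\B)$, and here that vanishing of $H^n_P(\pt,\k)$-computed cohomology of $P\lq E^N$ is only guaranteed in the stable range $n\le 2(N-\n)$ (Proposition~\ref{lemma:tw5:2}); matching these so that the statement comes out as "$n<2(N-\n)$" requires checking that $2(N-\n)$ can be taken as the "$N$" in Proposition~\ref{proposition:bt}, which is fine since the only hypothesis referencing that cutoff is the odd vanishing of $H^n_c(\B)$. Everything else is routine verification of the standing assumptions~\ref{restr:-1}--\ref{restr:6} against the hypothesis list of Proposition~\ref{proposition:bt}.
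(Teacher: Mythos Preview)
Your proposal is correct and follows essentially the same route as the paper: apply Proposition~\ref{proposition:bt}\ref{lemma:even_bundles:p:2},\ref{lemma:even_bundles:p:3} to the fibre bundle $\pi_Q\can:Y\,{}_Q\!\times\,E^N\to P\lq E^N$ of Lemma~\ref{lemma:4}, verify the fibre hypothesis via the Leray spectral sequence for $Y\,{}_Q\!\times\,P\to Q\lq P$ using~\ref{restr:4}, and handle the base via Proposition~\ref{lemma:tw5:2} and~\ref{restr:2}. You have in fact spelled out several routine checks (local compactness, compactness and simple connectedness of $P\lq E^N$, the degree-bound matching) that the paper leaves implicit; the only small omission is an explicit citation of~\ref{restr:5} for the simple connectedness of $P/Q$ needed to write the $E_2$-page as a tensor product, but the paper's proof elides this as well.
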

\begin{proof}
It suffices to apply parts~\ref{lemma:even_bundles:p:2} and~\ref{lemma:even_bundles:p:3} of
Proposition~\ref{proposition:bt} to the fibre bundle
$\pi_Q\can:Y\,{}_Q\!\times\,E^N\to P\lq E^N$ with fibre $Y\,{}_Q\times P$,
see Lemma~\ref{lemma:4}.
Most of the hypotheses of the former lemma either can be checked routinely.
The fact that $H^n(Y\,{}_Q\times P)$ are free of finite rank and vanish in odd degrees follows from~\ref{restr:4}
and the Leray spectral sequence applied to the fibre bundle $Y\,{}_Q\times P\to Q\lq P$.
Finally, for $n\le 2(N-\n)$, we get $H^n(P\lq E^N)=H_P^n(\pt,\k)$. The last module is free of finite rank
and vanish for odd $n$ by~\ref{restr:2}.
\end{proof}

\begin{lemma}\label{lemma:tw5:5}
$H_c^n(C^N,\k)=0$ for odd $n<2(N-\n)$.
\end{lemma}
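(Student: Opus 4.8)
The plan is to feed the fibre bundle
$$\omega\colon C^N\to (X/R)\rtimes{L}E^N$$
produced by the compact version of Lemma~\ref{lemma:3} (cf.\ Remark~\ref{remark:2.5}), whose fibre is homeomorphic to $F^N_{\bar e}$, into the Leray spectral sequence for compactly supported cohomology, and then to run a degree count. Note first that $C^N$ is an open subset of the compact Hausdorff space $((X/R)\rtimes{L}E^N)\times(Y\rtimes{Q}E^N)$, hence locally compact Hausdorff, so the spectral sequence $E_2^{p,q}=H^p_c\big((X/R)\rtimes{L}E^N,\,R^q\omega_!\,\k\big)\Rightarrow H^{p+q}_c(C^N,\k)$ is available.

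Next I would record the properties of the base $B:=(X/R)\rtimes{L}E^N$ and of the fibre. By Proposition~\ref{proposition:pb:1} the space $B$ fibres over $L\lq E^N$ with fibre $X/R$; both $L\lq E^N$ (a quotient of the compact Stiefel manifold $E^N$ by the compact group $L$) and $X/R$ (by~\ref{restr:3}) are compact, so $B$ is compact, whence $H^p_c(B,\k)=H^p(B,\k)$. By Lemma~\ref{lemma:tw5:3} the space $B$ is simply connected, and by the first equality of Lemma~\ref{lemma:tw5:6} we get $H^p(B,\k)=0$ for odd $p\le 2(N-\n)$. For the fibre, Lemma~\ref{lemma:tw5:4} says that $H^q_c(F^N_{\bar e},\k)$ is free of finite rank and vanishes for odd $q<2(N-\n)$.

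Now I would analyse the $E_2$-page. In the range $q<2(N-\n)$ the sheaf $R^q\omega_!\,\k$ is the local system with stalk $H^q_c(F^N_{\bar e},\k)$; this stalk is free of finite rank and $B$ is simply connected, so the local system is constant and $E_2^{p,q}=H^p_c(B,\k)\otimes_\k H^q_c(F^N_{\bar e},\k)$, which moreover vanishes whenever $q<2(N-\n)$ is odd (zero stalk). Fix now an odd $n<2(N-\n)$ and take $p,q\ge 0$ with $p+q=n$; then $p,q\le n<2(N-\n)$. If $q$ is odd, $E_2^{p,q}=0$ by the fibre vanishing; if $q$ is even, then $p$ is odd, and $H^p(B,\k)=0$ gives $E_2^{p,q}=0$. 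Hence $E_2^{p,q}=0$ along the whole antidiagonal $p+q=n$. Since each $E_\infty^{p,q}$ is a subquotient of $E_2^{p,q}$, all graded pieces of the induced filtration of $H^n_c(C^N,\k)$ vanish, so $H^n_c(C^N,\k)=0$, as claimed.

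The argument is essentially bookkeeping once Lemmas~\ref{lemma:3},~\ref{lemma:tw5:4} and~\ref{lemma:tw5:6} are in hand; the one subtlety worth flagging is that we have no control over $H^q_c(F^N_{\bar e},\k)$ for $q\ge 2(N-\n)$, so the identification of the $E_2$-page is legitimate only for $q<2(N-\n)$ — but this is precisely the range contributing to $H^n_c(C^N,\k)$ for $n<2(N-\n)$, which is why the bound $n<2(N-\n)$ appears in (and is needed for) the statement.
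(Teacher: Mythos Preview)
Your proof is correct and follows essentially the same route as the paper: both feed the fibre bundle $\omega$ of (the compact version of) Lemma~\ref{lemma:3} into the Leray spectral sequence for compactly supported cohomology, identify $E_2^{p,q}\cong H^p((X/R)\rtimes{L}E^N,\k)\otimes_\k H^q_c(F^N_{\bar e},\k)$ for $q<2(N-\n)$ via Lemmas~\ref{lemma:tw5:3} and~\ref{lemma:tw5:4}, and then use Lemmas~\ref{lemma:tw5:6} and~\ref{lemma:tw5:4} to kill the whole antidiagonal $p+q=n$. Your write-up is in fact a bit more explicit than the paper's about the auxiliary points (compactness of the base, constancy of the local system, and the precise range in which the $E_2$-identification is valid), but the argument is the same.
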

\begin{proof}
It follows from Lemmas~\ref{lemma:tw5:3} and~\ref{lemma:tw5:4} that the second page of the Leray spectral
sequence for cohomologies with compact support for the fibre bundle as in Lemma~\ref{lemma:3}
is equal to
$$
E_2^{p,q}=H^p((X/R)\,{}_L\!\!\times\,E^N,\k)\otimes_\k H_c^q(F^N_{\bar e},\k)
$$
if $q<2(N-\n)$. It follows from Lemmas~\ref{lemma:tw5:6} and~\ref{lemma:tw5:4} that $E_r^{p,q}=0$
for any finite $r\ge2$ if $p+q<2(N-\n)$ and either of $p$ or $q$ is odd.
Thus this fact is also true for $r=\infty$.
\end{proof}

\begin{corollary}\label{corollary:tw5:1}
For any $n<2(N-\n)-1$, the map
$$
\begin{tikzcd}
H^n\(((X/R)\,{}_L\!\!\times\,E^N)\times(Y\,{}_Q\!\!\times\,E^N),\k\)
\arrow{r}{(\phi^N)^*}& H^n\(Z\;{}_L\!\times E^N,\k\)
\end{tikzcd}
$$
is surjective.
\end{corollary}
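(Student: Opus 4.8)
The plan is to exhibit $(\phi^N)^*$ as one of the arrows in the long exact sequence attached to a closed--open decomposition, and then to read off surjectivity purely from the vanishing results already established.

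First I would assemble the needed compactness facts. The Stiefel manifold $E^N$ is compact, and $X/R$, $Y$, and $Z:=X\mathop{\times}\limits_R P\mathop{\times}\limits_Q Y$ are compact Hausdorff by~\ref{restr:0} and~\ref{restr:3}; since $L$ and $Q$ are compact Lie groups, the Borel constructions $(X/R)\,{}_L\!\!\times\,E^N$, $Y\,{}_Q\!\!\times\,E^N$, $Z\,{}_L\!\!\times\,E^N$, and their product $B:=((X/R)\,{}_L\!\!\times\,E^N)\times(Y\,{}_Q\!\!\times\,E^N)$ are compact Hausdorff as well. Since $E^N\to P\lq E^N$ is a principal $P$-bundle, Theorem~\ref{theorem:2} applies with $E=E^N$ and shows that $\phi^N$ is a topological embedding; hence $\im\phi^N$, being homeomorphic to the compact space $Z\,{}_L\!\!\times\,E^N$ and sitting inside the Hausdorff space $B$, is a closed subspace of $B$ with open complement $C^N$.

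Next I would invoke the long exact sequence in compactly supported cohomology for the closed subspace $\im\phi^N\subset B$ with open complement $C^N$:
$$
\cdots\to H_c^n(C^N,\k)\to H_c^n(B,\k)\to H_c^n(\im\phi^N,\k)\to H_c^{n+1}(C^N,\k)\to\cdots
$$
Because $B$ and $\im\phi^N\cong Z\,{}_L\!\!\times\,E^N$ are compact, compactly supported cohomology agrees with ordinary cohomology on them, and under the homeomorphism $\phi^N$ the middle map becomes $(\phi^N)^*$. Consequently $(\phi^N)^*$ is surjective in degree $n$ as soon as $H_c^{n+1}(C^N,\k)=0$.

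Finally, fix $n<2(N-\n)-1$ and split by the parity of $n$. If $n$ is odd, then $n\le 2(N-\n)$, so Lemma~\ref{lemma:tw5:6} gives $H^n(Z\,{}_L\!\!\times\,E^N,\k)=0$ and surjectivity is vacuous. If $n$ is even, then $n+1$ is odd and $n+1<2(N-\n)$, so Lemma~\ref{lemma:tw5:5} gives $H_c^{n+1}(C^N,\k)=0$, and the exact sequence above furnishes surjectivity. I do not foresee a real obstacle: the only delicate point is confirming that $\im\phi^N$ is genuinely a \emph{closed} subspace of $B$ (which is exactly why the compactness bookkeeping is needed) so that the open--closed long exact sequence for compactly supported cohomology is legitimate, and then checking that the parity constraints in Lemmas~\ref{lemma:tw5:5} and~\ref{lemma:tw5:6} together cover the entire range $n<2(N-\n)-1$.
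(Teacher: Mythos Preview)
Your proof is correct and follows essentially the same approach as the paper: both use Theorem~\ref{theorem:2} to identify $\phi^N$ with a closed embedding, plug into the long exact sequence for compactly supported cohomology, and conclude via Lemmas~\ref{lemma:tw5:6} and~\ref{lemma:tw5:5}. You are simply more explicit than the paper about the compactness bookkeeping guaranteeing closedness of $\im\phi^N$ and about the parity split; the paper compresses all of this into two sentences.
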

\begin{proof}
By Theorem~\ref{theorem:2}, we get the exact sequence
$$
\begin{tikzcd}
H^n\(((X/R)\,{}_L\!\!\times\,E^N)\times(Y\,{}_Q\!\!\times\,E^N),\k\)
\arrow{r}{(\phi^N)^*}& H^n\(Z\;{}_L\!\times E^N,\k\)\arrow{r}&H_c^{n+1}\(C^N,\k\).
\end{tikzcd}
$$
Now the result follows from Lemmas~\ref{lemma:tw5:6} and~\ref{lemma:tw5:5}.
\end{proof}

\begin{corollary}
The map $\theta$ is surjective.
\end{corollary}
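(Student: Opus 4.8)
The plan is to deduce surjectivity of $\theta$ from that of $(\phi^N)^*$ established in Corollary~\ref{corollary:tw5:1}. Recall from~(\ref{eq:9}) that $\Theta=\phi^*\circ(\times)$, where $\times$ is the cross product, and that $\theta$ is the map induced by $\Theta$ on the quotient $H^\bullet_L(X/R,\k)\otimes_{H_P^\bullet(\pt,\k)}H^\bullet_Q(Y,\k)$. Since the canonical epimorphism $H^\bullet_L(X/R,\k)\otimes_\k H^\bullet_Q(Y,\k)\twoheadrightarrow H^\bullet_L(X/R,\k)\otimes_{H_P^\bullet(\pt,\k)}H^\bullet_Q(Y,\k)$ is surjective, it is enough to prove that $\Theta$ is surjective, and for that it suffices to show that $\times$ and $\phi^*$ are surjective in each degree.

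First I would fix a cohomological degree $n$ and choose $N>\n$ so large that $n<2(N-\n)-1$. Applying Proposition~\ref{lemma:tw5:2} to the left $L$-space $Z=X\mathop{\times}\limits_R P\mathop{\times}\limits_Q Y$, to $X/R$, and to $Y$, together with Proposition~\ref{lemma:tw5:2'} with $m=2$ applied to $((X/R)\,{}_L\!\!\times\,E)\times(Y\,{}_Q\!\!\times\,E)$, all the restriction maps to the corresponding $E^N$-versions are isomorphisms in degree $n$. Since the inclusions $E^N\hookrightarrow E$ are $G$-equivariant and $\phi$ restricts to $\phi^N$, these isomorphisms identify $\phi^*$ in degree $n$ with $(\phi^N)^*$, which is surjective by Corollary~\ref{corollary:tw5:1}, and they identify $\times$ in degree $n$ with its $E^N$-version.

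It then remains to see that the $E^N$-version of $\times$ is an isomorphism in degree $n$, for which I would invoke the Künneth isomorphism~(\ref{eq:coh:12}) with $X_1=(X/R)\,{}_L\!\!\times\,E^N$ and $X_2=Y\,{}_Q\!\!\times\,E^N$; this requires $H^j(Y\,{}_Q\!\!\times\,E^N,\k)$ to be free of finite rank for $j$ up to a bound of order $2(N-\n)$. I would establish this freeness by two successive applications of the Leray spectral sequence, exactly as in the proof of Lemma~\ref{lemma:tw5:4}: the bundle $Y\,{}_Q\!\times\,P\to Q\lq P\cong P/Q$ (using~\ref{restr:4} and~\ref{restr:5}) shows that $H^\bullet(Y\,{}_Q\!\times\,P,\k)$ is free of finite rank and concentrated in even degrees, and then the bundle $\pi_Q\can\colon Y\,{}_Q\!\!\times\,E^N\to P\lq E^N$ of Lemma~\ref{lemma:4} (using~\ref{restr:1},~\ref{restr:2} and Corollary~\ref{corollary:sc}) carries this property over to $Y\,{}_Q\!\!\times\,E^N$ in the required range. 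Hence $\Theta$ is surjective in degree $n$; as $n$ was arbitrary, $\Theta$, and therefore $\theta$, is surjective.

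The one genuinely non-formal point is the freeness of $H^\bullet(Y\,{}_Q\!\!\times\,E^N,\k)$: without it the Künneth comparison~(\ref{eq:coh:12}) would yield only a short exact sequence involving $\mathrm{Tor}$-terms rather than an isomorphism, so that surjectivity of $\times$ could fail. Everything else is the now-familiar bookkeeping with the stabilization ranges of Section~\ref{Stiefel_manifolds}.
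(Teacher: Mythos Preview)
Your argument is correct and follows the same strategy as the paper: reduce surjectivity of $\theta$ to that of $\Theta=\phi^*\circ(\times)$, establish that the cross product is an isomorphism via K\"unneth, and obtain surjectivity of $\phi^*$ from Corollary~\ref{corollary:tw5:1} by passing to $E^N$ with Proposition~\ref{lemma:tw5:2'}. The only presentational difference is that the paper invokes K\"unneth directly at the $E$-level (citing assumption~\ref{restr:4} and leaving the required freeness of the equivariant cohomology implicit), whereas you first pass to $E^N$ and then spell out the freeness of $H^\bullet(Y\,{}_Q\!\times E^N,\k)$ via the two Leray arguments; your version simply makes explicit what the paper takes for granted.
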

\begin{proof} Under our assumption~\ref{restr:4}, the left map (cross product) in~(\ref{eq:9}) is an isomorphism
by the K\"unneth formula. Therefore, it suffices to prove that $\phi^*$ is surjective.
In each degree, this facts follows from Corollary~\ref{corollary:tw5:1}
and Proposition~\ref{lemma:tw5:2'} for $N$ big enough.
\end{proof}

\begin{lemma}\label{lemma:7} In each degree, the homomorphism $\theta$ induces a map between free $\k$-modules
of the same finite rank.
\end{lemma}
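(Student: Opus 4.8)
The plan is to prove separately that, in every degree, the domain and the codomain of $\theta$ are free $\k$-modules of finite rank, and that their Poincar\'e series --- the generating functions of the graded $\k$-ranks --- coincide; since $\theta$ preserves degree, this is exactly the assertion (in fact the map $\theta$ itself plays no role in this lemma). Throughout I fix a degree and work with $E^N$ for $N$ large relative to it, identifying all the equivariant cohomologies (and their compact module structures) with those of the $E^N$-level Borel constructions via Propositions~\ref{lemma:tw5:2} and~\ref{lemma:tw5:2'} and Remark~\ref{remark:2.5}; the passage $N\to\infty$ is routine. Every Borel construction below is compact Hausdorff, so compact-support and ordinary cohomology agree and the relevant Leray spectral sequences coincide with the Serre ones.

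\emph{The codomain.} By Proposition~\ref{proposition:pb:1}, $Z\,{}_L\!\!\times\,E^N\to L\lq E^N$ is a fibre bundle with fibre $Z$, and $L\lq E^N$ is simply connected by Corollary~\ref{corollary:sc} (using~\ref{restr:1}). Its Leray spectral sequence has $E_2^{p,q}=H^p_L(\pt,\k)\otimes_\k H^q(Z,\k)$, and both factors are free of finite rank and vanish in odd degree --- the first by~\ref{restr:2}, the second by~(\ref{eq:tw5:1}) and~\ref{restr:4}. Hence $E_2^{p,q}=0$ for odd $p+q$, so every differential has vanishing source or target, the sequence degenerates, and the (finite) filtration on $H^n_L(Z,\k)$ has graded pieces $E_\infty^{p,q}=E_2^{p,q}$ that are finitely generated free --- hence projective --- $\k$-modules, so it splits. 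Thus $H^n_L(Z,\k)$ is free of finite rank, and by~(\ref{eq:tw5:1}) the Poincar\'e series of $H^\bullet_L(Z,\k)$ is the product of those of $H^\bullet_L(\pt,\k)$, $H^\bullet(X/R,\k)$, $H^\bullet(P/Q,\k)$ and $H^\bullet(Y,\k)$.

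\emph{The domain.} The same degeneration-and-splitting argument applied to $(X/R)\,{}_L\!\!\times\,E^N\to L\lq E^N$ shows that $H^\bullet_L(X/R,\k)$ is free of finite rank over $\k$ in every degree, with Poincar\'e series the product of those of $H^\bullet_L(\pt,\k)$ and $H^\bullet(X/R,\k)$. I claim moreover that $H^\bullet_Q(Y,\k)$ is \emph{free as a left $H^\bullet_P(\pt,\k)$-module}, on a homogeneous basis whose degrees have generating function equal to the product of the Poincar\'e series of $H^\bullet(P/Q,\k)$ and $H^\bullet(Y,\k)$. Indeed, by Lemma~\ref{lemma:4} the map $\pi_Q\can:Y\,{}_Q\!\!\times\,E^N\to P\lq E^N$ is a fibre bundle with fibre $Y\,{}_Q\times P$, and by the definition of the module structure in Section~\ref{The_tensor_product} this $H^\bullet_P(\pt,\k)$-action is cup product with pull-backs along $\pi_Q\can$. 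Its base is simply connected (again~\ref{restr:1}, Corollary~\ref{corollary:sc}); its fibre $Y\,{}_Q\times P$ is compact, and, running the degeneration-and-splitting argument once more on the bundle $Y\,{}_Q\times P\to Q\lq P\cong P/Q$ with fibre $Y$ (here $P\to Q\lq P$ is a principal $Q$-bundle, $Q$ being closed in $P$), using~\ref{restr:4} and~\ref{restr:5}, its cohomology is free of finite rank, concentrated in even degree, with Poincar\'e series the stated product. Degeneration of the Leray spectral sequence of $\pi_Q\can$ --- whose $E_2$-page is again concentrated in even total degrees --- makes the restriction $H^\bullet(Y\,{}_Q\!\!\times\,E^N,\k)\to H^\bullet(Y\,{}_Q\times P,\k)$ surjective, so by the Leray--Hirsch theorem $H^\bullet(Y\,{}_Q\!\!\times\,E^N,\k)$ is a free $H^\bullet(P\lq E^N,\k)=H^\bullet_P(\pt,\k)$-module on any lift of a homogeneous $\k$-basis of $H^\bullet(Y\,{}_Q\times P,\k)$; choosing such lifts compatibly in $N$ yields the claim in the limit. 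Consequently $H^\bullet_Q(Y,\k)$ is a finite direct sum of degree-shifted copies of $H^\bullet_P(\pt,\k)$ (finitely many, as $Y$ and $P/Q$ are compact), so $H^\bullet_L(X/R,\k)\otimes_{H^\bullet_P(\pt,\k)}H^\bullet_Q(Y,\k)$ is the corresponding finite direct sum of degree-shifted copies of $H^\bullet_L(X/R,\k)$. In particular it is free of finite rank in every degree, and its Poincar\'e series is again the product of those of $H^\bullet_L(\pt,\k)$, $H^\bullet(X/R,\k)$, $H^\bullet(P/Q,\k)$ and $H^\bullet(Y,\k)$ --- exactly what was found for the codomain. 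Note that this uses only the left $H^\bullet_P(\pt,\k)$-module structure on $H^\bullet_Q(Y,\k)$, and nothing about the twisted right action on $H^\bullet_L(X/R,\k)$.

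\emph{The main obstacle.} The only genuinely new point is the freeness of $H^\bullet_Q(Y,\k)$ over $H^\bullet_P(\pt,\k)$: one must treat the Leray spectral sequence of $\pi_Q\can$ as a spectral sequence of $H^\bullet(P\lq E^N,\k)$-modules, apply Leray--Hirsch, and verify that the resulting free bases can be chosen coherently as $N$ grows, so that they descend to $E$. Everything else merely iterates the degeneration-plus-splitting computation already carried out in Lemmas~\ref{lemma:tw5:6} and~\ref{lemma:tw5:4}.
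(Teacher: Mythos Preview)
Your proof is correct and follows essentially the same approach as the paper: both arguments show that the domain and codomain are, as graded $\k$-modules, isomorphic to $H_L^\bullet(\pt,\k)\otimes_\k H^\bullet(X/R,\k)\otimes_\k H^\bullet(P/Q,\k)\otimes_\k H^\bullet(Y,\k)$ via Leray--Hirsch/spectral sequence degeneration, the key step being the freeness of $H_Q^\bullet(Y,\k)$ over $H_P^\bullet(\pt,\k)$. The only cosmetic difference is that the paper presents this as a chain of graded $\k$-module isomorphisms (splitting the freeness step into $H_Q^\bullet(Y,\k)\cong H_Q^\bullet(\pt,\k)\otimes_\k H^\bullet(Y,\k)$ followed by $H_Q^\bullet(\pt,\k)\cong H_P^\bullet(\pt,\k)\otimes_\k H^\bullet(P/Q,\k)$), whereas you combine these via Lemma~\ref{lemma:4} and phrase the conclusion as an equality of Poincar\'e series.
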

\begin{proof} The result follows from the following computation:
\begin{multline*}
H^\bullet_L(X/R,\k)\otimes_{H_P^\bullet(\pt,\k)}H^\bullet_Q(Y,\k)\cong
H_L^\bullet(X/R,\k)\otimes_{H_P^\bullet(\pt,\k)}H_Q^\bullet(\pt,\k)\otimes_\k H^\bullet(Y,\k)\\[6pt]
\cong
H_L^\bullet(X/R,\k)\otimes_{H_P^\bullet(\pt,\k)}H^\bullet_P(\pt,\k)\otimes_\k H^\bullet(P/Q,\k)\otimes_\k H^\bullet(Y,\k)\\[6pt]
\cong H_L^\bullet(X/R,\k)\otimes_\k H^\bullet(P/Q,\k)\otimes_\k H^\bullet(Y,\k)\\[6pt]
\cong H_L^\bullet(\pt,\k)\otimes_\k H^\bullet(X/R,\k)\otimes_\k H^\bullet(P/Q,\k)\otimes_\k H^\bullet(Y,\k)\\
\cong H_L^\bullet(\pt,\k)\otimes_\k H^\bullet\(X\mathop{\times}\limits_R P\mathop{\times}\limits_Q Y,\k\)\cong
H_L^\bullet\(X\mathop{\times}\limits_R P\mathop{\times}\limits_Q Y,\k\).
\end{multline*}

\end{proof}

\begin{theorem}\label{theorem:1}
Suppose that
conditions~\ref{restr:-1}--\ref{restr:6}
are satisfied. Then $\theta$ is an isomorphism of rings and left $H^\bullet_L(\pt,\k)$-modules.
If $M$ is another closed subgroup of $G$ and $B:Y\to G/M$
is a $Q$-equivariant continuous map, then $\theta$ is an isomorphism of rings and
$H^\bullet_L(\pt,\k)$-$H^\bullet_M(\pt,\k)$-bimodules.
\end{theorem}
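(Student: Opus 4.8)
The plan is to extract the theorem from material already in place. By Theorem~\ref{lemma:6}, $\theta$ is a homomorphism of rings and of left $H^\bullet_L(\pt,\k)$-modules for the canonical actions, and, once a $Q$-equivariant continuous map $B:Y\to G/M$ is given, also a homomorphism of right $H^\bullet_M(\pt,\k)$-modules for the actions twisted by $B$ and $\alpha B$. Since a bijective homomorphism of rings (resp.\ of modules, of bimodules) is automatically an isomorphism, all that remains is to show that $\theta$ is bijective. Surjectivity of $\theta$ was proved in the Corollary above, and by Lemma~\ref{lemma:7} the map $\theta$ is, in each cohomological degree, a homomorphism between free $\k$-modules of one and the same finite rank; so the whole theorem comes down to the elementary algebraic fact that a surjective map between free $\k$-modules of equal finite rank is an isomorphism.

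To make this precise I would argue degreewise. Fix $n$. By Lemma~\ref{lemma:7} the $n$th graded pieces of $H^\bullet_L(X/R,\k)\otimes_{H_P^\bullet(\pt,\k)}H^\bullet_Q(Y,\k)$ and of $H_L^\bullet(X\mathop{\times}\limits_R P\mathop{\times}\limits_Q Y,\k)$ are free $\k$-modules of the same finite rank $r_n$, and $\theta$ is surjective in degree $n$. Choosing $\k$-bases on both sides identifies $\theta$ in degree $n$ with a surjective $\k$-linear endomorphism $\psi$ of $\k^{r_n}$.

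Next I would invoke the standard fact that a surjective endomorphism of a finitely generated module over a commutative ring is an isomorphism. Regard $\k^{r_n}$ as a module over $\k[t]$ with $t$ acting by $\psi$; surjectivity of $\psi$ gives $(t)\k^{r_n}=\k^{r_n}$, so by the determinant trick (Cayley--Hamilton for modules applied to $\mathrm{id}$ and the ideal $(t)$) there is $q\in\k[t]$ with $\bigl(1-tq(t)\bigr)$ annihilating $\k^{r_n}$. If $\psi(m)=0$ then $m=\bigl(tq(t)\bigr)(m)=q(\psi)\bigl(\psi(m)\bigr)=0$, so $\psi$ is injective, hence bijective. Therefore $\theta$ is bijective in every degree, and, combined with Theorem~\ref{lemma:6}, this gives the isomorphism of rings and of left $H^\bullet_L(\pt,\k)$-modules, and, when $B$ is given, also the isomorphism of $H^\bullet_L(\pt,\k)$-$H^\bullet_M(\pt,\k)$-bimodules with the right actions twisted by $B$ and $\alpha B$.

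I do not expect a genuine obstacle to remain: the real work — the spectral-sequence vanishing behind surjectivity and the rank bookkeeping in Lemma~\ref{lemma:7} — has already been carried out. The one point that requires care is that $\k$ is assumed only commutative, not a field or a principal ideal domain, so injectivity cannot be read off from a naive dimension count and one must pass through the determinant trick; this is also precisely where the finiteness built into conditions~\ref{restr:-1}--\ref{restr:6}, guaranteeing that every cohomology group in sight is free of finite rank, is indispensable.
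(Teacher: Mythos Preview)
Your proposal is correct and follows essentially the same route as the paper: surjectivity from the Corollary, equal finite rank from Lemma~\ref{lemma:7}, then the fact that a surjective endomorphism of a finitely generated module over a commutative ring is bijective, with the structural claims supplied by Theorem~\ref{lemma:6}. The paper simply cites \cite[Theorem~3.6]{Rotman} for this last algebraic step, whereas you spell out the determinant trick explicitly.
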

\begin{proof}
Composing $\theta$ with the isomorphism of Lemma~\ref{lemma:7}, we get a surjective
homomorphism from a finitely generated $\k$-module to itself. Now it suffices
to apply~\cite[Theorem~3.6]{Rotman}, to prove that $\theta$ is bijective.
The remaining claims follow from Lemma~\ref{lemma:6}.
\end{proof}

\section{Basic examples}\label{Basic_examples}

\subsection{Equivariant cohomology of the flag variety}\label{Equivariant_cohomology_of_the_flag_variety} Suppose that the order $|W|$ of the Weyl group
is invertible in $\k$. 
The map $\pi_K:K\lq E\to G\lq E$ given by $\pi_K(Ke)=Ge$
induces the isomorphism
\begin{equation}\label{eq:tw5:4}
\pi_K^*:H^\bullet_G(\pt,\k)\ito H_K^\bullet(\pt,\k)^W,
\end{equation}
where the right-hand side is the subring of $W$-invariants (see, the proof of~\cite[Proposition~1]{Brion}).

We are going to apply the results of the previous sections to the following data
$X=G$, $Y=G/K$, $P=R=Q=G$, $L=M=K$, and $\alpha:X\to G$ and $B:Y\to G/M$ be the identity maps on $G$ and $G/K$, respectively.
The induced map $A:X/R\to G/R$ is then the identity map on $G/G$.
By Theorem~\ref{theorem:1}, condition~\ref{restr:2} being guaranteed by~(\ref{eq:tw5:4}), the homomorphism $\theta$ is an isomorphism and we get 
\begin{equation}\label{eq:tw5:3}
H^\bullet_K\(G\mathop{\times}\limits_GG\mathop{\times}\limits_G G/K,\k\)\cong H_K^\bullet(G/G,\k)\otimes_{H^\bullet_G(\pt,\k)}H_G^\bullet(G/K,\k).
\end{equation}

Let us look at the left-hand side. We have the following commutative diagram\footnote{Recall that $\alpha B:G\mathop{\times}\limits_GG\mathop{\times}\limits_G G/K\to G/K$ is the map $[g_1:g_2:g_3\rr\mapsto g_1g_2g_3K$ according to~\ref{eq:tw:7}.}:
$$
\begin{tikzcd}
\(G\mathop{\times}\limits_GG\mathop{\times}\limits_G G/K\)\,{}_K\!\!\times\,E\arrow{r}{v_{\alpha B}}\arrow{d}[swap]{\wr}& K\lq E\\
G/K\,{}_K\!\!\times\,E\arrow{ur}[swap]{v_B}
\end{tikzcd}
$$
where the vertical isomorphism is given by $K([g_1:g_2:g_3\rr,e)\mapsto K(g_1g_2g_3K,e)$.
Hence by Lemma~\ref{lemma:pull-back}, we get the isomorphism of $H^\bullet_K(\pt,\k)$-$H^\bullet_K(\pt,\k)$-bimodules
$$
H^\bullet_K\(G\mathop{\times}\limits_GG\mathop{\times}\limits_G G/K,\k\)\cong H^\bullet_K(G/K,\k)
$$
where both left actions are canonical and the right actions on the left-hand side and the right-hand side
are the actions twisted by $\alpha B$ and $B$, respectively.

Let us look at the first factor of the tensor product in the right-hand side of~(\ref{eq:tw5:3}).
Considering the commutative diagram
$$
\begin{tikzcd}
G/G\,{}_K\!\!\times\,E\arrow{r}{v_A}\arrow{d}[swap]{\can}{\wr}& G\lq E\\
K\lq E\arrow{ur}[swap]{\pi_K}
\end{tikzcd}
$$
we get by Lemma~\ref{lemma:pull-back} the isomorphism of $H_K^\bullet(\pt,\k)$-$H_G^\bullet(\pt,\k)$-bimodules
$$
H_K^\bullet(G/G,\k)\cong H_K^\bullet(\pt,\k),
$$
where the left actions are canonical, the right action on the left-hand side is the action twisted by $A$,
and the right action on the right-hand side is as follows:
$$
mb=m\cup\pi_K^*(b)
$$
for $m\in H_K^\bullet(\pt,\k)$ and $b\in H_G^\bullet(\pt,\k)$.

Finally, let us look at the second factor of the tensor product in the right-hand side of~(\ref{eq:tw5:3}).
It is easy to note that the map $v_B:(G/K)\,{}_G\!\!\times\,E\to K\lq E$ is a homeomorphism.
This fact was already mentioned in~\cite[(1.5)]{Jantzen}.
Considering the commutative diagram
$$
\begin{tikzcd}[column sep=50pt]
G/K\,{}_G\!\!\times\,E\arrow{r}{\can}\arrow{d}{\wr}[swap]{v_B}& G\lq E\\
K\lq E\arrow{ur}[swap]{\pi_K}
\end{tikzcd}
$$
we get the isomorphism $(v_B^*)^{-1}$ of $H_G^\bullet(\pt,\k)$-$H_K^\bullet(\pt,\k)$-bimodules
$$
H_G^\bullet(G/K,\k)\cong H_K^\bullet(\pt,\k),
$$
where the left and the right actions on the left-hand side are the canonical action and
the action twisted by $B$, respectively, the right action on the right-hand side
is canonical and the left action on the right-hand side is as follows
$$
am=\pi_K^*(a)\cup m
$$
for any $a\in H_G^\bullet(\pt,\k)$ and $m\in H_K^\bullet(\pt,\k)$.

Applying all these isomorphisms, we obtain from~(\ref{eq:tw5:3}), the isomorphism of
$H_K^\bullet(\pt,\k)$-$H_K^\bullet(\pt,\k)$-bimodules (and rings)
$$
H^\bullet_K\(G/K,\k\)\cong H_K^\bullet(\pt,\k)\otimes_{H^\bullet_G(\pt,\k)}H_K^\bullet(\pt,\k)
$$
under the left and right actions described above.
Applying~(\ref{eq:tw5:4}), we get the isomorphism of $H_K^\bullet(\pt,\k)$-$H_K^\bullet(\pt,\k)$-bimodules
$$
H^\bullet_K\(G/K,\k\)\cong H_K^\bullet(\pt,\k)\otimes_{H^\bullet_K(\pt,\k)^W}H_K^\bullet(\pt,\k),
$$
where $H_K^\bullet(\pt,\k)$ and $H^\bullet_K(\pt,\k)^W$ act
canonically.

\subsection{Standard bimodules}\label{Standard_bimodules} Let us denote
\begin{equation}\label{eq:12}
\R=H^\bullet_K(\pt,\k).
\end{equation}
This is the $\R$-$\R$-bimodule with respect to the cup product. 
For any $w\in W$, we denote by $\R_w$ the $\R$-$\R$-bimodule equal to $\R$ as an abelian group
whose left action $\cdot$ is untwisted and right action $\cdot_w$ is twisted by $w$:
$$
r'\cdot r=r'r,\quad r\cdot_w r''=rw(r'').
$$

The construction of this bimodule naturally arises as a special case of the twisted action described in Section~\ref{Twisted action}.
Indeed, let $A_w:\pt\to  G/K$ be the map with the value $wK$. Then we get
the map $v_{A_w}:K\lq E\cong\pt\,{}_K\!\!\times\,E\to K\lq E$, which coincides with the map $K\lq \rho_{w^{-1}}$
described in Section~\ref{Semisimple_groups}. By~(\ref{eq:tw5:5}), we get that the right action on $H_K^\bullet(\pt,\k)$ on itself twisted by $A_w$
coincodes with $\cdot_w$.

Now let us compute the tensor product of these bimodules with the help of the isomorphism~$\theta$.
Let $X=K$, $Y=\pt$, $L=R=P=Q=N=K$ and $\alpha:X\to G$ be the map defined by $\alpha(k)=\dot{w}k$.
The last map becomes $K$-$K$-equivarinat if we consider the right action of $K$ on $X$ by multiplication
and define the left action by $k'\ast k=\dot{w}^{-1}k'\dot{w}k$. Using this map $\alpha$,
we define the map $A:\pt\cong K/K\to G/K$ by~(\ref{eq:tw5:6}). Obviously, $A=A_w$.

Let us choose any $w'\in W$ and set $B=A_{w'}$. By~(\ref{eq:tw:7}), we get
$$
\alpha B([k:1:\pt])=\dot wk\dot w'K=\dot w\dot w'(\dot w'^{-1}k\dot w')K=\dot w\dot w'K=ww'K.
$$
Hence $\alpha B=A_{ww'}$ under the identification $K\mathop{\times}\limits_KK\mathop{\times}\limits_K\pt\cong\pt$.
By Theorem~\ref{theorem:1}, the homeomorphism $\theta$ is an isomorphism and it
reads as the classical $\R$-$\R$-bimodule isomorphism
$$
\R_w\otimes_\R\R_{w'}\cong\R_{ww'}.
$$

\section{Bott-Samelson varieties}\label{Bott-Samelson varieties}

\subsection{Computation of the equivariant cohomology}\label{Computation_of_the_equivariant_cohomology}
Let $G$ be a semisimple compact Lie group. We will use
abbreviation~(\ref{eq:12}).
Additionally, we assume that $2$
is invertible in $\k$. For any reflection $s$, we have two rings
$$
\R(s)=H^\bullet_{G_s}(\pt,\k),\quad \R^s=\{r\in\R\suchthat s(r)=r\}.
$$

\begin{lemma}\label{lemma:8} For any reflection $s$, the quotient map $\pi_s:K\lq E\to G_s\lq E$
induces the isomorphism $\pi_s^*:\R(s)\ito \R^s$.
\end{lemma}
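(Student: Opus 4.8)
The plan is to realize $\pi_s^\star$ as a special case of the isomorphism already established in Lemma~\ref{lemma:pull-back} (the pull-back version) combined with the identification of $G_s\lq E$ as an appropriate Borel construction. First I would observe that, since $G_s$ is a closed subgroup of $G$ containing $K$, the quotient map $K\lq E\to G_s\lq E$ is, up to canonical homeomorphism, exactly the canonical projection
$$
\can\colon (G_s/K)\,{}_{G_s}\!\!\times\,E\to G_s\lq E,
$$
where we use the homeomorphism $(G_s/K)\,{}_{G_s}\!\!\times\,E\cong K\lq E$ sending $G_s(g_sK,e)$ to $K g_s^{-1}e$, analogous to the identification $v_B$ used for $G/K$ in Section~\ref{Equivariant_cohomology_of_the_flag_variety}. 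Under this identification $H^\bullet(K\lq E,\k)=H^\bullet_K(\pt,\k)=\R$ and $H^\bullet(G_s\lq E,\k)=H^\bullet_{G_s}(\pt,\k)=\R(s)$, and the map $\pi_s^\star$ becomes the pull-back $\can^\star\colon\R(s)\to\R$.

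Next I would identify the image. Since $G_s$ is generated by the torus $K$ and the image of $\phi_s\colon\SU_2\to G$, the Weyl group of $G_s$ relative to $K$ is the two-element group $\{1,s\}$, and $G_s/K$ is simply connected (it is a copy of $\SU_2/(S^1)\cong S^2$, or more precisely a sphere bundle story). Thus the same argument that gives~(\ref{eq:tw5:4}) applies verbatim with $G$ replaced by $G_s$: the pull-back $\pi_s^\star$ is injective with image the ring of invariants of the $\{1,s\}$-action on $\R$, which is precisely $\R^s$. Here I would invoke the reference cited for~(\ref{eq:tw5:4}), namely the proof of~\cite[Proposition~1]{Brion}, applied to the group $G_s$; the hypothesis needed is that the order of the Weyl group of $G_s$, which equals $2$, is invertible in $\k$, and this is exactly the standing assumption that $2$ is invertible in $\k$. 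The $W$-action there is the one from~(\ref{eq:tw5:5}), and its restriction to $\{1,s\}$ is the action defining $\R^s$.

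The main obstacle, and the only point requiring care, is verifying that the $\{1,s\}$-action on $H^\bullet_K(\pt,\k)$ coming from the transfer/invariant-theory argument for $G_s$ coincides on the nose with the action $r\mapsto s(r)$ used to define $\R^s$ in the statement; in other words, that the reflection $s=\omega_\alpha\in W$ acts through $G_s$ the way~(\ref{eq:tw5:5}) prescribes. This is a compatibility of lifts: one must check that a lift of $s$ to $N_G(K)$ can be chosen inside $N_{G_s}(K)$, which is clear since $\phi_s(\SU_2)\subset G_s$ already contains a representative of the nontrivial Weyl element of $\SU_2$. Once this is pinned down, the two descriptions of the action agree and the isomorphism $\pi_s^\star\colon\R(s)\ito\R^s$ follows. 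Everything else — well-definedness of the homeomorphisms, freeness and finite-rank hypotheses — is routine given Proposition~\ref{proposition:3} and the results of Section~\ref{The_case_of_a_compact_Lie_group}.
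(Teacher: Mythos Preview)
Your proposal is correct and takes essentially the same approach as the paper: the core of the argument is to apply the proof of~\cite[Proposition~1]{Brion} to the group $G_s$, using that its Weyl group is $\{1,s\}$ and that $2$ is invertible in $\k$. The paper's proof is a single sentence to this effect, whereas you additionally spell out the identification of $\pi_s^\star$ with a canonical projection and verify the compatibility of the $s$-action; these elaborations are correct but not strictly needed.
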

\begin{proof}
The result follows from the proof of%
~\cite[Proposition~1]{Brion} applied to the group $G_s$, as its Weyl group consists of $2$ elements and $2$ is invertible in~$\k$.
\end{proof}

For a sequence of reflections $s=(s_1,\ldots,s_n)$, using the notation of Section~\ref{More quotient products},
we define the following space
$$
\BS(s)=G_{s_1}\mathop{\times}\limits_K{G_{s_2}}\mathop{\times}\limits_K\cdots\mathop{\times}\limits_KG_{s_n}/K
$$
called the {\it Bott-Samelson variety} for sequence $s$.
The sequence $s$ can be empty, in which case $\BS(s)$ is just the singleton.
In what follows, we use the natural identification $\pt\,{}_K\!\times E\cong K\lq E$ given by $K(pt,e)=Ke$.

The torus $K$ acts on the left on $\BS(s)$ (via the first factor for nonempty $s$).
Therefore, we can consider the equivariant cohomology denoted as follows:
$$
H(s)=H_K^\bullet(\BS(s),\k).
$$
It is a left $\R$-module with respect to the canonical action. On the other hand, there is the $K$-equivariant map
\begin{equation}\label{eq:30}
A_s:\BS(s)\to G/K
\end{equation}
given by $[g_1:\cdots:g_{n-1}:g_n\rr\mapsto g_1\cdots g_{n-1}g_nK$.
Therefore, $H(s)$ is also a right $\R$-module
with the action twisted by $A_s$ according to Section~\ref{Twisted action}.

We represent $H(s)$ graphically by

\begin{center}
\begin{tikzpicture}
\draw[dashed] (0,0) -- (3.5,0);
\draw[red,fill=red] (0.5,0) circle(0.04) node[anchor=south]{\footnotesize$s_1$};
\draw[blue,fill=blue] (1.25,0) circle(0.04) node[anchor=south]{\footnotesize$s_2$};
\draw[green,fill=green] (3,0) circle(0.04) node[anchor=south]{\footnotesize$s_n$};
\draw (2.15,0) node[anchor=south]{\footnotesize$\cdots$};
\end{tikzpicture}
\end{center}

\smallskip

\noindent
where each reflection is marked by a separate color. 

We consider the following map $\phi_s:\BS(s)\,{}_K\!\!\times E\to(K\lq E)^{n+1}$:
$$
K([g_1:\cdots:g_{n-1}:g_n\rr,e)\mapsto(Ke,Kg_1^{-1}e,\ldots,K(g_1\cdots g_n)^{-1}e).
$$
This map can be constructed inductively with the help of the embedding $\phi$ constructed
in Section~\ref{The_embedding} as follows.
If $s=\emptyset$, then $\phi_s$ is the identity map. 

Now suppose that $s$ is not empty. 
Then we consider the following diagram:
\begin{equation}\label{eq:10}
\begin{tikzcd}
\BS(s)\,{}_K\!\times E\arrow{d}[swap]{\phi_s}\arrow[equal]{r}& \(G_{s_1}\mathop{\times}\limits_K{G_{s_2}}\mathop{\times}\limits_K\cdots\mathop{\times}\limits_KG_{s_{n-1}}\mathop{\times}\limits_KG_{s_n}\mathop{\times}\limits_K\pt\){}\,{}_K\!\times E\arrow{d}{\phi}\\
(K\lq E)^{n+1}&\(\BS(s')\,{}_K\!\times E\)\times(\pt\,{}_K\!\times E)\arrow{l}[swap]{\phi_{s'}\times\id}
\end{tikzcd}
\end{equation}
Here $s'=(s_1,\ldots,s_{n-1})$ is the truncated sequence and the map $\phi$
is the map defined in Section~\ref{The_embedding} for the following
set of data%
:
$$
X=G_{s_1}\mathop{\times}\limits_K{G_{s_2}}\mathop{\times}\limits_K\cdots\mathop{\times}\limits_KG_{s_{n-1}},\quad Y=\pt,\quad P=G_{s_n},\quad L=R=Q=K
$$
and $\alpha:X\to G$ given by $[g_1:\cdots:g_{n-1}]\mapsto g_1\cdots g_{n-1}$.
We claim that this diagram is commutative. Indeed, we get
$$
\hspace{-110pt}
\begin{tikzcd}
K([g_1:g_2:\cdots:g_n\rr,e)=K([\underbrace{g_1:g_2:\cdots:g_{n-1}}_x:\underbrace{g_n}_p:\underbrace{pt}_y],e)\arrow[mapsto]{r}{\phi}&[-10pt]{}
\end{tikzcd}
$$

\vspace{-8pt}

$$\hspace{-167pt}
\Big(K([g_1:g_2:\cdots:g_{n-1}\rr,e),K(pt,g_n^{-1}(g_1\cdots g_{n-1})^{-1}e)\Big)=
$$
$$
\begin{tikzcd}
\Big(K([g_1:g_2:\cdots:g_{n-1}\rr,e),K(g_1\cdots g_{n-1}g_n)^{-1}e\Big)\arrow[mapsto]{r}{\phi_{s'}\times\id}&[8pt](Ke,Kg_1^{-1}e,\ldots,K(g_1\cdots g_n)^{-1}e)
\end{tikzcd}
$$

\vspace{-8pt}

$$\hspace{300pt}
=\phi_s\Big(K([g_1:g_2:\cdots:g_n\rr,e)\Big).
$$
Here we underbraced the variables $x$, $p$, and $y$ as in~(\ref{eq:5}).

\begin{lemma}
The map $\phi_s$ is a topological embedding.
\end{lemma}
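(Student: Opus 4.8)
The plan is to prove the statement by induction on the length $n=|s|$, exploiting the recursive description of $\phi_s$ recorded in the commutative diagram~(\ref{eq:10}), whose commutativity has just been verified.

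The base case $n=0$ is immediate: then $\BS(s)$ is a singleton and $\phi_s=\id$, which is trivially a topological embedding. For the inductive step I would assume that $\phi_{s'}$ is a topological embedding, where $s'=(s_1,\ldots,s_{n-1})$ is the truncated sequence, and read off from~(\ref{eq:10}) that $\phi_s$ is the composition
$$
\BS(s)\,{}_K\!\times E\xrightarrow{\ \phi\ }\big(\BS(s')\,{}_K\!\times E\big)\times\big(\pt\,{}_K\!\times E\big)\xrightarrow{\ \phi_{s'}\times\id\ }(K\lq E)^{n+1},
$$
where $\phi$ is the map of Section~\ref{The_embedding} associated with the data $X=G_{s_1}\mathop{\times}\limits_K\cdots\mathop{\times}\limits_K G_{s_{n-1}}$, $Y=\pt$, $P=G_{s_n}$, $L=R=Q=K$, and $\alpha([g_1:\cdots:g_{n-1}])=g_1\cdots g_{n-1}$.

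To see that $\phi$ is a topological embedding I would invoke Theorem~\ref{theorem:2}: since $G_{s_n}$ is a closed subgroup of the compact Lie group $G$ (assumption~\ref{restr:0}, cf. Remark~\ref{remark:1}), the quotient map $E\to G_{s_n}\lq E$ is a principal $G_{s_n}$-bundle, which is precisely the hypothesis of that theorem. The map $\phi_{s'}\times\id$ is a topological embedding as well: $\phi_{s'}$ is one by the inductive hypothesis, the factor $\id$ on $\pt\,{}_K\!\times E\cong K\lq E$ is a homeomorphism, and a Cartesian product of two topological embeddings is a topological embedding. Since a composition of topological embeddings is again one, $\phi_s$ is a topological embedding, completing the induction.

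I do not expect a serious obstacle here: the real work was already carried out in Section~\ref{The_embedding} in establishing Theorem~\ref{theorem:2}, and the present argument is essentially assembly. The only points that call for a moment's attention are routine, namely that the data $X,Y,P,L,R,Q,\alpha$ above satisfy conditions~\ref{p:1}--\ref{p:5} of Section~\ref{The_embedding} — in particular that the left and right $K$-actions on $X$ commute and that $\alpha$ is $R$-$L$-equivariant, both of which are clear from the explicit formulas — and that $E\to G_{s_n}\lq E$ is indeed a principal bundle, which follows from~\ref{restr:0} and the construction of $E$ in Section~\ref{The_case_of_a_compact_Lie_group}.
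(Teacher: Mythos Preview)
Your proof is correct and follows exactly the paper's approach: the paper's proof consists of the single sentence ``The result follows inductively from diagram~(\ref{eq:10}) and Theorem~\ref{theorem:2},'' and you have simply unpacked this induction in detail.
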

\begin{proof}
The result follows inductively from diagram~(\ref{eq:10}) and Theorem~\ref{theorem:2}.
\end{proof}

Using this embedding, we can define the map $\Theta_s$ as the following composition:
$$
\begin{tikzcd}
\R^{\otimes_\k n+1}\arrow{r}{\times}&H^\bullet((K\lq E)^{n+1},\k)\arrow{r}{\phi_s^*}&H(s),
\end{tikzcd}
$$
where the first map is the cross product, see Section~\ref{The_tensor_product}.
We consider the following commutative diagram:
\begin{equation}\label{eq:11}
\begin{tikzcd}
\R^{\otimes_\k n+1}\arrow{d}[swap]{\Theta_s}\arrow{r}{\Theta_{s'}\otimes\id}& H(s')\otimes_\k \R\arrow{d}{\Theta}\\
H(s)&H^\bullet_K\(G_{s_1}\mathop{\times}\limits_K\cdots\mathop{\times}\limits_K G_{s_n}\mathop{\times}\limits_K\pt,\k\)\arrow[equal]{l}
\end{tikzcd}
\end{equation}
where $\Theta$ is map~(\ref{eq:9}) corresponding to $\phi$. 
This diagram is commutative as diagram~(\ref{eq:10}) is so. Indeed, we get
$$\hspace{-100pt}
\begin{tikzcd}
a_1\otimes\cdots\otimes a_{n+1}\arrow[mapsto]{r}{\Theta_{s'}\otimes\id}&[10pt]\phi_{s'}^*(\pr_1^*(a_1)\cup\cdots\cup\pr_{n+1}^*(a_1))\otimes a_{n+1}\arrow[mapsto]{r}{\Theta}&{}
\end{tikzcd}
$$
$$\hspace{200pt}
\phi^*\big(\pr_1^*\phi_{s'}^*(\pr_1^*(a_1)\cup\cdots\cup\pr_{n+1}^*(a_1))\cup\pr_2^*(a_{n+1})\big),
$$
$$\hspace{-175pt}
\begin{tikzcd}
a_1\otimes\cdots\otimes a_{n+1}\arrow[mapsto]{r}{\Theta_s}&[-4pt]\phi_s^*(\pr_1^*(a_1)\cup\cdots\cup\pr_{n+1}^*(a_{n+1})).
\end{tikzcd}
$$
Hence we need to prove that
$$
\pr_i\phi_{s'}\pr_1\phi=\pr_i\phi_s\text{ for }1\le i\le n,\quad
\pr_2\phi=\pr_{n+1}\phi_s.
$$
Both equalities follow if we apply $\pr_i$ to diagram~(\ref{eq:10}).
As $\Theta_\emptyset=\id$, we can use this diagram to compute $\Theta_s$ inductively.
This computation and Theorem~\ref{lemma:6} prove that $\Theta$ is a homomorphism 
of rings and $\R$-$\R$-bimodules.

Let us consider now the tensor product:
$$
\R^{\otimes s}=\R\otimes_{\R(s_1)}\cdots\otimes_{\R(s_n)}\R,
$$
where each $\R(s_i)$ 
acts on the left and on the right on $\R$
through $\pi_{s_i}$, that is, $bm=\pi_{s_i}^*(b)\cup m$ and $mb=m\cup \pi_{s_i}^*(b)$.
for any $m\in \R$ and $b\in \R(s_i)$. 
In view of Lemma~\ref{lemma:8}, we get
$$
\R^{\otimes s}=\R\otimes_{\R^{s_1}}\cdots\otimes_{\R^{s_n}}\R,
$$
We will use the first equality, when we apply the (iso)morphisms~$\theta$ introduced in Section~\ref{The_tensor_product}
and the second equality to perform algebraic computations.

\begin{theorem}\label{theorem:3}
The map $\Theta_s$ factors through the natural projection
$\R^{\otimes_\k n+1}\to \R^{\otimes s}$
to an isomorphism of rings and $\R$-$\R$-bimodules
$\theta_s:\R^{\otimes s}\ito H(s)$.
\end{theorem}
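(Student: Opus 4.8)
The plan is to argue by induction on $n = |s|$, exploiting the inductive description of $\Theta_s$ furnished by the commutative diagram~(\ref{eq:11}). The base case $n = 0$ is immediate: $\BS(\emptyset) = \pt$, $\Theta_\emptyset = \id$, and $\R^{\otimes\emptyset} = \R$, so $\theta_\emptyset = \id\colon\R\ito H(\emptyset) = \R$ is trivially an isomorphism of rings and of $\R$-$\R$-bimodules. (The case $n = 1$, where $\BS((s_1)) = G_{s_1}/K$, amounts to the $K$-equivariant cohomology of a rank-one flag variety and may alternatively be read off from Section~\ref{Equivariant_cohomology_of_the_flag_variety} applied to $G_{s_1}$, whose Weyl group $\{1,s_1\}$ has order $2$ invertible in $\k$; otherwise it is subsumed in the general step below once the convention for the empty left factor is fixed.)

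For the inductive step, assume $n > 0$ and that $\Theta_{s'}$ factors through the natural projection $\R^{\otimes_\k n}\to\R^{\otimes s'}$ to an isomorphism $\theta_{s'}\colon\R^{\otimes s'}\ito H(s')$ of rings and of $\R$-$\R$-bimodules. The first task is to check hypotheses~\ref{restr:-1}--\ref{restr:6} of Theorem~\ref{theorem:1} for the data appearing in~(\ref{eq:10}), namely
$$
X = G_{s_1}\mathop{\times}\limits_K\cdots\mathop{\times}\limits_K G_{s_{n-1}},\quad Y = \pt,\quad P = G_{s_n},\quad L = R = Q = M = K,
$$
with $\alpha([g_1:\cdots:g_{n-1}]) = g_1\cdots g_{n-1}$ and $B\colon\pt\to G/K$ the constant map at the base point $K$. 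Conditions~\ref{restr:-1} and~\ref{restr:0} are standing assumptions; \ref{restr:1} holds since $K$ is a torus and $G_{s_n}$ is generated by the connected subgroups $\phi_\alpha(\SU_2)$ and $K$; \ref{restr:2} holds because $H_K^\bullet(\pt,\k) = \R$ is a polynomial ring concentrated in even degrees and $H^\bullet_{G_{s_n}}(\pt,\k)\cong\R^{s_n}$ is free of finite rank and even, being a polynomial subring of $\R$ as $2$ is invertible (Lemma~\ref{lemma:8}); \ref{restr:3} holds because $X$ is the quotient of a compact Hausdorff space by a free compact-group action; and \ref{restr:4}, \ref{restr:5}, \ref{restr:6} for $X/R = \BS(s')$ --- together with $P/Q = G_{s_n}/K\cong S^2$ --- are the standard facts that Bott--Samelson varieties have free cohomology of finite rank concentrated in even degrees, are simply connected, and carry principal $K$-bundles $G_{s_1}\mathop{\times}\limits_K\cdots\mathop{\times}\limits_K G_{s_{n-1}}\to\BS(s')$ over them; these follow by a parallel induction, since $\BS(t)$ fibres over $G_{t_1}/K\cong S^2$ with fibre $\BS((t_2,\dots,t_{|t|}))$, using that each $G_{t_i}\to G_{t_i}/K$ is a principal $K$-bundle together with Lemma~\ref{lemma:pb:1} and Proposition~\ref{proposition:pb:1}. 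Granting this, Theorem~\ref{theorem:1} gives that
$$
\theta\colon H(s')\otimes_{\R(s_n)}\R\ito H^\bullet_K\(G_{s_1}\mathop{\times}\limits_K\cdots\mathop{\times}\limits_K G_{s_n}\mathop{\times}\limits_K\pt,\k\) = H(s)
$$
is an isomorphism of rings and of left $\R = H_K^\bullet(\pt,\k)$-modules, and --- since $B$ is constant at $K$, so that the $B$-twisted right action on $H^\bullet_Q(Y,\k) = \R$ is the canonical one (Remark~\ref{remark:2}), while $\alpha B = A_s$ (both send $[g_1:\cdots:g_n\rr$ to $g_1\cdots g_n K$) --- also of right $\R$-modules, the right action on $H(s)$ being the one twisted by $A_s$ that defines the $\R$-$\R$-bimodule $H(s)$.

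It remains to assemble the pieces. By the inductive hypothesis $\Theta_{s'}\otimes\id$ factors through $\R^{\otimes s'}\otimes_\k\R$, and since $\theta_{s'}$ is a morphism of right $\R$-modules --- hence of right $\R(s_n)$-modules through $\pi_{s_n}^*$ --- it descends further to an isomorphism $\theta_{s'}\otimes_{\R(s_n)}\id\colon\R^{\otimes s'}\otimes_{\R(s_n)}\R = \R^{\otimes s}\ito H(s')\otimes_{\R(s_n)}\R$. Combining this with Lemma~\ref{lemma:5} (which says $\Theta$ already factors through $H(s')\otimes_{\R(s_n)}\R$) and the commutativity of~(\ref{eq:11}), we conclude that $\Theta_s$ factors through the natural projection $\R^{\otimes_\k n+1}\to\R^{\otimes s}$ to the composite $\theta_s = \theta\circ\big(\theta_{s'}\otimes_{\R(s_n)}\id\big)\colon\R^{\otimes s}\ito H(s)$, which, as a composite of two isomorphisms of rings and of $\R$-$\R$-bimodules, is again one; here the ring structure on $\R^{\otimes s}$ obtained by iterating the graded tensor product coincides with that of $\R^{\otimes s'}\otimes_{\R(s_n)}\R$ by associativity, so that $\theta_{s'}\otimes_{\R(s_n)}\id$ is indeed a ring homomorphism.

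I expect the main obstacle to be bookkeeping rather than any single hard idea: on one side, nailing down hypotheses~\ref{restr:4}--\ref{restr:6} for $\BS(s')$, which requires the parallel induction establishing that Bott--Samelson varieties have free even cohomology, are simply connected, and serve as bases of principal $K$-bundles; on the other side, keeping track of all the left and right module structures --- so that the twisting map $\alpha B$ matches $A_s$ exactly and the induced right $\R(s_n)$-action on $H(s')$ and left $\R(s_n)$-action on $\R$ are precisely those occurring in the definition of $\R^{\otimes s}$ --- where a direction-of-action slip would be easy to make.
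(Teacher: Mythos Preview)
Your proof is correct and follows essentially the same approach as the paper's: induction on $|s|$ with base case $\Theta_\emptyset=\id$, then for the inductive step invoking Theorem~\ref{theorem:1} for the data in~(\ref{eq:10}) to obtain the isomorphism $\theta$, and finally assembling $\theta_s=\theta\circ(\theta_{s'}\otimes_{\R(s_n)}\id)$ via the commutativity of~(\ref{eq:11}). You are considerably more explicit than the paper in verifying hypotheses~\ref{restr:-1}--\ref{restr:6} and in tracking the right $\R$-module structures (the paper only singles out~\ref{restr:6} via Lemma~\ref{lemma:pb:1}), but the architecture of the argument is the same.
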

\begin{proof}
Let us apply the induction on the length of $s$.
There is nothing to prove if $s=\emptyset$,
as $\Theta_\emptyset=\id$. Now suppose that $s$ is not empty.
We consider the following diagram:
$$
\def\sp{50pt}
\begin{tikzcd}[column sep=-20pt]
H(s')\otimes_\k \R\arrow{dr}\arrow{rr}{\Theta}&[\sp]&H^\bullet_K\(G_{s_1}\mathop{\times}\limits_K\cdots\mathop{\times}\limits_KG_{s_n}\mathop{\times}\limits_K\pt,\k\)\arrow[equal]{dd}\\
{}&[\sp]H(s')\otimes_{\R(s_n)}\R\arrow{ur}{\theta}\\
\R^{\otimes s'}\otimes_\k \R\arrow{uu}{\theta_{s'}\otimes\id}\arrow{r}&[\sp]\R^{\otimes s}\arrow{u}{\theta_{s'}\otimes\id}\arrow[dashed]{r}{\theta_s}&H(s)\\
{}&[\sp]\R^{\otimes_\k n+1}\arrow{u}\arrow{ul}\arrow{ur}[swap]{\Theta_s}&\\
\end{tikzcd}
$$

\vspace{-20pt}

\noindent
where $\theta$ is the map corresponding to $\Theta$ from diagram~(\ref{eq:11}) as described in Section~\ref{The_tensor_product} and
the dashed arrow is defined so that the trapezoid containing it is commutative.
We need to prove that the triangle containing this arrow is also commutative.
This is however so, as all other triangles, the trapezoid as well as the outer perimeter
are commutative. The last claim follows from the commutativity of diagram~(\ref{eq:11}) and the inductive definition of $\theta_{s'}$.
Note that $\theta$ is an isomorphism by Theorem~\ref{theorem:1},
the validity of~\ref{restr:6} being guaranteed inductively by Lemma~\ref{lemma:pb:1}.
\end{proof}

\subsection{Concatenation}\label{Concatenation} Our next aim is to prove that
the isomorphisms $\theta_s$ of Theorem~\ref{theorem:3} behave well under the concatenation
of sequences. Let $s=(s_1,\ldots,n)$ and $t=(t_1,\ldots,t_m)$ be sequences of reflections.
As in Section~\ref{The_embedding}, we consider the embedding
$$
\phi_{s,t}:\BS(st)\,{}_K\!\!\times E\to(\BS(s)\,{}_K\!\!\times E)\times(\BS(t)\,{}_K\!\!\times E)
$$
for the following set of data:
$$
X=G_{s_1}\mathop{\times}\limits_K{G_{s_2}}\mathop{\times}\limits_K\cdots\mathop{\times}\limits_KG_{s_n},\quad Y=\BS(t),\quad L=R=P=Q=K,
$$
and $\alpha:X\to G$ defined by $[g_1:\cdots:g_n]\mapsto g_1\cdots g_n$.
Here we use the representation
$$
\BS(st)=X\mathop{\times}\limits_KK\mathop{\times}\limits_KY,
$$
thus assuming
\begin{equation}\label{eq:id:conc}
[g_1:\cdots:g_{n+m}\rr=[g_1:\cdots:g_n:1:g_{n+1}:\cdots:g_{n+m}\rr.
\end{equation}
Hence by Theorem~\ref{theorem:1}, we get the following isomorphisms of $\R$-$\R$-bimodules:
$$
\begin{tikzcd}
H(s)\otimes_{\R}H(t)\arrow{r}{\theta_{s,t}}[swap]{\sim}&H(st),
\end{tikzcd}
$$
where $\theta_{s,t}$ is induced by $\phi_{s,t}$ as in Section~\ref{The_tensor_product}.

\begin{theorem}\label{theorem:4}
There is the following commutative diagram:
$$
\begin{tikzcd}
H(s)\otimes_{\R}H(t)\arrow{r}{\theta_{s,t}}[swap]{\sim}&H(st)\\
\R^{\otimes s}\otimes_{\R}\R^{\otimes t}\arrow{u}{\theta_s\otimes\theta_t}\arrow{r}[swap]{\sim}&\R^{\otimes st}\arrow{u}{\theta_{st}}
\end{tikzcd}
$$
where the isomorphism of the bottom arrow is
given by
\begin{equation}\label{eq:13}
(a_1\otimes\cdots\otimes a_{n+1})\otimes(b_1\otimes\cdots\otimes b_{m+1})\mapsto a_1\otimes\cdots\otimes a_n \otimes a_{n+1}\cup b_1\otimes b_2\otimes\cdots\otimes b_{m+1}.
\end{equation}
\end{theorem}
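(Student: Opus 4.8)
The plan is to derive the commutativity of the square from a single identity between the four embeddings of Borel constructions, together with the naturality of the external cross product, and then to read off that $\theta_{st}$ is an isomorphism. First I would write out $\phi_s$, $\phi_t$, $\phi_{st}$ and $\phi_{s,t}$ explicitly from their definitions. Representing a point of $\BS(st)\,{}_K\!\times E$ as $K([g_1:\cdots:g_n:1:g_{n+1}:\cdots:g_{n+m}\rr,e)$ by~(\ref{eq:id:conc}) and using that $\alpha([g_1:\cdots:g_n])=g_1\cdots g_n$, formula~(\ref{eq:5}) gives
$$
\phi_{s,t}\bigl(K([g_1:\cdots:g_{n+m}\rr,e)\bigr)=\bigl(K([g_1:\cdots:g_n\rr,e),\,K([g_{n+1}:\cdots:g_{n+m}\rr,(g_1\cdots g_n)^{-1}e)\bigr).
$$
Applying $\phi_s\times\phi_t$ and comparing coordinate by coordinate with the formula for $\phi_{st}$, one obtains the key relation
$$
\phi_{st}=\delta\circ(\phi_s\times\phi_t)\circ\phi_{s,t},
$$
where $\delta\colon(K\lq E)^{n+m+2}\to(K\lq E)^{n+m+1}$ is the projection $\pr_{1,\ldots,n+1,n+3,\ldots,n+m+2}$ forgetting the $(n+2)$-nd coordinate, i.e. the first coordinate of the $\phi_t$-block; moreover one reads off that on the image of $(\phi_s\times\phi_t)\circ\phi_{s,t}$ the $(n+1)$-st and $(n+2)$-nd coordinates coincide, both being $K(g_1\cdots g_n)^{-1}e$. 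This coordinate bookkeeping is the most laborious part of the argument, although it uses nothing beyond the displayed formulas; I regard it as the main, if shallow, obstacle.

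Next I would pass to cohomology. Since $\R$ is free of finite rank in each degree, the cross products from $\R^{\otimes_\k n+1}$, $\R^{\otimes_\k m+1}$, $\R^{\otimes_\k n+m+1}$ and $\R^{\otimes_\k n+m+2}$ to the cohomology of the corresponding powers of $K\lq E$ are isomorphisms; recall that $\Theta_s$ is the cross product followed by $\phi_s^*$, and similarly for $\Theta_t$ and $\Theta_{st}$, whereas $\theta_{s,t}$ is induced by $m\otimes n\mapsto\phi_{s,t}^*(\pr_1^*(m)\cup\pr_2^*(n))$. As $\R$ vanishes in odd degrees, all Koszul signs below are $+1$, and the map~(\ref{eq:13}) is well defined on $\R^{\otimes s}\otimes_\R\R^{\otimes t}$ precisely because $\R$ is commutative. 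It suffices to check the square on the classes of the tensors $(a_0\otimes\cdots\otimes a_n)\otimes(b_0\otimes\cdots\otimes b_m)$, which span $\R^{\otimes s}\otimes_\R\R^{\otimes t}$ over $\k$. On such an element the lower-left composite unwinds, using that the external cross product $\boxtimes$ satisfies $\Theta_s(a_0\otimes\cdots\otimes a_n)\boxtimes\Theta_t(b_0\otimes\cdots\otimes b_m)=(\phi_s\times\phi_t)^*\bigl((a_0\times\cdots\times a_n)\boxtimes(b_0\times\cdots\times b_m)\bigr)$ together with the naturality of $\boxtimes$ under $\phi_{s,t}$, to
$$
\bigl((\phi_s\times\phi_t)\circ\phi_{s,t}\bigr)^*\bigl(a_0\times\cdots\times a_n\times b_0\times\cdots\times b_m\bigr).
$$

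Finally, the upper-right composite sends the same element, via~(\ref{eq:13}) and the definition of $\theta_{st}$, to $\phi_{st}^*\bigl(a_0\times\cdots\times(a_n\cup b_0)\times b_1\times\cdots\times b_m\bigr)$. Substituting $\phi_{st}=\delta\circ(\phi_s\times\phi_t)\circ\phi_{s,t}$ and using that $\delta^*$ of a cross product merely inserts a factor $1$ in the forgotten slot, this becomes $\bigl((\phi_s\times\phi_t)\circ\phi_{s,t}\bigr)^*$ applied to $a_0\times\cdots\times a_{n-1}\times(a_n\cup b_0)\times 1\times b_1\times\cdots\times b_m$; since the $(n+1)$-st and $(n+2)$-nd coordinates agree on the relevant image, $\pr_{n+1}^*(a_n\cup b_0)\cup\pr_{n+2}^*(1)$ pulls back along this map to the same class as $\pr_{n+1}^*(a_n)\cup\pr_{n+2}^*(b_0)$, so the composite equals $\bigl((\phi_s\times\phi_t)\circ\phi_{s,t}\bigr)^*\bigl(a_0\times\cdots\times a_n\times b_0\times\cdots\times b_m\bigr)$, which is precisely the lower-left composite. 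Thus the square commutes, and since every arrow in it other than $\theta_{st}$ is already known to be an isomorphism by Theorems~\ref{theorem:1} and~\ref{theorem:3}, so is $\theta_{st}$.
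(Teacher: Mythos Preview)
Your proof is correct and follows essentially the same approach as the paper. Both arguments unwind the two composites to pull-backs along $(\phi_s\times\phi_t)\circ\phi_{s,t}$ and along $\phi_{st}$, and then reduce the comparison to coordinate identities among the maps $\phi_s,\phi_t,\phi_{st},\phi_{s,t}$; the paper records these as the two families $\pr_i\phi_s\pr_1\phi_{s,t}=\pr_i\phi_{st}$ for $1\le i\le n+1$ and $\pr_j\phi_t\pr_2\phi_{s,t}=\pr_{j+n}\phi_{st}$ for $1\le j\le m+1$, whereas you package the same content as the single factorization $\phi_{st}=\delta\circ(\phi_s\times\phi_t)\circ\phi_{s,t}$ together with the observation that the $(n{+}1)$-st and $(n{+}2)$-nd coordinates agree on the image, which is exactly the overlap case $i=n+1$, $j=1$.
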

\begin{proof}
Following the upper path, we get
$$\hspace{-210pt}
\begin{tikzcd}
(a_1\otimes\cdots\otimes a_{n+1})\otimes(b_1\otimes\cdots\otimes b_{m+1})\arrow[mapsto]{r}{\theta_s\otimes\theta_t}&{}
\end{tikzcd}
$$

\vspace{-15pt}

$$
\begin{tikzcd}
\phi_s^*(\pr_1^*(a_1)\cup\cdots\cup\pr_{n+1}^*(a_{n+1}))\otimes\phi_t^*(\pr_1^*(b_1)\cup\cdots\cup\pr_{m+1}^*(b_{m+1}))\arrow[mapsto]{r}{\theta_{s,t}}&{}
\end{tikzcd}
$$
$$\hspace{50pt}
\phi_{s,t}^*\big(\pr_1^*\phi_s^*(\pr_1^*(a_1)\cup\cdots\cup\pr_{n+1}^*(a_{n+1}))\cup\pr_2^*\phi_t^*(\pr_1^*(b_1)\cup\cdots\cup\pr_{m+1}^*(b_{m+1}))\big).
$$
Following the lower path, we get
$$
\begin{tikzcd}
(a_1\otimes\cdots\otimes a_{n+1})\otimes(b_1\otimes\cdots\otimes b_{m+1})\arrow[mapsto]{r}&[-10pt]a_1\otimes\cdots\otimes a_n \otimes a_{n+1}\cup b_1\otimes b_2\otimes\cdots\otimes b_{m+1}
\end{tikzcd}
$$

\vspace{-18pt}

$$
\begin{tikzcd}
{}\arrow[mapsto]{r}&[-10pt]\phi_{st}^*\big(\pr_1^*(a_1)\cup\cdots\cup\pr_n^*(a_n) \cup\pr_{n+1}^*(a_{n+1}\cup b_1)\cup\pr_{n+2}^*(b_2)\cup\cdots\cup\pr_{n+m+1}^*(b_{m+1})\big).
\end{tikzcd}
$$

Comparing the right-hand sides, we see that we need to prove that
$$
\pr_i\phi_s\pr_1\phi_{s,t}=\pr_i\phi_{st}\text{ for }1\le i\le n+1,\quad
\pr_j\phi_t\pr_2\phi_{s,t}=\pr_{j+n}\phi_{st}\text{ for }1\le j\le m+1.
$$
Let us prove the first formula. Using identification~(\ref{eq:id:conc}), we get
$$\hspace{-50pt}
K\big([g_1:\cdots:g_n:g'_1:\cdots:g'_m\rr,e\big)=K([\underbrace{g_1:\cdots:g_n}_x:\underbrace{1}_p:\underbrace{g'_1:\cdots:g'_m}_y\rr,e)
$$

\vspace{-10pt}

$$\hspace{170pt}
\begin{tikzcd}
\arrow[mapsto]{r}{\pr_1\phi_{s,t}}&[10pt]K\big([g_1:\cdots:g_n\rr,e\big)\arrow[mapsto]{r}{\pr_i\phi_s}&K(g_1\cdots g_{i-1})^{-1}e.
\end{tikzcd}
$$
One can see that the right-hand side is exactly $\pr_i\phi_{s,t}$ applied to the left-hand side. As usually, we underbraced
the variables $x$, $y$, and $p$ as in~(\ref{eq:5}).

Let us prove the second formula. Using identification~(\ref{eq:id:conc}), we get
$$\hspace{-50pt}
K\big([g_1:\cdots:g_n:g'_1:\cdots:g'_m\rr,e\big)=K([\underbrace{g_1:\cdots:g_n}_x:\underbrace{1}_p:\underbrace{g'_1:\cdots:g'_m}_y\rr,e)
$$

\vspace{-10pt}

$$\hspace{20pt}
\begin{tikzcd}
\arrow[mapsto]{r}{\pr_2\phi_{s,t}}&[10pt]K\big([g'_1:\cdots:g'_m\rr,(g_1\cdots g_n)^{-1}e\big)\arrow[mapsto]{r}{\pr_j\phi_t}&K(g'_1\cdots g'_{j-1})^{-1}(g_1\cdots g_n)^{-1}e.
\end{tikzcd}
$$

\vspace{-9pt}

$$\hspace{280pt}
=K(g_1\cdots g_ng'_1\cdots g'_{j-1})^{-1}e.
$$
One can see that the right-hand side is exactly $\pr_{n+j}\phi_{s,t}$ applied to the left-hand side.
\end{proof}

\begin{remark}\label{remark:3}\rm
The maps $\phi_{s,t}$ were introduced for the universal principal bundle $E$. We will also use their
finite dimensional versions $\phi_{s,t}^N$ with $E$ replaced by $E^N$.
\end{remark}

\subsection{Fixed points}\label{Fixed_points} Let $t$ be a reflection. In that case, the lifting $\dot t$
(see Section~\ref{Semisimple_groups}) can be chosen within $G_t$. We will assume this choice
for the rest of the paper. Moreover, we will assume that the neutral element of $W$
is lifted to the neutral element of $G$.

Let $s=(s_1,\ldots,s_n)$ be a sequence of reflections. We denote by $\Gamma_s$
the set of {\it generalized combinatorial galleries} whose elements are sequences
$\gamma=(\gamma_1,\ldots,\gamma_n)$, where $\gamma_i=s_i$ or $\gamma_i=1$ for each $i$.
We will identify $\gamma$ with the point $[\dot\gamma_1:\cdots:\dot\gamma_{n-1}:\dot\gamma_n\rr$
of $\BS(s)$. This point does not depend
on the choice of liftings and is fixed by $K$. Moreover, any point of ${}^K\BS(s)$
is of this form. For each $x\in W$, we denote by $\Gamma_{s,x}$
the subset of $\Gamma_s$ consisting of $\gamma$ such that $\gamma_1\cdots\gamma_n=x$.

According to Lemma~\ref{lemma:pull-back}, the 
embedding
$i_\gamma:\pt\hookrightarrow\BS(s)$ taking value $\gamma\in\Gamma_{s,x}$
induces the homomorphism of
$\R$-$\R$-bimodules
$$
i_\gamma^\star:H(s)\to \R_x,
$$
Note that in this formula, the left actions are canonical and the right actions
on the domain and codomain are twisted by $A_s$ and $A_si_\gamma$, respectively (see, Lemma~\ref{lemma:pull-back}).
The lats map takes the only value $xK$. We can compute $i_\gamma^\star$ in coordinates.
\begin{theorem}\label{theorem:5} For each $\gamma\in\Gamma_{s,x}$, the composition
$$
\begin{tikzcd}
\R^{\otimes s}\arrow{r}{\theta_s}&H(s)\arrow{r}{i^\star_\gamma}&\R_x
\end{tikzcd}
$$
is given by
$
a_1\otimes\cdots\otimes a_{n+1}\mapsto a_1\cup\gamma_1(a_2)\cup\cdots\cup\gamma_1\cdots\gamma_n(a_{n+1})
$.
\end{theorem}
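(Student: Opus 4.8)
The plan is to compute the composition $i_\gamma^\star\circ\Theta_s$ on $\R^{\otimes_\k n+1}$ directly, and then descend to $\R^{\otimes s}$ by Theorem~\ref{theorem:3}. Recall that $i_\gamma^\star=(i_\gamma\,{}_K\!\times\,\id)^*$ and that $\Theta_s$ is the cross product $a_1\otimes\cdots\otimes a_{n+1}\mapsto\pr_1^*(a_1)\cup\cdots\cup\pr_{n+1}^*(a_{n+1})$ into $H^\bullet((K\lq E)^{n+1},\k)$ followed by $\phi_s^*$. Since $i_\gamma$ is $K$-equivariant --- its value $\gamma=[\dot\gamma_1:\cdots:\dot\gamma_n\rr$ is a $K$-fixed point of $\BS(s)$, see Section~\ref{Fixed_points} --- the map $i_\gamma\,{}_K\!\times\,\id\colon\pt\,{}_K\!\times E\to\BS(s)\,{}_K\!\times E$ is defined, and under the identification $\pt\,{}_K\!\times E\cong K\lq E$ the composition $\psi_\gamma:=\phi_s\circ(i_\gamma\,{}_K\!\times\,\id)\colon K\lq E\to(K\lq E)^{n+1}$ is, by the defining formula for $\phi_s$,
$$\psi_\gamma(Ke)=\bigl(Ke,\;K\dot\gamma_1^{-1}e,\;K(\dot\gamma_1\dot\gamma_2)^{-1}e,\;\ldots,\;K(\dot\gamma_1\cdots\dot\gamma_n)^{-1}e\bigr).$$
Setting $\delta_i=\gamma_1\cdots\gamma_{i-1}$ for $i=1,\dots,n+1$ (so $\delta_1=1$ and $\delta_{n+1}=x$), the $i$-th component $\pr_i\psi_\gamma$ is the map $Ke\mapsto K\dot\delta_i^{-1}e$; this is well defined because $\dot\delta_i\in N_G(K)$, and it is exactly the map $K\lq\rho_{\delta_i^{-1}}$ of Section~\ref{Semisimple_groups} (independently of the chosen lift).

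Next I would invoke~(\ref{eq:tw5:5}): on $\R=H^\bullet_K(\pt,\k)$ one has $(K\lq\rho_w)^*(\mu)=w^{-1}(\mu)$, so $(\pr_i\psi_\gamma)^*=(K\lq\rho_{\delta_i^{-1}})^*$ acts as $\delta_i$. Since pull-back is a ring homomorphism and $\psi_\gamma^*\pr_i^*=(\pr_i\psi_\gamma)^*$, applying $\psi_\gamma^*$ to the cross product yields
$$i_\gamma^\star\Theta_s(a_1\otimes\cdots\otimes a_{n+1})=\delta_1(a_1)\cup\delta_2(a_2)\cup\cdots\cup\delta_{n+1}(a_{n+1})=a_1\cup\gamma_1(a_2)\cup\cdots\cup\gamma_1\cdots\gamma_n(a_{n+1}).$$
By Theorem~\ref{theorem:3}, $\Theta_s=\theta_s\circ p$ where $p\colon\R^{\otimes_\k n+1}\to\R^{\otimes s}$ is the natural projection; hence $i_\gamma^\star\theta_s\circ p$ equals the displayed map, and, $p$ being surjective, $i_\gamma^\star\theta_s$ is given by the same formula on $\R^{\otimes s}$. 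That the right-hand side descends to $\R^{\otimes s}$ is then automatic; it can also be seen directly from the $s_i$-invariance of $\pi_{s_i}^*(\R(s_i))\subset\R$ (Lemma~\ref{lemma:8}) together with commutativity of $\R$. That $i_\gamma^\star\theta_s$ is moreover compatible with the bimodule structures, the left actions being canonical and the right actions twisted by $A_s$ and $A_si_\gamma$, is already part of Lemma~\ref{lemma:pull-back}, so only the formula needs proof.

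The only delicate point is bookkeeping the direction of the $W$-action: $\rho_w$ multiplies on the left by a lift of $w$, the $i$-th coordinate of $\phi_s$ multiplies by the inverse of a lift of $\gamma_1\cdots\gamma_{i-1}$, and the defining relation $(K\lq\rho_w)^*=w^{-1}$ then turns this inverse back into $\gamma_1\cdots\gamma_{i-1}$; lining up all three conventions is where care --- rather than difficulty --- is required, and one must also recall that $i_\gamma$ is $K$-equivariant so that $i_\gamma\,{}_K\!\times\,\id$ makes sense. An alternative, somewhat longer route would be induction on $|s|$ using diagram~(\ref{eq:11}), Lemma~\ref{lemma:pull-back} and Theorem~\ref{lemma:6} to relate $i_\gamma$ on $\BS(s)$ to $i_{\gamma'}$ on $\BS(s')$ with $\gamma'=(\gamma_1,\ldots,\gamma_{n-1})$; but the direct computation above is cleaner and I would use it.
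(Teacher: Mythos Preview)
Your proof is correct and follows essentially the same route as the paper: compute $i_\gamma^\star\Theta_s$ by identifying $\pr_i\phi_s(i_\gamma\,{}_K\!\times\,\id)$ with $K\lq\rho_{(\gamma_1\cdots\gamma_{i-1})^{-1}}$ and then invoke~(\ref{eq:tw5:5}). Your extra remarks on the descent to $\R^{\otimes s}$ via Theorem~\ref{theorem:3} and on the sign/direction bookkeeping are careful elaborations of steps the paper leaves implicit.
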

\begin{proof}
Replacing $\theta_s$ by $\Theta_s$, it remains to compute the following composition:
$$
\begin{tikzcd}
\R^{\otimes_\k n+1}\arrow{r}{\times}&H^\bullet((K\lq E)^{n+1},\k)\arrow{r}{\phi_s^*}&H(s)\arrow{r}{\iota_\gamma^\star}&\R_x.
\end{tikzcd}
$$
Following these maps, we get
\begin{equation}\label{eq:19}
\begin{array}{c}
\hspace{-160pt}\begin{tikzcd}
a_1\otimes\cdots\otimes a_{n+1}\arrow[mapsto]{r}{\times}&\pr_1^*(a_1)\cup\cdots\cup\pr_{n+1}^*(a_{n+1})
\end{tikzcd}\\[6pt]
\hspace{80pt}\begin{tikzcd}
\arrow[mapsto]{r}{\iota_\gamma^*\phi_s^*}&(\pr_1\phi_s(i_\gamma\times\id))^*(a_1)\cup\cdots\cup(\pr_{n+1}\phi_s(i_\gamma\times\id))^*(a_{n+1}).
\end{tikzcd}
\end{array}
\end{equation}
It is easy to note that $\pr_j\phi_s(i_\gamma\times\id)=K\lq\rho_{(\gamma_1\cdots\gamma_{j-1})^{-1}}$,
where the last map is defined in Section~\ref{Semisimple_groups}.
Therefore, by~(\ref{eq:tw5:5}), we get
$$
(\pr_i\phi_s(i_\gamma\times\id))^*(a_i)=(K\lq\rho_{(\gamma_1\cdots\gamma_{i-1})^{-1}})^*
=\gamma_1\cdots\gamma_{i-1}(a_i).
$$
The result follows from this formula and~(\ref{eq:19}).
\end{proof}
Using 
the Mayer-Vietoris isomorphism,
we obtain that the restriction to ${}^{K\!}\BS(s)$ is the following homomorphism of $\R$-$\R$-bimodules:
$$
H^\bullet_K(\BS(s),\k)\to H^\bullet_K({}^K\BS(s),\k)\ito\bigoplus_{x\in W}\bigoplus_{\gamma\in\Gamma_{s,x}}\R_x.
$$
We do not claim that this homomorphism
is a monomorphism. However, this is true under some restrictions on $\k$,
in which case we say that {\it the localization theorem holds}.

\section{Morphisms}\label{Morphisms}

\subsection{The setup}
In this section, we will construct morphisms between cohomologies of Bott-Samelson varieties $H(s)$.
These morphisms are either equivariant pull-backs
or push-forwards or compositions of both. To treat push-forwards correctly, we will assume that $2$
is invertible in $\k$. To the same end, we will only consider the varieties $\BS(s)$
for sequences of simple reflections $s$. In this case,
\begin{equation}\label{eq:BSalg}
\BS(s)\cong P_{s_1}\mathop{\times}\limits_B{P_{s_2}}\mathop{\times}\limits_B\cdots\mathop{\times}\limits_BP_{s_n}/B,
\end{equation}
where $B$ and $P_{s_i}$ are the Borel and the minimal parabolic groups of the complexification $G^c$ of $G$, respectively.
In this way, $\BS(s)$ receives a complex structure and thus an orientation.
We also get $G/K\cong G^c/B$ and thus the quotient $G/K$ also receives a complex structure.

In what follows, we will replace double brackets with simple ones. For example,
$$
\BS(s_1,\ldots,s_n)=\BS((s_1,\ldots,s_n)),\quad H(s_1,\ldots,s_n)=H((s_1,\ldots,s_n)).
$$
Moreover, we will replace any sequence of length 1 by its unique element. For example
$\phi_{t,s}=\phi_{t,(s)}$, $\theta_{t,s}=\theta_{t,(s)}$, $\R^{\otimes s}=\R^{\otimes(s)}$, etc.
We also will no longer write the symbol of the cup product between elements of $\R$.

We draw morphisms between bimodules $H(s)$ as usual from bottom to top. The easiest case is represented
by trivial morphisms,
which are drawn as vertical lines in the colors corresponding to the colors of the reflections:

\medskip

\begin{center}
\begin{tikzpicture}
\draw[dashed] (0,1.5) -- (3.5,1.5);
\draw[dashed] (0,0) -- (3.5,0);
\draw[red] (0.5,0)  node[anchor=north]{\footnotesize$s_1$};
\draw[blue] (1.25,0)  node[anchor=north]{\footnotesize$s_2$};
\draw[green] (3,0)  node[anchor=north]{\footnotesize$s_n$};
\draw (2.15,0) node[anchor=north]{\footnotesize$\cdots$};
\draw[red] (0.5,1.5)  node[anchor=south]{\footnotesize$s_1$};
\draw[blue] (1.25,1.5)  node[anchor=south]{\footnotesize$s_2$};
\draw[green] (3,1.5)  node[anchor=south]{\footnotesize$s_n$};
\draw (2.15,1.5) node[anchor=south]{\footnotesize$\cdots$};
\draw[red] (0.5,0) -- (0.5,1.5);
\draw[blue] (1.25,0) -- (1.25,1.5);
\draw[green] (3,0) -- (3,1.5);
\draw (2.15,0.7) node{\footnotesize$\cdots$};
\end{tikzpicture}
\end{center}


We denote by $\alpha_s$ the simple root 
such that $s=\omega_{\alpha_s}$, see Section~\ref{Semisimple_groups}, and set
$$
P_s(f)=\frac{f+s(f)}2,\qquad \partial_s(f)=\frac{f-s(f)}{\alpha_s}.
$$
The operator $\partial_s$ in the last formula is called the {\it Demazure operator}.
We get $P_s(f)\in\R^s$, $\partial_s(f)\in\R^s$ and
\begin{equation}\label{eq:32}
f=P_s(f)+\partial_s(f)\,\frac{\alpha_s}2.
\end{equation}

\subsection{One-color morphisms}\label{One-color morphisms} Let us fix a reflection $s$. Consider the natural embedding
$\iota_s:\pt\to\BS(s)$, which maps $pt$ to $[1\rr=1K$.
We get $\iota_s=i_{(1)}$ in the notation of Section~\ref{Fixed_points}.
We get the equivariant pull-back
$$
\iota_s^\star:H(s)\to \R,
$$
which we represent as the diagram
\begin{equation}\tag{$\iota_s^\star$}
\begin{tikzpicture}[baseline=19pt]
\draw[dashed] (0,0) -- (3,0);
\draw[dashed] (0,1.5) -- (3,1.5);
\draw[red] (1.5,0) node[anchor=north]{\footnotesize$s$} -- (1.5,0.75);
\draw[red,fill=red] (1.5,0.75) circle(0.06);
\end{tikzpicture}
\end{equation}
%
This map is a homomorphism of $\R$-$\R$-bimodules by Lemma~\ref{lemma:pull-back}.
By Theorem~\ref{theorem:5}, we get that the composition map
$$
\begin{tikzcd}
\R\otimes_{\R(s)}\R\arrow{r}{\theta_{s}}[swap]{\sim}& H(s)\arrow{r}{\iota_s^\star}& \R
\end{tikzcd}
$$
is given by
\begin{equation}\label{eq:31}
\iota_s^\star\theta_s(a\otimes b)=ab.
\end{equation}

Now we consider the negated equivariant push-forward
$$
-\iota_{s\star}:\R\to H(s)(2).
$$
Here and in what follows a number in round brackets means the degree shift.
We will represent the above morphism as follows:
\vspace{-5pt}
\begin{equation}\label{eq:28}\tag{$-\iota_{s\star}$}
\begin{tikzpicture}[baseline=19pt]
\draw[dashed] (0,0) -- (3,0);
\draw[dashed] (0,1.5) -- (3,1.5);
\draw[red] (1.5,1.5) node[anchor=south]{\footnotesize$s$} -- (1.5,0.75);
\draw[red,fill=red] (1.5,0.75) circle(0.06);
\end{tikzpicture}
\end{equation}
\vspace{3pt}

\noindent

By Lemma~\ref{lemma:push-forward}, this map is a homomorphism of
$\R$-$\R$-bimodules.
We want to compute the composition:
\begin{equation}\label{eq:21}
\begin{tikzcd}
\R\arrow{r}{-i_{s\star}}&H(s)(2)\arrow{r}{\theta_s^{-1}}[swap]{\sim}&\R\otimes_{\R(s)}\R(2).
\end{tikzcd}
\end{equation}
It suffices to compute the image of $1$. As the element $-\theta_s^{-1}i_{s\star}(1)$
is central and of degree $2$, the standard argument shows that
$$
-\theta_s^{-1}i_{s\star}(1)=c\alpha_s\otimes 1+c\otimes\alpha_s
$$
for some $c\in\k$. In order to compute this constant, let
apply first $-\theta_s$ and then $\iota_s^\star$ to the formula above.
By Theorem~\ref{theorem:5} and~(\ref{eq:31}), we get
$$
-\alpha_s=\Eu_K(1K,G_s/K)=\iota_s^\star\iota_{s\star}(1)=-\iota_s^\star\theta_s(c\alpha_s\otimes 1+c\otimes\alpha_s)=-2c\alpha_s,
$$
where the second (from the left) term denotes the equivariant Euler class of the normal bundle of $1K$ as a submanifold of $G_s/K$.
Hence $c=1/2$. As all maps in~(\ref{eq:21}) are homomorphisms of bimodules, we get 
$$
\theta_s^{-1}(-i_{s\star})(d)=d\frac{\alpha_s}2\otimes 1+d\otimes\frac{\alpha_s}2.
$$

Let $\mu_s:\BS(s,s)\to\BS(s)$ be the map given by $\mu_s([g_1:g_2\rr)=[g_1g_2\rr$.
Taking the pull-back, we get the map
$$
\mu_s^\star:H(s)\to H(s,s).
$$
We will draw this diagram as follows:

\vspace{-5pt}
\begin{equation}\tag{$\mu_s^\star$}
\begin{tikzpicture}[baseline=19pt]
\draw[dashed] (0,0) -- (3,0);
\draw[dashed] (0,1.5) -- (3,1.5);
\draw[red] (1.5,0) node[anchor=north]{\footnotesize$s$} -- (1.5,0.75);
\draw[red] (1.5,0.75) -- (0.75,1.5) node[anchor=south]{\footnotesize$s$};
\draw[red] (1.5,0.75) -- (2.25,1.5) node[anchor=south]{\footnotesize$s$};
\end{tikzpicture}
\end{equation}

This map is a homomorphism of $\R$-$\R$-bimodules by
Lemma~\ref{lemma:pull-back}. We are going to compute the composition
\begin{equation}\label{eq:22}
\begin{tikzcd}
\R\otimes_{\R(s)}\R\arrow{r}{\theta_s}&
H(s)\arrow{r}{\mu^*}&H(s,s)\arrow{r}{\theta_{(s,s)}^{-1}}&\R\otimes_{\R(s)}\R\otimes_{\R(s)}\R.
\end{tikzcd}
\!\!\!\!
\end{equation}
Computing in degree zero, we get
$$
\mu^\star\theta_s(1\otimes 1)=1=\theta_{(s,s)}(1\otimes 1\otimes 1),
$$
where the central unit is the constant unit function on $\BS(s,s){}_K\times E$.
As all maps in~(\ref{eq:22}) are homomorphisms of bimodules, we get
$$
\theta_{(s,s)}^{(-1)}\mu^\star\theta_s(a\otimes b)=a\otimes1\otimes b.
$$

As expected, the merge diagram
\vspace{-5pt}
\begin{equation}\label{eq:27:m}\tag{$-\mu_{s\star}$}
\begin{tikzpicture}[baseline=19pt]
\draw[dashed] (0,0) -- (3,0);
\draw[dashed] (0,1.5) -- (3,1.5);
\draw[red] (1.5,1.5) node[anchor=south]{\footnotesize$s$} -- (1.5,0.75);
\draw[red] (1.5,0.75) -- (0.75,0) node[anchor=north]{\footnotesize$s$};
\draw[red] (1.5,0.75) -- (2.25,0) node[anchor=north]{\footnotesize$s$};
\end{tikzpicture}
\end{equation}
\vspace{1pt}

\noindent
is given by the negated push-forward map $-\mu_{s\star}:H(s,s)\to H(s)(-2)$. We postpone the computation
of its coordinate form until we manage to extend diagram~(\ref{eq:28}), as well as all other diagrams of this section,
horizontally.

\subsection{Horizontal extensions}\label{Horizontal_extensions} We are going to apply the concatenation properties proved in Section~\ref{Concatenation}.
First, let us consider the planar diagram

\vspace{-5pt}
\begin{equation}\label{eq:27:l}\tag{$\iota_{t,s}^\star$}
\begin{tikzpicture}[baseline=19pt]
\draw[dashed] (-0.5,0) -- (3.5,0);
\draw[dashed] (-0.5,1.5) -- (3.5,1.5);
\draw[red] (3,0) node[anchor=north]{\footnotesize$s$}-- (3,0.75);
\draw[red,fill=red] (3,0.75) circle(0.06);
\draw (2.25,0) node[anchor=north]{\footnotesize$t_n$} -- (2.25,1.5) node[anchor=south]{\footnotesize$t_n$};
\draw (0,0) node[anchor=north]{\footnotesize$t_1$} -- (0,1.5) node[anchor=south]{\footnotesize$t_1$};
\draw (0.75,0) node[anchor=north]{\footnotesize$t_2$} -- (0.75,1.5) node[anchor=south]{\footnotesize$t_2$};
\draw  (1.5,0.7) node{\footnotesize$\cdots$};
\end{tikzpicture}
\end{equation}

\noindent
for a simple reflection $s$ and a sequence of simple reflections $t=(t_1,\ldots,t_n)$ (the black strings can be of any color).
Let $ts=(t_1,\ldots,t_n,s)$ be the concatenated sequence and
$\iota_{t,s}:\BS(t)\hookrightarrow\BS(ts)$ be the embedding given by
$\iota_{t,s}([g_1:\ldots:g_n\rr)=[g_1:\ldots:g_n:1\rr$. We claim that the diagram
$$
\begin{tikzcd}
H(t)\otimes_\R H(s)\arrow{r}{\id\otimes\iota_s^\star}\arrow{d}[swap]{\theta_{t,s}}{\wr}&H(t)\otimes_\R\R\arrow{d}{\theta_{t,\emptyset}}[swap]{\wr}\\
H(ts)\arrow{r}{\iota_{t,s}^\star}&H(t)
\end{tikzcd}
$$
is commutative. Indeed, for any $a\in H(t)$ and $b\in H(s)$, we get
$$
\theta_{t,\emptyset}(\id\otimes\iota_s^\star)(a\otimes b)=\theta_{t,\emptyset}(a\otimes\iota_s^\star(b))
=\phi_{t,\emptyset}^*(\pr_1^*(a)\cup\pr_2^*\iota_s^\star(b)).
$$
$$
\iota_{t,s}^\star\theta_{t,s}(a\otimes b)=\iota_{t,s}^\star\phi_{t,s}^*(\pr_1^*(a)\cup\pr_2^*(b)).
$$
Comaring the results, we see that it suffices to prove the equalities
\begin{equation}\label{eq:23}
\pr_1\phi_{t,\emptyset}=\pr_1\phi_{t,s}(\iota_{t,s}\ltimes{K}\id),\quad(\iota_s\ltimes{K}\id)\pr_2\phi_{t,\emptyset}=\pr_2\phi_{t,s}(\iota_{t,s}\ltimes{K}\id).
\end{equation}
They follow directly from identifiaction rule~(\ref{eq:id:conc}) for concatenation operators and definition~(\ref{eq:5}).

Now let us extend diagram~(\ref{eq:27:l}) to the right
\vspace{-5pt}
\begin{equation}\label{eq:27:lr}\tag{$\iota_{t,s,r}^\star$}
\begin{tikzpicture}[baseline=19pt]
\draw[dashed] (-0.5,0) -- (6.5,0);
\draw[dashed] (-0.5,1.5) -- (6.5,1.5);
\draw[red] (3,0) node[anchor=north]{\footnotesize$s$}-- (3,0.75);
\draw[red,fill=red] (3,0.75) circle(0.06);
\draw (2.25,0) node[anchor=north]{\footnotesize$t_n$} -- (2.25,1.5) node[anchor=south]{\footnotesize$t_n$};
\draw (0,0) node[anchor=north]{\footnotesize$t_1$} -- (0,1.5) node[anchor=south]{\footnotesize$t_1$};
\draw (0.75,0) node[anchor=north]{\footnotesize$t_2$} -- (0.75,1.5) node[anchor=south]{\footnotesize$t_2$};
\draw  (1.5,0.7) node{\footnotesize$\cdots$};
\draw (3.75,0) node[anchor=north]{\footnotesize$r_1$} -- (3.75,1.5) node[anchor=south]{\footnotesize$r_1$};
\draw (4.5,0) node[anchor=north]{\footnotesize$r_2$} -- (4.5,1.5) node[anchor=south]{\footnotesize$r_2$};
\draw  (5.25,0.7) node{\footnotesize$\cdots$};
\draw (6,0) node[anchor=north]{\footnotesize$r_m$} -- (6,1.5) node[anchor=south]{\footnotesize$r_m$};
\end{tikzpicture}
\end{equation}

\noindent
for another sequence of reflections $r=(r_1,\ldots,r_m)$. Let $tsr=(t_1,\ldots,t_n,s,r_1,\ldots,r_m)$ be the concatenated
sequence and $\iota_{t,s,r}:\BS(tr)\hookrightarrow\BS(tsr)$ be the map defined by
$\iota_{t,s,r}([g_1:\ldots:g_n:g'_1:\cdots:g'_m\rr)=[g_1:\ldots:g_n:1:g'_1:\cdots:g'_m\rr$.
We get the diagram
$$
\begin{tikzcd}
H(ts)\otimes_\R H(r)\arrow{r}{\iota_{t,s}^\star\otimes\id}\arrow{d}[swap]{\theta_{ts,r}}{\wr}&H(t)\otimes_\R H(r)\arrow{d}{\theta_{t,r}}[swap]{\wr}\\
H(tsr)\arrow{r}{\iota_{t,s,r}^\star}&H(tr)
\end{tikzcd}
$$
The proof of its commutativity is similar to the similar proof for the previos diagram.

A routine (but tedious) check proves the commutativity\footnote{which boils down to checking the corresponding equalities
involving the coordinatization maps $\phi_s,\phi_t,\ldots$ introduced in Section~\ref{Computation_of_the_equivariant_cohomology},
the concatenation maps $\phi_{t,s},\phi_{ts,r},\ldots$ introduced in Section~\ref{Concatenation}, and
various projections to factors and applying the appropriate identifications described in Sections~\ref{Quotient_products} and Section~\ref{More quotient products}.} of the following there-dimensional diagram:
$$
\begin{tikzcd}[column sep=-8pt]
&\R^{\otimes t}\otimes_{\R}\R^{\otimes s}\otimes_{\R}\R^{\otimes r}\arrow{ld}[swap]{\theta_t\otimes\theta_s\otimes\theta_t}\arrow{rr}{\id\otimes\iota_s^\star\theta_s\otimes\id}\arrow{dd}&&\R^{\otimes t}\otimes_{\R}\R\otimes_{\R}\R^{\otimes r}\arrow{dl}[swap]{\theta_t\otimes\id\otimes\theta_r}\arrow{dd}\\
H(t)\otimes_\R H(s)\otimes_\R H(r)\arrow[crossing over]{rr}[near end]{\id\otimes\iota_s^\star\otimes\id}\arrow{dd}[swap]{\theta_{t,s}\otimes\id}&&H(t)\otimes_\R \R\otimes_\R H(r)\\
&\R^{\otimes ts}\otimes_{\R}\R^{\otimes r}\arrow{rr}\arrow{dl}[swap]{\theta_{ts}\otimes\theta_t}\arrow{dd}&&\R^{\otimes t}\otimes_{\R}\R^{\otimes r}\arrow{dl}[swap]{\theta_t\otimes\theta_r}\arrow{dd}\\
H(ts)\otimes_\R H(r)\arrow[crossing over]{rr}[near end]{\iota_{t,s}^\star\otimes\id}\arrow[swap]{dd}{\theta_{ts,r}}&&H(t)\otimes_\R H(r)\arrow[from=uu,crossing over]{}[near start]{\theta_{t,\emptyset}\otimes\id}\\
&\R^{\otimes tsr}\arrow{rr}\arrow{dl}[swap]{\theta_{tsr}}&&\R^{\otimes tr}\arrow{dl}[swap]{\theta_{tr}}\\
H(tsr)\arrow[crossing over]{rr}[swap]{\iota_{t,s,r}^\star}&&H(tr)\arrow[from=uu,crossing over]{}[near start]{\theta_{t,r}}
\end{tikzcd}
$$
Hence
$$
\theta_{tr}^{-1}\iota_{t,s,r}^\star\theta_{tsr}(a_1\otimes\cdots\otimes a_{n+1}\otimes b_1\otimes\cdots\otimes b_{m+1})=a_1\otimes\cdots\otimes a_n\otimes a_{n+1}b_1\otimes b_2\otimes\cdots\otimes b_{m+1}.
$$

Let us extend the diagram~(\ref{eq:28}) horizontally. To this end, we first prove that the diagram
\begin{equation}\label{eq:26}
\begin{tikzcd}[column sep=35pt]
H(t)\otimes_\R\R\arrow{r}{\id\otimes\iota_{s\star}}\arrow{d}[swap]{\theta_{t,\emptyset}}&H(t)\otimes_\R H(s)(2)\arrow{d}{\theta_{t,s}}\\
H(t)\arrow{r}{\iota_{t,s\star}}&H(ts)(2)
\end{tikzcd}
\end{equation}
is commutative. It is more difficult, as it involves push-forward operators.
For any $a\in H(t)$ and $b\in\R$, we get by the first equality of~(\ref{eq:23}) and the projection formula
\begin{equation}\label{eq:25}
\begin{array}{c}
\!\!\!\!\!\!\!\!\!\!\!
\iota_{t,s\star}\theta_{t,\emptyset}(a\otimes b)=\iota_{t,s\star}\phi_{t,\emptyset}^*(\pr_1^*(a)\cup\pr_2^*(b))
=\iota_{t,s\star}((\pr_1\phi_{t,\emptyset})^*(a)\cup (\pr_2\phi_{t,\emptyset})^*(b))\\[8pt]
\;\;\;\;\;\;\;\;\;\;\;\;=\iota_{t,s\star}\big(\iota_{t,s}^\star(\pr_1\phi_{t,s})^*(a)\cup (\pr_2\phi_{t,\emptyset})^*(b)\big)
=(\pr_1\phi_{t,s})^*(a)\cup\iota_{t,s\star}(\pr_2\phi_{t,\emptyset})^*(b).
\end{array}
\end{equation}
Let us consider 
the compatibly oriented Cartesian square
$$
\begin{tikzcd}[column sep=42pt]
\BS(t)\ltimes{K}E^N\arrow{r}{\pr_2\phi_{t,\emptyset}^N}\arrow{d}[swap]{\iota_{t,s}\ltimes{K}\id}&K\lq E^N\arrow{d}{\iota_s\ltimes{K}\id}\\
\BS(ts)\ltimes{K}E^N\arrow{r}{\pr_2\phi_{t,s}^N}&\BS(s)\ltimes{K}E^N
\end{tikzcd}
$$
By~(\ref{eq:24}), we get $(\iota_{t,s}\ltimes{K}\id)_*(\pr_2\phi_{t,\emptyset}^N)^*=(\pr_2\phi_{t,s}^N)^*(\iota_s\ltimes{K}\id)_*$. Taking the limit $N\to\infty$,
we get $\iota_{t,s\star}(\pr_2\phi_{t,\emptyset})^*=(\pr_2\phi_{t,s})^*\iota_{s\star}$. Applying this substitution to
the right-hand side of~(\ref{eq:25}), we get
\begin{multline*}
\iota_{t,s\star}\theta_{t,\emptyset}(a\otimes b)=(\pr_1\phi_{t,s})^*(a)\cup(\pr_2\phi_{t,s})^*\iota_{s\star}(b)\\[3pt]
=\phi_{t,s}^*(\pr_1^*(a)\cup\pr_2^*\iota_{s\star}(b))=\theta_{t,s}(a\otimes \iota_{s\star}(b))=\theta_{t,s}(\id\otimes\iota_{s\star})(a\otimes b).
\end{multline*}
This equality proves the commutativity of diagram~(\ref{eq:26}). We prove similarly that the diagram
$$
\begin{tikzcd}[column sep=35pt]
H(t)\otimes_\R H(r)\arrow{r}{\id\otimes\iota_{s\star}}\arrow{d}[swap]{\theta_{t,r}}&H(ts)\otimes_\R H(r)(2)\arrow{d}{\theta_{ts,r}}\\
H(tr)\arrow{r}{\iota_{t,s,r\star}}&H(tsr)(2)
\end{tikzcd}
$$
is commutative. Computing in coordiates as in the three-dimensional diagram above, we get
\begin{equation}\label{eq:29}
\begin{array}{c}\hspace{-160pt}
\theta_{tsr}^{-1}(-\iota_{t,s,r\star})\theta_{tr}(a_1\otimes\cdots\otimes a_n\otimes d\otimes b_2\otimes\cdots\otimes b_{m+1})\\[6pt]
\hspace{130pt}\displaystyle=a_1\otimes\cdots\otimes a_n\otimes\( d\frac{\alpha_s}2\otimes 1+d\otimes\frac{\alpha_s}2\)\otimes b_2\otimes\cdots\otimes b_{m+1}.
\end{array}
\end{equation}
This morphism is represented by the diagram
\begin{equation}\label{eq:27}\tag{$-\iota_{t,s,r\star}$}
\begin{tikzpicture}[baseline=19pt]
\draw[dashed] (-0.5,0) -- (6.5,0);
\draw[dashed] (-0.5,1.5) -- (6.5,1.5);
\draw[red] (3,1.5) node[anchor=south]{\footnotesize$s$}-- (3,0.75);
\draw[red,fill=red] (3,0.75) circle(0.06);
\draw (2.25,0) node[anchor=north]{\footnotesize$t_n$} -- (2.25,1.5) node[anchor=south]{\footnotesize$t_n$};
\draw (0,0) node[anchor=north]{\footnotesize$t_1$} -- (0,1.5) node[anchor=south]{\footnotesize$t_1$};
\draw (0.75,0) node[anchor=north]{\footnotesize$t_2$} -- (0.75,1.5) node[anchor=south]{\footnotesize$t_2$};
\draw  (1.5,0.7) node{\footnotesize$\cdots$};
\draw (3.75,0) node[anchor=north]{\footnotesize$r_1$} -- (3.75,1.5) node[anchor=south]{\footnotesize$r_1$};
\draw (4.5,0) node[anchor=north]{\footnotesize$r_2$} -- (4.5,1.5) node[anchor=south]{\footnotesize$r_2$};
\draw  (5.25,0.7) node{\footnotesize$\cdots$};
\draw (6,0) node[anchor=north]{\footnotesize$r_m$} -- (6,1.5) node[anchor=south]{\footnotesize$r_m$};
\end{tikzpicture}
\end{equation}

The split map $\mu_s^\star$ can be extended horizontally in the same way. Consider the diagram
\begin{equation}\tag{$\mu_{t,s,r}^\star$}
\begin{tikzpicture}[baseline=19pt]
\draw[dashed] (-0.5,0) -- (8,0);
\draw[dashed] (-0.5,1.5) -- (8,1.5);
\draw[red] (3,1.5) node[anchor=south]{\footnotesize$s$}-- (3.75,0.75);
\draw[red] (4.5,1.5) node[anchor=south]{\footnotesize$s$}-- (3.75,0.75);
\draw[red] (3.75,0.75)-- (3.75,0)node[anchor=north]{\footnotesize$s$};
\draw (2.25,0) node[anchor=north]{\footnotesize$t_n$} -- (2.25,1.5) node[anchor=south]{\footnotesize$t_n$};
\draw (0,0) node[anchor=north]{\footnotesize$t_1$} -- (0,1.5) node[anchor=south]{\footnotesize$t_1$};
\draw (0.75,0) node[anchor=north]{\footnotesize$t_2$} -- (0.75,1.5) node[anchor=south]{\footnotesize$t_2$};
\draw  (1.5,0.7) node{\footnotesize$\cdots$};
\draw (5.25,0) node[anchor=north]{\footnotesize$r_1$} -- (5.25,1.5) node[anchor=south]{\footnotesize$r_1$};
\draw (6,0) node[anchor=north]{\footnotesize$r_2$} -- (6,1.5) node[anchor=south]{\footnotesize$r_2$};
\draw  (6.75,0.7) node{\footnotesize$\cdots$};
\draw (7.5,0) node[anchor=north]{\footnotesize$r_m$} -- (7.5,1.5) node[anchor=south]{\footnotesize$r_m$};
\end{tikzpicture}
\end{equation}
for sequences of simple reflections $t=(t_1,\ldots,t_n)$ and $r=(r_1,\ldots,r_m)$. Let $\mu_{t,s,r}:\BS(tssr)\to\BS(tsr)$
be the map\footnote{here $tssr=(t_1,\ldots,t_n,s,s,r_1,\ldots,r_m)$.} given by $\mu_{t,s,r}([g_1:\cdots:g_n:g'_1:g_2':g''_1:\cdots:g''_m\rr)=[g_1:\cdots:g_n:g'_1g_2':g''_1:\cdots:g''_m\rr$.
Arguing as for $\iota_{t,s,r}^\star$, we get
$$
\theta_{tssr}^{(-1)}\mu_{t,s,r}^\star\theta_{tsr}(a_1\otimes\cdots\otimes a_{n+1}\otimes b_1\otimes\cdots\otimes b_{m+1})
=a_1\otimes\cdots\otimes a_{n+1}\otimes1\otimes b_1\otimes\cdots\otimes b_{m+1}.
$$

At this point, we can compute the coordinate form of merge diagram~(\ref{eq:27:m}).
We clearly have $\mu_s\iota_{s,s,\emptyset}=\id$. Taking the push-forwards, we get $(-\mu_{s\star})(-\iota_{s,s,\emptyset\star})=\id$.
Expressed diagrammatically, we get our first relation:
\begin{center}
\begin{tikzpicture}[baseline=25pt]
\draw[dashed] (0,0) -- (3,0);
\draw[dashed] (0,2) -- (3,2);
\draw[red] (1.5,2) node[color=black,anchor=south]{\footnotesize$s$} -- (1.5,1.5);
\draw[red] (1.5,1.5) -- (1,1);
\draw[red] (1.5,1.5) -- (2,1);
\draw[red] (1,1) -- (1,0) node[color=black,anchor=north]{\footnotesize$s$};
\draw[red] (2,1) -- (2,0.5);
\draw[red,fill=red] (2,0.5) circle(0.06);
\end{tikzpicture}
\qquad
=
\qquad
\begin{tikzpicture}[baseline=25pt]
\draw[dashed] (0,0) -- (3,0);
\draw[dashed] (0,2) -- (3,2);
\draw[red] (1.5,2) node[color=black,anchor=south]{\footnotesize$s$} -- (1.5,0) node[color=black,anchor=north]{\footnotesize$s$};
\end{tikzpicture}
\end{center}
\noindent
Note that we have proved this relation without resorting to coordinatization. By~(\ref{eq:29}), we get
\begin{multline*}
1\otimes1=\theta_s^{-1}\mu_{s\star}\iota_{s,s,\emptyset\star}\theta_s(1\otimes 1)=\theta_s^{-1}(-\mu_{s\star})\theta_{(s,s)}\(1\otimes\frac{\alpha_s}2\otimes 1+1\otimes 1\otimes\frac{\alpha_s}2\)\\[8pt]
=\theta_s^{-1}(-\mu_{s\star})\theta_{(s,s)}\(1\otimes\frac{\alpha_s}2\otimes 1\)-\theta_s^{-1}\mu_{s\star}\theta_{(s,s)}\(1\otimes 1\otimes 1\)\frac{\alpha_s}2.
\end{multline*}
As $\theta_s^{-1}\mu_{s\star}\theta_{(s,s)}\(1\otimes 1\otimes 1\)=0$ for the degree reason, we get
$$
\theta_s^{-1}(-\mu_{s\star})\theta_{(s,s)}\(1\otimes\frac{\alpha_s}2\otimes 1\)=1\otimes1.
$$
Let us take any $b\in H(s)$. Applying~(\ref{eq:32}) and the above formula, we get
\begin{multline*}
\theta_s^{-1}(-\mu_{s\star})\theta_{(s,s)}(1\otimes b\otimes 1)=\\
-P_s(b)\,\theta_s^{-1}\mu_{s\star}\theta_{(s,s)}(1\otimes 1\otimes 1)
+\partial_s(b)\,\theta_s^{-1}(-\mu_{s\star})\theta_{(s,s)}\(1\otimes\frac{\alpha_s}2\otimes 1\)
=\partial_s(b)\otimes 1.
\end{multline*}
As this map is a homomorphism of bimodules, we get
$$
\theta_s^{-1}(-\mu_{s\star})\theta_{(s,s)}(a\otimes b\otimes c)=a\partial_s(b)\otimes c.
$$
The arguments described in this section, allow us to extend $-\mu_{s\star}$ to the morphism $-\mu_{t,s,r\star}$
represented by the diagram
\begin{equation}\tag{$-\mu_{t,s,r\star}$}
\begin{tikzpicture}[baseline=19pt]
\draw[dashed] (-0.5,0) -- (8,0);
\draw[dashed] (-0.5,1.5) -- (8,1.5);
\draw[red] (3,0) node[anchor=north]{\footnotesize$s$}-- (3.75,0.75);
\draw[red] (4.5,0) node[anchor=north]{\footnotesize$s$}-- (3.75,0.75);
\draw[red] (3.75,0.75)-- (3.75,1.5)node[anchor=south]{\footnotesize$s$};
\draw (2.25,0) node[anchor=north]{\footnotesize$t_n$} -- (2.25,1.5) node[anchor=south]{\footnotesize$t_n$};
\draw (0,0) node[anchor=north]{\footnotesize$t_1$} -- (0,1.5) node[anchor=south]{\footnotesize$t_1$};
\draw (0.75,0) node[anchor=north]{\footnotesize$t_2$} -- (0.75,1.5) node[anchor=south]{\footnotesize$t_2$};
\draw  (1.5,0.7) node{\footnotesize$\cdots$};
\draw (5.25,0) node[anchor=north]{\footnotesize$r_1$} -- (5.25,1.5) node[anchor=south]{\footnotesize$r_1$};
\draw (6,0) node[anchor=north]{\footnotesize$r_2$} -- (6,1.5) node[anchor=south]{\footnotesize$r_2$};
\draw  (6.75,0.7) node{\footnotesize$\cdots$};
\draw (7.5,0) node[anchor=north]{\footnotesize$r_m$} -- (7.5,1.5) node[anchor=south]{\footnotesize$r_m$};
\end{tikzpicture}
\end{equation}
and prove the coordinate formula
\begin{multline*}
\theta_{tsr}^{-1}(-\mu_{t,s,r\star})\theta_{tssr}(a_1\otimes\cdots\otimes a_n\otimes a\otimes b\otimes c\otimes b_2\otimes\cdots\otimes b_{m+1})\\[3pt]
=a_1\otimes\cdots\otimes a_n\otimes a\partial_s(b)\otimes c\otimes b_2\otimes\cdots\otimes b_{m+1}.
\end{multline*}

\subsection{Generalized Bott-Samelson varieties}\label{Generalized Bott-Samelson varieties} For any nonempty set $S$ of simple reflections,
let $G_S$ be the subgroup of $G$ generated by all groups $G_s$, where $s\in S$.
Let $S=(S_1,\ldots,S_n)$ be a sequence of nonempty sets of reflections. Then we define
$$
\BS(S)=G_{S_1}\mathop{\times}\limits_K{G_{S_2}}\mathop{\times}\limits_K\cdots\mathop{\times}\limits_KG_{S_n}/K
$$
Clearly, if each $S_i$ is a singleton, then $\BS(S)=\BS(s)$, where $S_i=\{s_i\}$ and $s=(s_1,\ldots,s_n)$.
Bearing in mind this fact, we will omit brackets for singletons. For example, we abbreviate
$\BS(\{s\},\{t,r\},\{p\},\{q\})$ to $\BS(s,\{t,r\},p,q)$. We denote the equivariant cohomologies by
$H(S)=H_K^\bullet(\BS(S),\k)$, use the similar abbreviation and draw $H(S)$ and the identical morphisms
between them similarly to the cohomologies and morphisms for the usual Bott-Samelson varieties. For example,
the identity map $H(s,\{t,r\},p,q)\to H(s,\{t,r\},p,q)$ is depicted as

\vspace{-6pt}

\begin{center}
\begin{tikzpicture}
\draw[dashed] (0,1.5) -- (4,1.5);
\draw[dashed] (0,0) -- (4,0);
\draw[red] (0.5,0)  node[anchor=north]{\footnotesize$s$};
\draw(1.5,0)  node[anchor=north]{\footnotesize$\color{black}\{\color{blue}t\color{black},\color{green}r\color{black}\}$};
\draw[orange] (2.5,0)  node[anchor=north]{\footnotesize$p$};
\draw[violet] (3.5,0)  node[anchor=north]{\footnotesize$q$};

\draw[red] (0.5,1.5)  node[anchor=south]{\footnotesize$s$};
\draw(1.5,1.5)  node[anchor=south]{\footnotesize$\color{black}\{\color{blue}t\color{black},\color{green}r\color{black}\}$};
\draw[orange] (2.5,1.5)  node[anchor=south]{\footnotesize$p$};
\draw[violet] (3.5,1.5)  node[anchor=south]{\footnotesize$q$};
\draw[red] (0.5,0) -- (0.5,1.5);
\draw[blue,dash pattern= on 4pt off 4pt] (1.5,0) -- (1.5,1.5);
\draw[green,dash pattern= on 4pt off 4pt,dash phase=4pt] (1.5,0) -- (1.5,1.5);

\draw[orange] (2.5,0) -- (2.5,1.5);
\draw[violet] (3.5,0) -- (3.5,1.5);
\end{tikzpicture}
\end{center}

\vspace{-4pt}

We will consider here only the case where each set $S_i$ is a singleton with exactly one exception
for which this set consists of two distinct simple reflections (as in the example above).
These varieties also have complex structures coming from algebraic realization similar to~(\ref{eq:BSalg}).

\subsection{Two-color morphisms}\label{Two-color morphisms}
For the simplicity of exposition, we will omit the coordinatization maps $\theta_s$,
thus thinking of them as identity maps. We denote
$$
x_s=\frac{\alpha_s}2,\quad c_s=x_s\otimes 1+1\otimes x_s
$$
for any simple reflection $s$.

If we consider $H(s)$ as a left $\R$-module, then it has the following $\R$-basis:
$$
\{1\otimes x_{s_1}^{\epsilon_1}\otimes\cdots\otimes x_{s_n}^{\epsilon_n}\suchthat \epsilon_1,\ldots,\epsilon_n\in\{0,1\}\}.
$$
It consists of $2^n$ elements, where $n$ is the length of $s$.
We denote by $H(s)^{<}$ the left $\R$-sub\-module of $H(s)$ generated by all
above basis elements such that $\epsilon_i=0$ for some $i$.
The remaining basis element $1\otimes x_{s_1}\otimes\cdots\otimes x_{s_n}$
will be called the {\it normal element} of $H(s)$, see~\cite[D\'efinition 4.6]{Libedinsky}.

We leave the proof of the following result to the reader.

\begin{proposition}\label{proposition:4}
$c_{s_1}\cdots c_{s_n}=1\otimes x_{s_1}\otimes\cdots\otimes x_{s_n}+h$ for some $h\in H(s)^{<}$.
\end{proposition}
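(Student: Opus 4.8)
The plan is to work in the left $\R$-module $H(s)\cong\R^{\otimes s}=\R\otimes_{\R(s_1)}\cdots\otimes_{\R(s_n)}\R$ and expand the product $c_{s_1}\cdots c_{s_n}$ directly, keeping track of which tensor slots receive the factor $x_{s_i}$ and which receive $1$. Recall that in $\R^{\otimes s}$ each $c_{s_i}=x_{s_i}\otimes 1+1\otimes x_{s_i}$ is interpreted as an element sitting in the $i$-th and $(i+1)$-st tensor factors, and that multiplying two consecutive such elements involves the relation that lets us slide invariant elements of $\R(s_i)$ (equivalently $\R^{s_i}$, by Lemma~\ref{lemma:8}) across the $i$-th tensor symbol. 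The element $x_{s_i}$ itself is \emph{not} $s_i$-invariant, so we cannot slide it; but $x_{s_i}^2$, and more importantly any product of the shape $x_{s_i}\cdot(\text{something})$ that becomes symmetric, can be handled via~(\ref{eq:32}).

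First I would set up the induction on $n$. For $n=1$ the claim is $c_{s_1}=1\otimes x_{s_1}+(x_{s_1}\otimes 1)$, and $x_{s_1}\otimes 1=1\otimes s_1(x_{s_1})+(\text{invariant correction})$ — more precisely, using $x_{s_1}=P_{s_1}(x_{s_1})+\partial_{s_1}(x_{s_1})\frac{\alpha_{s_1}}2$ from~(\ref{eq:32}) and $P_{s_1}(x_{s_1})=0$, $\partial_{s_1}(x_{s_1})=1$ (since $x_{s_1}=\alpha_{s_1}/2$), one checks $x_{s_1}\otimes 1=0\otimes 1+1\otimes x_{s_1}\cdot\partial_{s_1}(x_{s_1})$... actually the cleanest statement is simply that $x_{s_1}\otimes 1-1\otimes x_{s_1}=(x_{s_1}-x_{s_1})\otimes 1=0$ after moving the $\R^{s_1}$-part, so $c_{s_1}=2(1\otimes x_{s_1})$ up to $H(s)^{<}$; since $2$ is invertible this is fine, but in fact the honest bookkeeping gives $c_{s_1}=1\otimes x_{s_1}+h$ with $h=x_{s_1}\otimes 1\in H(s)^{<}$ because $\epsilon_1=0$ is impossible here — wait, $x_{s_1}\otimes 1$ has $\epsilon_1=1$. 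The correct observation is: $x_{s_1}\otimes 1$ is a \emph{left} multiple, $x_{s_1}\cdot(1\otimes 1)$, and $1\otimes 1$ has all $\epsilon_i=0$, hence $x_{s_1}\otimes 1\in\R\cdot(1\otimes 1)\subset H(s)^{<}$. This is the key mechanism: any monomial with a bare $x$ in slot $1$ is a left-$\R$-multiple of a basis element missing that slot, so it lies in $H(s)^{<}$.

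For the inductive step, write $c_{s_1}\cdots c_{s_n}=(c_{s_1}\cdots c_{s_{n-1}})\cdot c_{s_n}$. By induction $c_{s_1}\cdots c_{s_{n-1}}=1\otimes x_{s_1}\otimes\cdots\otimes x_{s_{n-1}}+h'$ with $h'\in H(s')^{<}$ (viewing everything inside $\R^{\otimes s}$, i.e. tensored with a final $\otimes\,\R$). Multiplying on the right by $c_{s_n}=x_{s_n}\otimes 1+1\otimes x_{s_n}$ acts in the last two slots. The term $(1\otimes x_{s_1}\otimes\cdots\otimes x_{s_{n-1}})\cdot(1\otimes\cdots\otimes 1\otimes x_{s_n})=1\otimes x_{s_1}\otimes\cdots\otimes x_{s_n}$ is exactly the normal element. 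The term $(1\otimes x_{s_1}\otimes\cdots\otimes x_{s_{n-1}})\cdot(1\otimes\cdots\otimes x_{s_n}\otimes 1)$ has a bare $x_{s_n}$ in slot $n$ acting by right multiplication on the slot-$n$ entry $x_{s_{n-1}}$, producing $1\otimes x_{s_1}\otimes\cdots\otimes x_{s_{n-1}}x_{s_n}\otimes 1$, which has $\epsilon_{n+1}=0$ in the $\R^{\otimes s}$-indexing (the last slot is $1$), hence lies in $H(s)^{<}$. Finally, $h'\cdot c_{s_n}$: since $h'$ is a left-$\R$-combination of basis monomials each missing some slot $i\le n-1$, and right multiplication by $c_{s_n}$ only touches slots $n,n+1$, every resulting monomial still misses slot $i$, so $h'\cdot c_{s_n}\in H(s)^{<}$. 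Collecting, $c_{s_1}\cdots c_{s_n}=1\otimes x_{s_1}\otimes\cdots\otimes x_{s_n}+h$ with $h\in H(s)^{<}$.

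\textbf{Main obstacle.} The only genuinely delicate point is the bookkeeping when a bare $x_{s_i}$ ends up needing to be pushed across a tensor symbol $\otimes_{\R(s_i)}$ — this is where one must use~(\ref{eq:32}) to split $x_{s_i}\cdot(\,\cdot\,)$ (or a product $x_{s_{i-1}}x_{s_i}$ landing in slot $i$) into an $\R^{s_i}$-invariant part, which slides freely, plus a remainder of the form $(\text{invariant})\cdot x_{s_i}$; the invariant part, when slid across, lowers the count of "active" slots and hence stays in $H(s)^{<}$, while the remainder keeps $x_{s_i}$ localized. One should verify that this splitting never resurrects the normal monomial — it does not, because each such move strictly decreases the number of slots carrying a factor and the normal monomial is the unique one with \emph{all} slots active. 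I would present this as a short lemma: \emph{for any $i$ and any $f\in\R$, the element $\cdots\otimes f x_{s_i}\otimes 1\otimes\cdots$ (bare $x_{s_i}$ in slot $i$, then $1$) lies in $H(s)^{<}$}, proved by one application of~(\ref{eq:32}) plus the sliding relation, and then the induction above goes through mechanically.
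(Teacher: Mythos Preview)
Your inductive argument is essentially correct, and since the paper explicitly leaves this proof to the reader, there is nothing to compare against. Two remarks will sharpen the write-up considerably.

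First, the base case discussion wanders: the only thing to say is that $x_{s_1}\otimes 1=x_{s_1}\cdot(1\otimes 1)$ is a left-$\R$-multiple of the basis element with $\epsilon_1=0$, hence lies in $H(s)^{<}$.

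Second, and more importantly, your ``Main obstacle'' paragraph is a red herring. Your own inductive argument never needs~(\ref{eq:32}) or any sliding of $x_{s_i}$ across a tensor sign. The two facts that carry the induction are purely structural:
\begin{itemize}
\item[(a)] Any element of the form $v\otimes 1$ with $v\in\R^{\otimes s'}$ lies in $H(s)^{<}$. Indeed, expand $v$ in the left-$\R$-basis $\{1\otimes x_{s_1}^{\epsilon_1}\otimes\cdots\otimes x_{s_{n-1}}^{\epsilon_{n-1}}\}$ of $\R^{\otimes s'}$; then $v\otimes 1$ is a left-$\R$-combination of basis elements of $\R^{\otimes s}$ with $\epsilon_n=0$. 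This is exactly what justifies your line ``the last slot is $1$, hence lies in $H(s)^{<}$'', which you state but do not argue.
\item[(b)] The left-$\R$-map $m\mapsto m\otimes x_{s_n}$ sends the basis element $1\otimes x_{s_1}^{\epsilon_1}\otimes\cdots\otimes x_{s_{n-1}}^{\epsilon_{n-1}}$ of $\R^{\otimes s'}$ to a basis element of $\R^{\otimes s}$ with the same $\epsilon_1,\ldots,\epsilon_{n-1}$; hence $H(s')^{<}\otimes x_{s_n}\subset H(s)^{<}$.
\end{itemize}
With (a) and (b) stated as a one-line lemma, the inductive step is three lines: the product $(1\otimes x_{s_1}\otimes\cdots\otimes x_{s_{n-1}}\otimes 1+h'\otimes 1)\cdot c_{s_n}$ yields the normal element from the $(1\otimes x_{s_n})$-half of the first summand, an element with last slot $1$ from the $(x_{s_n}\otimes 1)$-half (use (a)), and $h'\otimes 1\cdot c_{s_n}\in H(s)^{<}$ by (a) and (b). Drop the ``obstacle'' paragraph entirely.
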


For any distinct simple reflections $s$ and $t$, we denote by $m_{s,t}$
the order of the product $st$. We have $m_{s,t}=m_{t,s}\in\{2,3,4,6\}$.
We define the following sequence
$$
[s,t]=\left\{\!\!
\begin{array}{ll}
(s,t),&\text{ if }m_{s,t}=2,\\
(s,t,s),&\text{ if }m_{s,t}=3,\\
(s,t,s,t)&\text{ if }m_{s,t}=4,\\
(s,t,s,t,s,t)&\text{ if }m_{s,t}=6.
\end{array}
\right.
$$
The product of the reflections of $[s,t]$ is denoted by $w_{s,t}$. The braid relation for this pair reads as $w_{s,t}=w_{t,s}$.
We will use the notation $\BS(s,t,\ldots)=\BS([s,t])$ and $H(s,t,\ldots)=H([s,t])$
and denote by $1\otimes x_s\otimes x_t\otimes\cdots$ the normal element of $H(s,t,\ldots)$.
Similarly $c_sc_t\cdots$ denotes one of the product $c_sc_t$, $c_sc_tc_s$, $c_sc_tc_sc_t$, $c_sc_tc_sc_tc_sc_t$
depending on $m_{s,t}$.

The image of the twisting map $A_{[s,t]}$, see~(\ref{eq:30}),
is actually contained in the Schubert variety
$X_{s,t}=\overline{K_{w_{s,t}}/K}=\BS(\{s,t\})$. Thus we get the map
$\eta_{s,t}:\BS(s,t,\ldots)\to X_{s,t}$. The twisting maps for both sides are $A_{[s,t]}$
and the natural inclusion to $G/K$, respectively. Note also that both these spaces
are smooth manifolds having real dimension $2m_{s,t}$. Therefore, we get the following two morphisms of $\R$-$\R$-bimodules:
$$
\eta_{s,t}^\star:H_K^\bullet(X_{s,t},\k)\to H(s,t,\ldots),\quad
\eta_{s,t\star}:H(s,t,\ldots)\to H_K^\bullet(X_{s,t},\k)
$$
presented by the diagrams

\vspace{-5pt}
$\displaystyle
(\eta_{s,t}^\star)\hfill
\begin{tikzpicture}[baseline=19pt]
\draw[dashed] (-0.5,0) -- (3.5,0);
\draw[dashed] (-0.5,1.5) -- (3.5,1.5);
\draw[red] (1.5,0) node[anchor=north]{\footnotesize$\color{black}\{\color{red}s\color{black},\color{blue}t\color{black}\}$} -- (0,1.5) node[anchor=south]{\footnotesize$s$};
\draw[blue] (1.5,0) -- (0.75,1.5) node[anchor=south]{\footnotesize$t$};
\draw[purple] (1.5,0) -- (3,1.5);
\draw  (1.75,1) node{\footnotesize$\cdots$};
\end{tikzpicture}
\hspace{60pt}
\begin{tikzpicture}[baseline=19pt]
\draw[dashed] (-0.5,0) -- (3.5,0);
\draw[dashed] (-0.5,1.5) -- (3.5,1.5);
\draw[red] (1.5,1.5) node[anchor=south]{\footnotesize$\color{black}\{\color{red}s\color{black},\color{blue}t\color{black}\}$} -- (0,0) node[anchor=north]{\footnotesize$s$};
\draw[blue] (1.5,1.5) -- (0.75,0) node[anchor=north]{\footnotesize$t$};
\draw[purple] (1.5,1.5) -- (3,0);
\draw  (1.75,0.5) node{\footnotesize$\cdots$};
\end{tikzpicture}
\hfill(\eta_{s,t\star})
$
\vspace{1pt}

\noindent
Here the purple line can be either red or blue, depending on the parity of $m_{s,t}$.

First, we calculate the following composition.
\begin{lemma}\label{lemma:10}
$\eta_{s,t\star}\eta_{s,t}^\star=\id$.
\end{lemma}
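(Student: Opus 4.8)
The plan is to deduce the identity from the projection formula together with the fact that $\eta_{s,t}$ has topological degree $1$. First we would record that $\eta_{s,t}$ is an algebraic, hence smooth and proper, $K$-equivariant map between the compact complex manifolds $\BS(s,t,\ldots)$ and $X_{s,t}$, both of real dimension $2m_{s,t}$, so that the equivariant push-forward $\eta_{s,t\star}$ carries no degree shift. Applying the ordinary projection formula to $\eta_{s,t}\ltimes{K}\id\colon\BS(s,t,\ldots)\ltimes{K}E^N\to X_{s,t}\ltimes{K}E^N$ and taking the limit $N\to\infty$ in the by now familiar way (cf. the proof of the commutativity of diagram~(\ref{eq:26})), we obtain $\eta_{s,t\star}\big(\eta_{s,t}^\star(m)\cup n\big)=m\cup\eta_{s,t\star}(n)$ for all $m\in H_K^\bullet(X_{s,t},\k)$ and $n\in H(s,t,\ldots)$. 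Taking $n=1$ and using $\eta_{s,t}^\star(1)=1$ then gives $\eta_{s,t\star}\eta_{s,t}^\star(m)=m\cup\eta_{s,t\star}(1)$, so the statement reduces to proving $\eta_{s,t\star}(1)=1$ in the ring $H_K^0(X_{s,t},\k)\cong\k$.

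Next we would unwind $\eta_{s,t\star}(1)=(i_{X_{s,t}}^*)^{-1}(\eta_{s,t}\ltimes{K}\id)_*(1)$ from definition~(\ref{eq:17}). By Proposition~\ref{proposition:pb:1} the map $\eta_{s,t}\ltimes{K}\id$ is a morphism of fibre bundles over $K\lq E^N$ whose restriction to each fibre is $\eta_{s,t}$; since $K\lq E^N$ is connected by Corollary~\ref{corollary:sc}, this is a proper map of connected oriented manifolds of the same dimension, whence $(\eta_{s,t}\ltimes{K}\id)_*(1)=\deg(\eta_{s,t}\ltimes{K}\id)\cdot1$ and $\deg(\eta_{s,t}\ltimes{K}\id)=\deg(\eta_{s,t})$. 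Being holomorphic, $\eta_{s,t}$ is orientation preserving for the complex orientations fixed in this section, and hence so is $\eta_{s,t}\ltimes{K}\id$ for the orientations on the Borel constructions described in Section~\ref{Smooth structure on the Borel constructions}. Therefore $\eta_{s,t\star}(1)=\deg(\eta_{s,t})\cdot1$, and it remains to show $\deg(\eta_{s,t})=1$.

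For the last point we would use that $[s,t]=(s_1',\ldots,s_m')$ with $m=m_{s,t}$ is a reduced expression for $w_{s,t}$, its length being $\l(w_{s,t})$ in the dihedral reflection subgroup generated by $s$ and $t$. Thus $\eta_{s,t}\colon\BS(s,t,\ldots)\to X_{s,t}=\overline{K_{w_{s,t}}/K}$ is precisely the Bott--Samelson resolution of this Schubert variety, $[g_1:\cdots:g_m\rr\mapsto g_1\cdots g_mK$, and over the open dense cell $K_{w_{s,t}}/K$ it is an isomorphism: in the algebraic model~(\ref{eq:BSalg}) the multiplication map $(Bs_1'B)\times_B\cdots\times_B(Bs_m'B)\to Bw_{s,t}B$ is bijective because the word is reduced. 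A holomorphic map of connected compact complex manifolds of the same dimension that is generically one-to-one has degree $1$, so $\deg(\eta_{s,t})=1$, whence $\eta_{s,t\star}(1)=1$ and $\eta_{s,t\star}\eta_{s,t}^\star=\id$.

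The step I expect to be the main obstacle is the last one: identifying $\eta_{s,t}$ with the Bott--Samelson resolution of $X_{s,t}$ and establishing its birationality directly from the definitions, in a manner compatible with the identification $X_{s,t}=\BS(\{s,t\})$ and with the complex orientations used throughout. This is the only place where input beyond the formal machinery of the paper (the projection formula, the definition of $f_\star$, and the properties of the $E^N$) is required. A secondary, purely bookkeeping concern is to check that $\eta_{s,t}\ltimes{K}\id$ matches the orientations of $\BS(s,t,\ldots)\ltimes{K}E^N$ and $X_{s,t}\ltimes{K}E^N$ from Section~\ref{Smooth structure on the Borel constructions}, so that the degree comes out $+1$ and not $-1$.
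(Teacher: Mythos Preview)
Your proposal is correct and follows essentially the same approach as the paper: reduce via the projection formula to showing $\eta_{s,t\star}(1)=1$, then use that $\eta_{s,t}$ is an orientation-preserving isomorphism over a dense open subset of $X_{s,t}$. The paper phrases this last step as a base-change to the open subset $V\subset X_{s,t}$ over which $\eta_{s,t}$ is a diffeomorphism (applying the equivariant version of~(\ref{eq:24}) to the Cartesian square with $j_V$ and using connectedness of $X_{s,t}$) rather than via the topological degree, but the substance is the same.
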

\begin{proof}
Applying the projection formula, we get
$$
\eta_{s,t\star}\eta_{s,t}^\star(h)=\eta_{s,t\star}(1\cup\eta_{s,t}^\star(h))=\eta_{s,t\star}(1)\cup h
$$
for any $h\in H_K^\bullet(X_{s,t},\k)$. Therefore, it suffices to prove that $\eta_{s,t\star}(1)=1$.
There is an open subset $V\subset X_{s,t}$ such that
$\eta_{s,t}$ induces the orientation preserving diffeomorphism $\eta'_{s,t}:U\ito V$, where $U=\eta_{s,t}^{-1}(V)$.
Consider the following Cartesian square
$$
\begin{tikzcd}
U\arrow[hook]{r}{j_U}\arrow{d}{\wr}[swap]{\eta'_{s,t}}&\BS(s,t,\ldots)\arrow{d}{\eta_{s,t}}\\
V\arrow[hook]{r}{j_V}&X_{s,t}
\end{tikzcd}
$$
By (the equariant version of)~(\ref{eq:24}), we get
$$
j_V^\star\eta_{s,t\star}(1)=\eta'_{s,t\star}j_U^\star(1)=\eta'_{s,t\star}(1)=1.
$$
As $X_{s,t}$ is connected, the above calculation proves that $\eta_{s,t\star}(1)=1$.
\end{proof}
Diagramatically expresed this lemma looks as follows:
\vspace{-5pt}
\begin{equation*}
\begin{tikzpicture}[baseline=29pt,y=40pt]
\draw[dashed] (-0.5,0) -- (3.5,0);
\draw[dashed] (-0.5,1.5) -- (3.5,1.5);
\draw  (1.75,0.75) node{\footnotesize$\cdots$};
\draw [red] plot [smooth,tension=1.2] coordinates {(1.5,0) (0.25,0.75) (1.5,1.5)};
\draw [blue] plot [smooth,tension=1.2] coordinates {(1.5,0) (1,0.75) (1.5,1.5)};
\draw [purple] plot [smooth,tension=1.2] coordinates {(1.5,0) (2.75,0.75) (1.5,1.5)};
\draw (1.5,0) node[anchor=north]{\footnotesize$\color{black}\{\color{red}s\color{black},\color{blue}t\color{black}\}$};
\draw (1.5,1.5) node[anchor=south]{\footnotesize$\color{black}\{\color{red}s\color{black},\color{blue}t\color{black}\}$};
\end{tikzpicture}
\qquad
=
\qquad
\begin{tikzpicture}[baseline=29pt,y=40pt]
\draw[dashed] (-0.5,0) -- (3.5,0);
\draw[dashed] (-0.5,1.5) -- (3.5,1.5);
\draw[red,dash pattern= on 4pt off 4pt] (1.5,0) -- (1.5,1.5);
\draw[blue,dash pattern= on 4pt off 4pt,dash phase=4pt] (1.5,0) node[anchor=north]{\footnotesize$\color{black}\{\color{red}s\color{black},\color{blue}t\color{black}\}$}-- (1.5,1.5)node[anchor=south]{\footnotesize$\color{black}\{\color{red}s\color{black},\color{blue}t\color{black}\}$};
\end{tikzpicture}
\end{equation*}
\vspace{1pt}

\subsection{The $2m_{s,t}$-valent vertex} Here, we are going to consider
the composition $\eta_{t,s}^\star\eta_{s,t\star}:H(s,t,\ldots)\to H(t,s,\ldots)$,
which is a morphism of $\R$-$\R$-bimodules. It is depicted as follows:

\vspace{-5pt}
\begin{equation}\tag{$\eta_{t,s}^\star\eta_{s,t\star}$}
\begin{tikzpicture}[baseline=29pt,y=40pt]
\draw[dashed] (-0.5,0) -- (3.5,0);
\draw[dashed] (-0.5,1.5) -- (3.5,1.5);
\draw[red] (1.5,0.75) -- (0,0) node[anchor=north]{\footnotesize$s$};
\draw[blue] (1.5,0.75) -- (0.75,0) node[anchor=north]{\footnotesize$t$};
\draw[purple] (1.5,0.75) -- (3,0);
\draw  (1.75,0.25) node{\footnotesize$\cdots$};
\draw[blue] (1.5,0.75) -- (0,1.5) node[anchor=south]{\footnotesize$t$};
\draw[red] (1.5,0.75) -- (0.75,1.5) node[anchor=south]{\footnotesize$s$};
\draw[purple] (1.5,0.75) -- (3,1.5);
\draw  (1.75,1.2) node{\footnotesize$\cdots$};
\end{tikzpicture}
\end{equation}

Note that all such morphisms
are proportional~\cite[Proposition 4.3]{Libedinsky}.


\begin{lemma}\label{lemma:12}
$\ker\mu_{s,t\star}\subset H(s,t,\ldots)^{<}$.
\end{lemma}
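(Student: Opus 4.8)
The plan is to pair the push-forward $\eta_{s,t\star}$ with the pull-back $\eta_{t,s}^\star$ running back from $X_{s,t}$, exploiting that $X_{t,s}=X_{s,t}$ by the braid relation $w_{s,t}=w_{t,s}$. By Lemma~\ref{lemma:10} with $s$ and $t$ interchanged, $\eta_{t,s\star}\eta_{t,s}^\star=\id$, so $\eta_{t,s}^\star$ is injective and $\ker\eta_{s,t\star}=\ker\Phi$, where $\Phi:=\eta_{t,s}^\star\eta_{s,t\star}\colon H(s,t,\ldots)\to H(t,s,\ldots)$ is the $2m_{s,t}$-valent morphism. Put $m=m_{s,t}$ and let $e,e'$ be the normal elements of $H(s,t,\ldots)$ and $H(t,s,\ldots)$; then $H(s,t,\ldots)=\R e\oplus H(s,t,\ldots)^{<}$ as graded left $\R$-modules, with $\deg e=\deg e'=2m$ and $H(\,\cdot\,)^{<}$ generated over $\R$ by the basis monomials $1\otimes x_{s_1}^{\epsilon_1}\otimes\cdots$ having some $\epsilon_i=0$, which lie in degrees $\le 2m-2$. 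I will establish (a) that $\Phi\bigl(H(s,t,\ldots)^{<}\bigr)\subseteq H(t,s,\ldots)^{<}$, and (b) that $\Phi(e)=ue'+h''$ with $u\in\k^{\times}$ and $h''\in H(t,s,\ldots)^{<}$. Granting these, writing any $z\in\ker\Phi$ as $z=re+h$ with $r\in\R$, $h\in H(s,t,\ldots)^{<}$, one gets $0=\Phi(z)=rue'+(rh''+\Phi(h))$ with $rh''+\Phi(h)\in H(t,s,\ldots)^{<}$; the direct-sum decomposition of $H(t,s,\ldots)$ forces $rue'=0$, hence $r=0$ (as $u$ is a unit and $\R e'$ is free of rank one), so $z=h\in H(s,t,\ldots)^{<}$.

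Claim (a) is immediate: $\Phi$ is a degree-zero homomorphism of left $\R$-modules by Lemmas~\ref{lemma:pull-back} and~\ref{lemma:push-forward}, the $\R$-generators of $H(s,t,\ldots)^{<}$ have degree $\le 2m-2$, and in degrees $<2m$ one has $H(t,s,\ldots)=H(t,s,\ldots)^{<}$ because $\R e'$ sits in degrees $\ge 2m$; since $H(t,s,\ldots)^{<}$ is a left $\R$-submodule, the images of those generators — hence of all of $H(s,t,\ldots)^{<}$ — land in it.

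Claim (b) is the main obstacle, and I will pin down the coefficient $u$ non-equivariantly. Since $\Phi$ and $e$ are homogeneous of degrees $0$ and $2m$, $\Phi(e)\in H(t,s,\ldots)_{2m}=\k e'\oplus H(t,s,\ldots)^{<}_{2m}$; write $\Phi(e)=ue'+h''$. Apply the forgetful ring homomorphism $\mathrm{fgt}\colon H_K^\bullet(-,\k)\to H^\bullet(-,\k)$ (restriction to a fibre of the Borel construction over $K\lq E$): it is natural for equivariant pull-backs and push-forwards — the latter because a fibre of $Y\,{}_K\!\times E^N\to K\lq E^N$ is a copy of $Y$, so the assertion reduces to the naturality of push-forward~(\ref{eq:24}) — and it annihilates $\R_{>0}\cdot H_K^\bullet(-,\k)$. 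As $h''$ lies in the $\R_{>0}$-span of the degree-$<2m$ basis monomials, $\mathrm{fgt}(h'')=0$, so $\eta_{t,s}^*\eta_{s,t*}(\bar e)=u\,\bar e'$, where $\bar e=\mathrm{fgt}(e)$, $\bar e'=\mathrm{fgt}(e')$ and $\eta_{s,t*},\eta_{t,s}^*$ are the ordinary push-forward and pull-back. Now $\eta_{s,t}$ and $\eta_{t,s}$ are Bott–Samelson resolutions of the Schubert variety $X_{w_{s,t}}=X_{s,t}$: degree-one holomorphic maps between smooth compact connected complex manifolds of equal dimension that restrict to isomorphisms over the open dense cell $V\subset X_{s,t}$ (the $V$ already used in the proof of Lemma~\ref{lemma:10}). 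Hence $\eta_{s,t*}(\bar e)$ is a unit times the class of a point of $X_{s,t}$ (push-forward of a top generator along a degree-one map), and $\eta_{t,s}^*$ of that is a unit times the class of a point of $\BS(t,s,\ldots)$ — the preimage of a generic point of $V$ being a single reduced point — hence a unit times $\bar e'$, which generates $H^{2m}(\BS(t,s,\ldots),\k)\cong\k$ by the stated basis. Comparing with $\eta_{t,s}^*\eta_{s,t*}(\bar e)=u\,\bar e'$ gives $u\in\k^\times$, completing the argument. The only bookkeeping required is that $\mathrm{fgt}$ really intertwines the equivariant $f_\star$ of Section~\ref{push-forward} with the ordinary push-forward (via the Cartesian square above) and that the normal element generates $H^{2m}$ non-equivariantly, which is clear from the basis.
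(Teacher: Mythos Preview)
Your proof is correct. The overall architecture matches the paper's: pass from $\ker\eta_{s,t\star}$ to the $2m_{s,t}$-valent morphism $\Phi=\eta_{t,s}^\star\eta_{s,t\star}$, observe by degree reasons that $\Phi$ preserves the ``lower'' submodule, and then control the coefficient of the normal element $e'$ in $\Phi(e)$ to force the top coefficient of any kernel element to vanish. Where you diverge is precisely in that last step. The paper disposes of it in one line by citing \cite[Proposition~4.3 and Lemme~4.7]{Libedinsky}, i.e.\ by importing the classification of degree-zero bimodule morphisms $H(s,t,\ldots)\to H(t,s,\ldots)$ and the normalization of $f_{s,t}$. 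You instead give a self-contained geometric argument: apply the forgetful map to ordinary cohomology, kill the lower terms (which lie in $\R_{>0}\cdot H$), and reduce to showing that the non-equivariant composition $\eta_{t,s}^*\eta_{s,t*}$ carries a generator of $H^{2m_{s,t}}(\BS(s,t,\ldots),\k)$ to a generator of $H^{2m_{s,t}}(\BS(t,s,\ldots),\k)$, which is immediate because both $\eta_{s,t}$ and $\eta_{t,s}$ are degree-one holomorphic maps between smooth compact complex manifolds of the same dimension. Your route is more elementary and logically cleaner: it does not rely on any prior knowledge of $f_{s,t}$, whereas the paper's citation presupposes that $\Phi$ is a \emph{unit} multiple of $f_{s,t}$, a fact the paper itself only pins down afterwards in Lemma~\ref{lemma:13} (whose proof in turn invokes Lemma~\ref{lemma:12}). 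The paper's approach buys brevity; yours buys independence from the Soergel-bimodule literature and transparency over an arbitrary coefficient ring.
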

\begin{proof}
Let $h\in\ker\mu_{s,t\star}$. We have $h=\alpha\otimes x_s\otimes x_t\otimes\cdots+h'$ for some $h'\in H(s,t,\ldots)^{<}$.
For the degree reason, we get $\eta_{t,s}^\star\eta_{s,t\star}(h')\in H(t,s,\ldots)^{<}$.
Therefore, it follows from $\eta_{t,s}^\star\eta_{s,t\star}(h)=0$ that
$$
\alpha\,\eta_{t,s}^\star\eta_{s,t\star}(1\otimes x_s\otimes x_t\otimes\cdots)\in H(t,s,\ldots)^{<}.
$$
By~\cite[Proposition 4.3 and Lemme 4.7]{Libedinsky}, this is only possible if $\alpha=0$.
\end{proof}

Our next aim is to prove the following normalization condition.

\begin{lemma}\label{lemma:13}
$\eta_{t,s}^\star\eta_{s,t\star}(1\otimes x_s\otimes x_t\otimes\cdots)=1\otimes x_t\otimes x_s\otimes\cdots+h$
for some $h\in H(t,s,\ldots)^{<}$.
\end{lemma}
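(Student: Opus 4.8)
The plan is to show that the constant implicit in the statement equals $1$, i.e.\ that $\eta_{t,s}^\star\eta_{s,t\star}(1\otimes x_s\otimes x_t\otimes\cdots)=(1\otimes x_t\otimes x_s\otimes\cdots)+h$ with $h\in H(t,s,\ldots)^{<}$. First I would record why the statement has the asserted shape: the map $\eta_{t,s}^\star\eta_{s,t\star}$ preserves cohomological degree, since $\eta_{s,t\star}$ raises degree by $\dim X_{s,t}-\dim\BS(s,t,\ldots)=0$ and $\eta_{t,s}^\star$ preserves degree, so it sends the degree-$2m_{s,t}$ element $1\otimes x_s\otimes x_t\otimes\cdots$ to a class of degree $2m_{s,t}$ in $H(t,s,\ldots)$. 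In the left $\R$-basis $\{1\otimes x_{t_1}^{\epsilon_1}\otimes\cdots\otimes x_{t_{m_{s,t}}}^{\epsilon_{m_{s,t}}}\}$ of $H(t,s,\ldots)$ (where $[t,s]=(t_1,\ldots,t_{m_{s,t}})$) only the normal element $1\otimes x_t\otimes x_s\otimes\cdots$ lies in degree $2m_{s,t}$, every other basis vector having strictly smaller degree; hence the degree-$2m_{s,t}$ part of $H(t,s,\ldots)$ is $\k\cdot(1\otimes x_t\otimes x_s\otimes\cdots)\oplus(H(t,s,\ldots)^{<})_{2m_{s,t}}$, which yields $\eta_{t,s}^\star\eta_{s,t\star}(1\otimes x_s\otimes x_t\otimes\cdots)=\lambda\,(1\otimes x_t\otimes x_s\otimes\cdots)+h$ for a unique $\lambda\in\k$ and $h\in H(t,s,\ldots)^{<}$. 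It remains to prove $\lambda=1$.

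To pin down $\lambda$ I would apply the equivariant Gysin map $\int_{\BS(t,s,\ldots)}:=(a_{\BS(t,s,\ldots)})_\star\colon H_K^\bullet(\BS(t,s,\ldots),\k)\to H_K^{\bullet-2m_{s,t}}(\pt,\k)$ to both sides. On the one hand, $\int_{\BS(t,s,\ldots)}$ is a homomorphism of left $\R$-modules by Lemma~\ref{lemma:push-forward} (since $\R=H_K^\bullet(\pt,\k)$ is concentrated in even degrees, $K$ being a torus), and it lowers degree by $2m_{s,t}$; as $\R$ is concentrated in non-negative degrees it therefore annihilates every basis vector of degree $<2m_{s,t}$, hence annihilates $h$, while $\int_{\BS(t,s,\ldots)}(1\otimes x_t\otimes x_s\otimes\cdots)\in\R^0=\k$. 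On the other hand, $\eta_{t,s}$ is birational: by the proof of Lemma~\ref{lemma:10} it restricts to an isomorphism over a non-empty open subset of $X_{s,t}$. Hence $(\eta_{t,s})_\star(1)\in H_K^0(X_{s,t},\k)=\k$ equals the degree of $\eta_{t,s}$, namely $1$, and the projection formula gives $(\eta_{t,s})_\star\eta_{t,s}^\star=\mathrm{id}$. Using this together with the factorizations $a_{\BS(t,s,\ldots)}=a_{X_{s,t}}\circ\eta_{t,s}$ and $a_{\BS(s,t,\ldots)}=a_{X_{s,t}}\circ\eta_{s,t}$, I get $\int_{\BS(t,s,\ldots)}\eta_{t,s}^\star\eta_{s,t\star}=\int_{X_{s,t}}\eta_{s,t\star}=\int_{\BS(s,t,\ldots)}$. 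Comparing the images of the two sides of the displayed equation therefore yields the identity $\int_{\BS(s,t,\ldots)}(1\otimes x_s\otimes x_t\otimes\cdots)=\lambda\int_{\BS(t,s,\ldots)}(1\otimes x_t\otimes x_s\otimes\cdots)$ in $\k$.

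The proof is then closed by the computation $\int_{\BS(r_1,\ldots,r_n)}(1\otimes x_{r_1}\otimes\cdots\otimes x_{r_n})=(-1)^n$, valid for every sequence of simple reflections; since $m_{s,t}=m_{t,s}$, both sides above then equal $(-1)^{m_{s,t}}$, a unit of $\k$, forcing $\lambda=1$. For this I would induct on $n$, the case $n=0$ being $\int_{\pt}1=1$. For $n\ge1$ one uses that the forgetful projection $\pi\colon\BS(r_1,\ldots,r_n)\to\BS(r_1,\ldots,r_{n-1})$ is a $\mathbb P^1$-bundle whose pull-back $\pi^\star$ appends a trivial last tensor factor (cf.\ the inductive construction~(\ref{eq:10}) and the algebraic realization~(\ref{eq:BSalg})), so that $1\otimes x_{r_1}\otimes\cdots\otimes x_{r_n}=\pi^\star(1\otimes x_{r_1}\otimes\cdots\otimes x_{r_{n-1}})\cup(1\otimes\cdots\otimes1\otimes x_{r_n})$. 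By the projection formula $\pi_\star(1\otimes x_{r_1}\otimes\cdots\otimes x_{r_n})=(1\otimes x_{r_1}\otimes\cdots\otimes x_{r_{n-1}})\cdot\pi_\star(1\otimes\cdots\otimes1\otimes x_{r_n})$, and $\pi_\star(1\otimes\cdots\otimes1\otimes x_{r_n})\in\R^0$ equals the fibrewise integral $\int_{G_{r_n}/K}(1\otimes x_{r_n})$; writing $1\otimes x_{r_n}=c_{r_n}-x_{r_n}\otimes1$ with $\theta_{r_n}(c_{r_n})=-\iota_{r_n\star}(1)$ and noting that $\int_{G_{r_n}/K}$ kills $x_{r_n}\otimes1$ (it equals $x_{r_n}\cdot\int_{G_{r_n}/K}1=0$ for degree reasons), this fibrewise integral is $-\int_{G_{r_n}/K}\iota_{r_n\star}(1)=-(a_{G_{r_n}/K}\circ\iota_{r_n})_\star(1)=-1$. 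Hence $\pi_\star(1\otimes x_{r_1}\otimes\cdots\otimes x_{r_n})=-(1\otimes x_{r_1}\otimes\cdots\otimes x_{r_{n-1}})$, and applying $\int_{\BS(r_1,\ldots,r_{n-1})}\circ\,\pi_\star$ with the inductive hypothesis gives $(-1)^n$. The step I expect to need the most care is the interaction of the equivariant Gysin map with the lower-order term $h$: one must simultaneously verify that $\int_{\BS(t,s,\ldots)}$ annihilates $h$ (a pure grading statement, once one knows the normal element is the unique top-degree basis vector) and that it detects the normal element with an invertible coefficient, and it is exactly this that lets the birationality of $\eta_{s,t}$ and $\eta_{t,s}$ reduce the question to the single integral $\int_{\BS(s,t,\ldots)}(1\otimes x_s\otimes x_t\otimes\cdots)$.
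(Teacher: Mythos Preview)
Your proof is correct and follows a genuinely different route from the paper's. The paper shows that $\eta_{s,t\star}(c_sc_t\cdots)=\eta_{t,s\star}(c_tc_s\cdots)$ because both equal the push-forward of $(-1)^{m_{s,t}}$ from the common point $[1:\cdots:1\rr\in X_{s,t}$; combining Lemma~\ref{lemma:10} with Lemma~\ref{lemma:12} (the latter relying on Libedinsky's classification) it obtains $\eta_{t,s}^\star\eta_{s,t\star}(c_sc_t\cdots)-c_tc_s\cdots\in H(t,s,\ldots)^{<}$, and then passes to the normal elements via Proposition~\ref{proposition:4}. You instead extract the leading coefficient $\lambda$ by integrating over the whole variety: the identity $\int_{\BS(t,s,\ldots)}\circ\,\eta_{t,s}^\star\eta_{s,t\star}=\int_{\BS(s,t,\ldots)}$ (a consequence of $\eta_{t,s\star}\eta_{t,s}^\star=\id$) reduces everything to the single computation $\int_{\BS(r_1,\ldots,r_n)}(1\otimes x_{r_1}\otimes\cdots\otimes x_{r_n})=(-1)^n$. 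This is more self-contained, as it bypasses Lemma~\ref{lemma:12} and its external reference entirely; the paper's argument, on the other hand, never needs to integrate anything beyond a point class. The one step you might tighten is the identification of $\pi_\star(1\otimes\cdots\otimes1\otimes x_{r_n})$ with the fibrewise integral on $G_{r_n}/K$: using only the paper's tools, rewrite this class via~(\ref{eq:29}) as $-\iota_{r',r_n,\emptyset\star}(1\otimes\cdots\otimes1)$ minus a class in the image of $\pi^\star$, then apply $\pi_\star$ directly using $\pi\circ\iota_{r',r_n,\emptyset}=\id$ and $\pi_\star(1)=0$ for degree reasons.
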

\begin{proof}
Let us consider the following chain of inclusions:
$$
\begin{tikzcd}
\pt\arrow{r}{\iota_{\emptyset,s,\emptyset}}&\BS(s)\arrow{r}{\iota_{(s),t,\emptyset}}&[9pt]\BS(s,t)\arrow{r}{\iota_{(s,t),s,\emptyset}}&[12pt]\cdots\arrow{r}{\iota_{(s,t,\ldots),r,\emptyset}}&[20pt]\BS(s,t,\ldots),
\end{tikzcd}
$$
where $r=s$ or $r=t$ depending on the parity of $m_{s,t}$.
We denote the resulting composition by $\iota$. Its image is $[1:1:\cdots:1\rr$.
Taking the equivariant push-forward

\vspace{-10pt}
\begin{equation*}
\begin{tikzpicture}[baseline=29pt]
\draw[dashed] (-0.5,0) -- (3.5,0);
\draw[dashed] (-0.5,1.5) -- (3.5,1.5);
\draw  (1.75,1) node{\footnotesize$\cdots$};
\draw[red] (0,0.2) -- (0,1.5) node[anchor=south]{\footnotesize$s$};
\draw[blue] (0.75,0.5) -- (0.75,1.5) node[anchor=south]{\footnotesize$t$};
\draw[purple] (3,1.2) -- (3,1.5) node[anchor=south]{\footnotesize$r$};
\draw[red,fill=red] (0,0.2) circle(0.06);
\draw[blue,fill=blue] (0.75,0.5) circle(0.06);
\draw[purple,fill=purple] (3,1.2) circle(0.06);
\end{tikzpicture}
\end{equation*}

\bigskip

\noindent
we get by~(\ref{eq:29}) that the image of $(-1)^{m_{s,t}}$
is equal to $c_sc_t\cdots$. Hence $\eta_{s,t\star}(c_sc_t\cdots)=(\eta_{s,t}\iota)_\star((-1)^{m_{s,t}})$.
Arguing similarly, we get $\eta_{t,s\star}(c_tc_s\cdots)=(\eta_{t,s}\iota)_\star((-1)^{m_{s,t}})$.
As $\eta_{s,t}\iota=\eta_{t,s}\iota$, we get by Lemmas~\ref{lemma:10} and~\ref{lemma:12} that
$$
\eta_{t,s}^\star\eta_{s,t\star}(c_sc_t\cdots)-c_tc_s\cdots\in\ker\eta_{t,s\star}\subset H(t,s,\ldots)^{<}.
$$
To conclude the proof, it suffices to apply Proposition~\ref{proposition:4} and the fact
that $\eta_{t,s}^\star\eta_{s,t\star}$, being a degree preseving homomorphism $H(s,t,\ldots)\to H(t,s,\ldots)$
of left $\R$-modules, maps $H(s,t,\ldots)^{<}$ to $H(t,s,\ldots)^{<}$.
\end{proof}

Thus the composition $\eta_{t,s}^\star\eta_{s,t\star}$ is just the map $f_{s,t}$ as in~\cite[Lemme 4.7]{Libedinsky}.

\subsection{The Jones-Wenzl projector}\label{The Jones-Wenzl projector} This map is given by the composition $\eta_{s,t}^\star\eta_{s,t\star}$
and is represented by the diagram:

\vspace{-5pt}
\begin{equation}\tag{$\eta_{s,t}^\star\eta_{s,t\star}$}
\begin{tikzpicture}[baseline=29pt,y=40pt]
\draw[dashed] (-0.5,0) -- (3.5,0);
\draw[dashed] (-0.5,1.5) -- (3.5,1.5);
\draw[red] (1.5,0.75) -- (0,0) node[anchor=north]{\footnotesize$s$};
\draw[blue] (1.5,0.75) -- (0.75,0) node[anchor=north]{\footnotesize$t$};
\draw[purple] (1.5,0.75) -- (3,0);
\draw  (1.75,0.25) node{\footnotesize$\cdots$};
\draw[red] (1.5,0.75) -- (0,1.5) node[anchor=south]{\footnotesize$s$};
\draw[blue] (1.5,0.75) -- (0.75,1.5) node[anchor=south]{\footnotesize$t$};
\draw[purple] (1.5,0.75) -- (3,1.5);
\draw  (1.75,1.2) node{\footnotesize$\cdots$};
\end{tikzpicture}
\end{equation}
\vspace{1pt}

Form Lemma~\ref{lemma:10}, we immediately get the relations

\vspace{-5pt}
\begin{equation*}
\begin{tikzpicture}[baseline=19pt,y=40pt]
\draw[dashed] (-0.5,-0.5) -- (3.5,-0.5);
\draw[dashed] (-0.5,1.5) -- (3.5,1.5);
\draw  (1.75,0.5) node{\footnotesize$\cdots$};
\draw [red] plot [smooth,tension=1.2] coordinates {(1.5,0) (0.25,0.5) (1.5,1)};
\draw [blue] plot [smooth,tension=1.2] coordinates {(1.5,0) (1,0.5) (1.5,1)};
\draw [purple] plot [smooth,tension=1.2] coordinates {(1.5,0) (2.75,0.5) (1.5,1)};
\draw [red]  (1.5,1) -- (0.25,1.5) node[anchor=south]{\footnotesize$s$};
\draw [blue] (1.5,1) -- (1,1.5) node[anchor=south]{\footnotesize$t$};
\draw [purple] (1.5,1) -- (2.75,1.5);
\draw [red]  (1.5,0) -- (0.25,-0.5) node[anchor=north]{\footnotesize$s$};
\draw [blue] (1.5,0) -- (1,-0.5) node[anchor=north]{\footnotesize$t$};
\draw [purple] (1.5,0) -- (2.75,-0.5);
\draw  (1.75,1.3) node{\footnotesize$\cdots$};
\draw  (1.75,-0.3) node{\footnotesize$\cdots$};
\end{tikzpicture}
\;\;=\;\;
\begin{tikzpicture}[baseline=19pt,y=40pt]
\draw[dashed] (-0.5,-0.5) -- (3.5,-0.5);
\draw[dashed] (-0.5,1.5) -- (3.5,1.5);
\draw[red,dash pattern= on 4pt off 4pt] (1.5,0) -- (1.5,1);
\draw[blue,dash pattern= on 4pt off 4pt,dash phase=4pt] (1.5,0) -- (1.5,1);
\draw [red]  (1.5,1) -- (0.25,1.5) node[anchor=south]{\footnotesize$s$};
\draw [blue] (1.5,1) -- (1,1.5) node[anchor=south]{\footnotesize$t$};
\draw [purple] (1.5,1) -- (2.75,1.5);
\draw [red]  (1.5,0) -- (0.25,-0.5) node[anchor=north]{\footnotesize$s$};
\draw [blue] (1.5,0) -- (1,-0.5) node[anchor=north]{\footnotesize$t$};
\draw [purple] (1.5,0) -- (2.75,-0.5);
\draw  (1.75,1.3) node{\footnotesize$\cdots$};
\draw  (1.75,-0.3) node{\footnotesize$\cdots$};
\end{tikzpicture}
\;\;=\;\;
\begin{tikzpicture}[baseline=29pt,y=40pt]
\draw[dashed] (-0.5,0) -- (3.5,0);
\draw[dashed] (-0.5,1.5) -- (3.5,1.5);
\draw[red] (1.5,0.75) -- (0,0) node[anchor=north]{\footnotesize$s$};
\draw[blue] (1.5,0.75) -- (0.75,0) node[anchor=north]{\footnotesize$t$};
\draw[purple] (1.5,0.75) -- (3,0);
\draw  (1.75,0.25) node{\footnotesize$\cdots$};
\draw[red] (1.5,0.75) -- (0,1.5) node[anchor=south]{\footnotesize$s$};
\draw[blue] (1.5,0.75) -- (0.75,1.5) node[anchor=south]{\footnotesize$t$};
\draw[purple] (1.5,0.75) -- (3,1.5);
\draw  (1.75,1.2) node{\footnotesize$\cdots$};
\end{tikzpicture}
\end{equation*}

\vspace{-5pt}
\begin{equation*}
\begin{tikzpicture}[baseline=19pt,y=40pt]
\draw[dashed] (-0.5,-0.5) -- (3.5,-0.5);
\draw[dashed] (-0.5,1.5) -- (3.5,1.5);
\draw  (1.75,0.5) node{\footnotesize$\cdots$};
\draw [red] plot [smooth,tension=1.2] coordinates {(1.5,0) (0.25,0.5) (1.5,1)};
\draw [blue] plot [smooth,tension=1.2] coordinates {(1.5,0) (1,0.5) (1.5,1)};
\draw [purple] plot [smooth,tension=1.2] coordinates {(1.5,0) (2.75,0.5) (1.5,1)};
\draw [blue] (1.5,1) -- (0.25,1.5) node[anchor=south]{\footnotesize$t$};
\draw [red]  (1.5,1) -- (1,1.5) node[anchor=south]{\footnotesize$s$};
\draw [purple] (1.5,1) -- (2.75,1.5);
\draw [red]  (1.5,0) -- (0.25,-0.5) node[anchor=north]{\footnotesize$s$};
\draw [blue] (1.5,0) -- (1,-0.5) node[anchor=north]{\footnotesize$t$};
\draw [purple] (1.5,0) -- (2.75,-0.5);
\draw  (1.75,1.3) node{\footnotesize$\cdots$};
\draw  (1.75,-0.3) node{\footnotesize$\cdots$};
\end{tikzpicture}
\;\;=\;\;
\begin{tikzpicture}[baseline=19pt,y=40pt]
\draw[dashed] (-0.5,-0.5) -- (3.5,-0.5);
\draw[dashed] (-0.5,1.5) -- (3.5,1.5);
\draw[red,dash pattern= on 4pt off 4pt] (1.5,0) -- (1.5,1);
\draw[blue,dash pattern= on 4pt off 4pt,dash phase=4pt] (1.5,0) -- (1.5,1);
\draw [blue] (1.5,1) -- (0.25,1.5) node[anchor=south]{\footnotesize$t$};
\draw [red]  (1.5,1) -- (1,1.5) node[anchor=south]{\footnotesize$s$};
\draw [purple] (1.5,1) -- (2.75,1.5);
\draw [red]  (1.5,0) -- (0.25,-0.5) node[anchor=north]{\footnotesize$s$};
\draw [blue] (1.5,0) -- (1,-0.5) node[anchor=north]{\footnotesize$t$};
\draw [purple] (1.5,0) -- (2.75,-0.5);
\draw  (1.75,1.3) node{\footnotesize$\cdots$};
\draw  (1.75,-0.3) node{\footnotesize$\cdots$};
\end{tikzpicture}
\;\;=\;\;
\begin{tikzpicture}[baseline=29pt,y=40pt]
\draw[dashed] (-0.5,0) -- (3.5,0);
\draw[dashed] (-0.5,1.5) -- (3.5,1.5);
\draw[red] (1.5,0.75) -- (0,0) node[anchor=north]{\footnotesize$s$};
\draw[blue] (1.5,0.75) -- (0.75,0) node[anchor=north]{\footnotesize$t$};
\draw[purple] (1.5,0.75) -- (3,0);
\draw  (1.75,0.25) node{\footnotesize$\cdots$};
\draw[blue] (1.5,0.75) -- (0 ,1.5) node[anchor=south]{\footnotesize$t$};
\draw[red]  (1.5,0.75) -- (0.75,1.5) node[anchor=south]{\footnotesize$s$};
\draw[purple] (1.5,0.75) -- (3,1.5);
\draw  (1.75,1.2) node{\footnotesize$\cdots$};
\end{tikzpicture}
\end{equation*}

\subsection{Two-color dot contraction}\label{Two-color dot contraction} As an example for $m_{s,t}=3$, let us prove the following relation

\vspace{-5pt}
\begin{equation*}
\begin{tikzpicture}[baseline=25pt]
\draw[dashed] (0,0) -- (3,0);
\draw[dashed] (0,2) -- (3,2);
\draw[red] (1.5,1) -- (0.5,0) node[anchor=north]{\footnotesize$s$};
\draw[blue] (1.5,1) -- (1.5,0) node[anchor=north]{\footnotesize$t$};
\draw[red] (1.5,1) -- (2.5,0) node[anchor=north]{\footnotesize$s$};
\draw[blue] (1.5,1) -- (0.5,2) node[anchor=south]{\footnotesize$t$};
\draw[red] (1.5,1) -- (1.5,2) node[anchor=south]{\footnotesize$s$};
\draw[blue] (1.5,1) -- (2.2,1.7);
\draw[blue,fill=blue] (2.2,1.7) circle(0.06);
\end{tikzpicture}
\qquad
=
\qquad
\begin{tikzpicture}[baseline=25pt]
\draw[dashed] (0,0) -- (3,0);
\draw[dashed] (0,2) -- (3,2);
\draw[red] (1.5,1) -- (0.5,0) node[anchor=north]{\footnotesize$s$};
\draw[blue] (1.5,1) -- (1.5,0) node[anchor=north]{\footnotesize$t$};
\draw[red] (1.5,1) -- (2.5,0) node[anchor=north]{\footnotesize$s$};
\draw[red] (1.5,1) -- (0.8,1.7);
\draw[blue] (1.5,1) -- (1.5,2) node[anchor=south]{\footnotesize$t$};
\draw[red] (1.5,1) -- (2.5,2) node[anchor=south]{\footnotesize$s$};
\draw[red,fill=red] (0.8,1.7) circle(0.06);
\end{tikzpicture}
\end{equation*}
\vspace{1pt}
\noindent
Indeed, the left-hand side and the right-hand side are equal to
$$
i_{(t,s),t,\emptyset}^\star\eta_{t,s}^\star\eta_{s,t\star}=(\eta_{t,s}i_{(t,s),t,\emptyset})^\star\eta_{s,t\star},\qquad
i_{\emptyset,s,(t,s)}^\star\eta_{s,t}^\star\eta_{s,t\star}=(\eta_{s,t}i_{\emptyset,s,(t,s)})^\star\eta_{s,t\star},
$$
respectively. Hence it suffices to prove that $\eta_{t,s}i_{(t,s),t,\emptyset}=\eta_{s,t}i_{\emptyset,s,(t,s)}$.
This follows from the following calculation:
$$
\begin{tikzcd}
{}&{}[g:g':1\rr\arrow[mapsto]{dr}{\eta_{t,s}}&\\
{}[g:g'\rr\arrow[mapsto]{ur}{i_{(t,s),t,\emptyset}}\arrow[mapsto,swap]{dr}{i_{\emptyset,s,(t,s)}}&{}&gg'K\\
{}&{}[1:g:g'\rr\arrow[mapsto,swap]{ur}{\eta_{s,t}}& \end{tikzcd}
$$
For the other values of $m_{s,t}$ the proofs are similar.

\subsection{Horizontal extensions of two-color morphisms}\label{Horizontal extensions of two-color morphisms}
Finally, it remains to extend the morphisms $\eta_{s,t}^\star$ and $\eta_{s,t\star}$ horizontally.
It can be done similarly to Section~\ref{Horizontal_extensions}.
First, let $p=(p_1,\ldots,p_n)$ be a sequence of simple reflections. Then we define the map
$$
\eta_{p,s,t}:\BS(p_1,\ldots,p_n,s,t,\ldots)\to\BS(p_1,\ldots,p_n,\{s,t\})
$$
by $[g_1:\cdots:g_n:g'_1:\cdots:g'_{m_{s,t}}\rr\mapsto[g_1:\cdots:g_n:g'_1\cdots g'_{m_{s,t}}\rr$.
We have the following commutative diagram
$$
\begin{tikzcd}
H(p)\otimes_\R H_K^\bullet(X_{s,t},\k)\arrow{d}[swap]{\theta_{p,s,t}}{\wr}\arrow{r}{\id\otimes\eta_{s,t}^\star}&H(p)\otimes_\R H(s,t,\ldots)\arrow{d}{\theta_{p,[s,t]}}[swap]{\wr}\\
H(p_1,\ldots,p_n,\{s,t\})\arrow{r}{\eta_{p,s,t}^\star}&H(p_1,\ldots,p_n,s,t,\ldots)
\end{tikzcd}
$$
where $\theta_{p,s,t}$ corresponds to
$$
\phi_{p,s,t}:\BS(p_1,\ldots,p_n,\{s,t\})\,{}_K\!\!\times E\to(\BS(p)\,{}_K\!\!\times E)\times(X_{s,t}\,{}_K\!\!\times E)
$$
as in Section~\ref{The_embedding} for the following set of data:
$$
X=G_{p_1}\mathop{\times}\limits_K{G_{p_2}}\mathop{\times}\limits_K\cdots\mathop{\times}\limits_KG_{p_n},\quad Y=X_{s,t},\quad L=R=P=Q=K,
$$
and $\alpha:X\to G$ defined by $[g_1:\cdots:g_n]\mapsto g_1\cdots g_n$.

Now let $q=(q_1,\ldots,q_m)$ be another sequence of simple reflections. We define the map
$$
\eta_{p,s,t,q}:\BS(p_1,\ldots,p_n,s,t,\ldots,q_1,\ldots,q_m)\to\BS(p_1,\ldots,p_n,\{s,t\},q_1,\ldots,q_m)
$$
by $[g_1:\cdots:g_n:g'_1:\cdots:g'_{m_{s,t}}:g''_1:\cdots:g''_m\rr\mapsto[g_1:\cdots:g_n:g'_1\cdots g'_{m_{s,t}}:g''_1:\cdots:g''_m\rr$.
We have the following commutative diagram
$$
\begin{tikzcd}[column sep=35pt]
H(p_1,\ldots,p_n,\{s,t\})\otimes_\R H(q)\arrow{d}[swap]{\theta_{p,s,t,q}}{\wr}\arrow{r}{\eta_{p,s,t}^\star\otimes\id}&H(p_1,\ldots,p_n,s,t,\ldots)\otimes H(q)\arrow{d}{\theta_{p[s,t],q}}[swap]{\wr}\\
H(p_1,\ldots,p_n,\{s,t\},q_1,\ldots,q_m)\arrow{r}{\eta_{p,s,t,q}^\star}&H(p_1,\ldots,p_n,s,t,\ldots,q_1,\ldots,q_m)
\end{tikzcd}
$$
where $\theta_{p,s,t,q}$ is defined similarly to $\theta_{p,s,t}$. We leave it to the reader
to write down similar diagrams for push-forwards, to draw the correspondind diagrams and to compose them
(see, the examples of diagrams in the introduction).

\def\sep{\\[-5pt]}

\bigskip

\noindent
Financial University under the Government of the Russian Federation, \\ Leningradsky Prospekt 49, Moscow, 125993, Russia,
shchigolev\_vladimir@yahoo.com

\end{document}